\newcommand{\notabene}[1]{\footnote{\magenta{#1}}} %Some useful notes which are not to appear in the pdf
\renewcommand{\notabene}[1]{}
\begin{document}
	
	\title{Pythagoras numbers of orders in biquadratic fields}
	
	\author{Jakub Kr\'asensk\'y$^{1,\ast}$}
	\author{Martin Ra\v{s}ka$^1$}
	\author{Ester Sgallov\'a$^1$}

	\address{$^1$Charles University, Faculty of Mathematics and Physics, Department of Algebra,\newline
		Sokolovsk\'{a}~83, 18600 Praha 8, Czech Republic}
	
	\email{ krasensky@seznam.cz $^{\ast}$, raska.martin@gmail.com, ester.sgallova@centrum.cz}
	\thanks{$^{\ast}$corresponding author}

	%\keywords{sum of squares, Pythagoras number, biquadratic number field, ring of integers}
	
	%\thanks{J.\ K.\ and E.\ S.\ were supported by project PRIMUS/20/SCI/002 from Charles University. J.\ K.\ was further partially supported by Czech Science Foundation GA\v{C}R, grant 21-00420M, by projects UNCE/SCI/022 and GA UK No.\ 742120 from Charles University, and by SVV-2020-260589.}

	\begin{abstract}
		We examine the Pythagoras number $\P(\O_K)$ of the ring of integers $\O_K$ in a totally real biquadratic number field $K$. We show that the known upper bound $7$ is attained in a large and natural infinite family of such fields. In contrast, for almost all fields $\BQ{5}{s}$ we prove $\P(\O_K)=5$. Further we show that $5$ is a lower bound for all but seven fields $K$ and $6$ is a lower bound in an asymptotic sense.
		\vspace{0.8em}
		
		\noindent \textsc{Keywords.} Sum of squares, Pythagoras number, biquadratic number field, ring of integers
	\end{abstract}
	
	\setcounter{tocdepth}{2}
	\renewcommand{\contentsname}{}%To make the first page less crowded, we remove the title of the table of contents
	\maketitle 
	\thispagestyle{empty}
	\vspace{-4.5em} %To make the ToC fit to one page.
	\tableofcontents
	\clearpage
	
	\section{Introduction}
	
	The study of sums of squares has a long tradition. Already Diophantus was aware that every natural number is a sum of four squares, a fact proven by Lagrange in 1770. Maa{\ss} \cite{Ma} proved that for any totally positive integer of $\Q(\!\sqrt5)$, three squares are sufficient. Siegel \cite{Si} later showed that this is a rather exceptional property: $\Q$ and $\Q(\!\sqrt5)$ are the only totally real number fields where each totally positive algebraic integer is a sum of integral squares.
	
	To remedy this fact, it is customary to consider only those elements of a commutative ring $R$ which can be written as a sum of squares; these elements form a set often denoted by $\sum R^2$. For $\alpha \in \sum R^2$ we define its \emph{length} $\ell(\alpha)$ as the minimal number of squares by which we can represent $\alpha$. For example, $\ell(7)=4$ in $\Z$. The \emph{Pythagoras number} of $R$ is then
	\[
	\P(R) = \sup\{\ell(\alpha) : \alpha \in \textstyle{\sum} R^2 \},
	\]
	i.e.\ the smallest number $n$ such that any sum of squares is in fact a sum of $n$ squares. In this terminology, $\P(\Z)=4$ and $\P(\O_{\Q(\!\sqrt5)})=3$. The Pythagoras number of an order in a number field $K$ is always finite; in fact, if the number field $K$ is not totally real, then $\P(\O)\leq 5$ for any order $\O$, and $\P(\O_K)\leq 4$ for the maximal one (see \cite{Pf}). In the most difficult case of orders of totally real number fields, little is known. Scharlau \cite{Sch} has shown that the Pythagoras number of such an order can be arbitrarily large, but conjectured that it can be bounded by a function depending only on the degree of $K$. This was recently proven by Kala and Yatsyna \cite{KY}; for small degrees $2 \leq d \leq 5$, this bound is $d+3$. (See also Subsection \ref{ss:sqrt5upper} where we prove a stronger version of their result.)
	
	\subsection{Historical overview}
	
	The case of orders in real quadratic fields was completely solved, the main contribution being made by Peters \cite{Pe}. Since we exploit this result repeatedly and since some parts of the proof are either folklore or difficult to find, we collect them separately in Section \ref{se:quadratic}. 
	
	Besides providing an upper bound for the Pythagoras number, Peters also fully characterised which numbers are sums of squares in the orders $\O$ of real quadratic fields -- in other words, he determined the set $\sum \O^2$. It is well known (see e.g.\ \cite{KY}, Lemma 6) that the only \emph{local} conditions for a number $\alpha\in\O_K$ to be a sum of integral squares is that it is totally positive and it is a square modulo $2\O_K$; it seems to be folklore that the same holds for non-maximal orders as well. In a maximal order, every number satisfying these necessary conditions is in fact \emph{locally} a sum of four squares: In non-dyadic completions, three squares suffice, while in a dyadic completion $K_{\mathfrak{p}}$, three squares suffice for each square-mod-2 if and only if $[K_{\mathfrak{p}}:\Q_2]$ is even (\cite{Sch2}, Kapitel 0, Lemma 1). However, these local conditions usually do not suffice to characterise the set $\sum \O^2$: The only real quadratic orders where every totally positive square-mod-2 is \enquote{globally} a sum of squares, are $\Z[\sqrt2]$, $\Z[\sqrt3]$ and $\Z\bigl[\frac{1+\sqrt5}{2}\bigr]$; trivially, the same holds for $\Z$. In Scharlau's dissertation \cite{Sch2}, it is proved that only two other totally real orders of degree at most $4$ share this property, namely the maximal orders of $\Q\bigl(\!\sqrt{\frac{5+\sqrt5}{2}}\bigr)$ and of $\BQ25$. As a corollary of our Theorem \ref{th:mainSqrt5} we shall see that in these two orders, five squares suffice, which is an improvement of the upper bound $7$ provided for quartic orders by \cite{KY} (however, in the latter order we suspect that the correct value is $3$, see Conjecture \ref{co:conjecture}).
	
	This failure of local-global principle is connected with the fact that in totally real fields, the genus of the quadratic form $x^2+y^2+z^2$ rarely consists only of one equivalence class \cite{Pe2}; therefore, it is difficult to prove any global results about sums of squares in these cases (already proving the tight upper bound for $\P(\Z[\sqrt3])$ was tricky, see the discussion below Theorem \ref{th:quadratic}). Nevertheless, there are some recent results for totally real orders. Tinková \cite{Ti} examined the Pythagoras number of orders in simplest cubic fields; one of the present authors \cite{KrCubic} exhibited an exceptional case with very different behaviour. Collinet \cite{Co} and Kala and Yatsyna \cite{KY2} investigated rings of the form $\O_K\bigl[\frac{1}{n}\bigr]$ where $K$ is totally real. For real quadratic orders, B.\ M.\ Kim examined sums of non-vanishing squares with J.\ Y.\ Kim in \cite{KK}, and sums of distinct squares with P.-S.\ Park in \cite{KP}. The papers \cite{KY, KY3} also show an application of the Pythagoras number for the construction of a universal quadratic form. A generalisation of the Pythagoras number, the so-called $g$-invariants, are the subject of recent study as well, see e.g.\ papers by Icaza and Chan \cite{CI, Ic}, Sasaki \cite{SaEng} or Yatsyna and one of the present authors \cite{KrY}. We shall need them in Subsection \ref{ss:sqrt5upper} and provide a definition there.
	
	Studying Pythagoras numbers of fields is easier. For number fields, the local-global principle yields $\P(\,\cdot\,)\leq 4$, so more complicated fields are studied. The case of non-formally real fields is also well understood \cite{Pf}, while the theory for formally real fields is rich -- by Hoffmann \cite{Ho}, every $n\in\N$ appears as the Pythagoras number of such a field -- and still a vivid research area. From many recent papers we mention those of Becher, Grimm, Van Geel, Leep, Hu, Tikhonov and Yanchevskii \cite{BGV, BL, BV, Hu, TVY}.
	%HKK: From some norm onwards $5$ squares suffice.

	\subsection{Our results}
	
	This paper examines the Pythagoras numbers of orders $\O$ in totally real biquadratic fields, i.e.\ in fields of the form $\BQ{p}{q}$ where $p,q>1$ are distinct square-free integers. Since their degree is $4$, the above-mentioned result by Kala and Yatsyna gives $\P(\O) \leq 7$. This, together with the knowledge of the Pythagoras numbers of orders in quadratic fields, were essentially the only tools at our disposal. Our treatment is mostly elementary (the only exception is Subsection \ref{ss:sqrt5upper} where we use some local-global considerations) but some tricks are required to obtain a variety of well-rounded results:
	
	\begin{theorem} \label{th:main7infinitely}
		There exist infinitely many totally real biquadratic fields $K$ with $\P(\O_K)=7$.
	\end{theorem}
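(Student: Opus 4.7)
The upper bound $\P(\O_K)\leq 7$ for every totally real biquadratic $K$ is already known from \cite{KY}, so the task reduces to exhibiting, in infinitely many such fields, a witness $\alpha\in\sum\O_K^2$ of length exactly $7$. My plan is to parametrise the family as $K_m=\Q(\sqrt{p},\sqrt{q_m})$ with the radicand $p$ fixed and small, and $q_m$ ranging over an arithmetic progression of square-free integers chosen so that $\O_{K_m}$ and, crucially, the residue ring $\O_{K_m}/2\O_{K_m}$ admit a description independent of $m$. The fields $K_m$ are then automatically pairwise distinct, so infinitude is free as soon as the witness argument goes through once uniformly.

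Within each $K_m$ I would hand-craft an element $\alpha_m\in\O_{K_m}$ that is totally positive and a square modulo $2\O_{K_m}$; these two conditions together with the local-global dichotomy recalled in the introduction place $\alpha_m\in\sum\O_{K_m}^2$. The upper bound $\ell(\alpha_m)\leq 7$ I would establish by writing down one explicit decomposition into seven squares, so the entire work is in the matching lower bound $\ell(\alpha_m)\geq 7$.

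To rule out a hypothetical representation $\alpha_m=\beta_1^2+\cdots+\beta_6^2$ I would use two finiteness reductions. First, the conjugates of any $\beta^2$ are non-negative, and for $\beta\neq 0$ one has $|N(\beta)|\geq 1$, so the AM-GM inequality yields $\mathrm{Tr}(\beta^2)\geq 4$. Choosing $\alpha_m$ with $\mathrm{Tr}(\alpha_m)$ only slightly above $24$ therefore forces each $\beta_i$ to have all four conjugates of small absolute value, confining $\beta_i$ to a finite explicit pool $S_m\subset\O_{K_m}$. Second, the image of $\alpha_m$ in the finite ring $\O_{K_m}/2\O_{K_m}$ must be expressible as a sum of six squares there, and the set of squares in this quotient is small; this cuts the number of admissible unordered $6$-tuples of residue classes drastically. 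The intersection of the size constraint with the mod-$2$ constraint should leave only a manageable list of candidate tuples, which I would eliminate by direct verification.

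The main obstacle is the uniformity of this finite check. I need the family $\{q_m\}$ arranged so that (a) the trace of $\alpha_m$ grows only mildly (or not at all) with $m$, so $S_m$ does not explode and in fact consists of elements whose expression in a natural $\Z$-basis of $\O_{K_m}$ is independent of $m$; and (b) the $2$-adic behaviour, i.e.\ the structure of $\O_{K_m}/2\O_{K_m}$ and the residue of $\alpha_m$ there, stabilises across the family. Arranging (a) and (b) simultaneously will dictate both the shape of $q_m$ (likely a congruence condition modulo a small modulus) and the shape of $\alpha_m$ (built from a fixed $\Q(\sqrt{p})$-part plus a unit-like correction from the $\sqrt{q_m}$-direction). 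Once these are set up, the lower-bound exclusion reduces to one finite computation valid for all $m$ in the progression, and the theorem follows.
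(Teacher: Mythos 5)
Your overall framing (the upper bound $7$ from \cite{KY} is known, so one only needs infinitely many fields with an explicit witness of length $7$) matches the paper, but the mechanism you propose for the lower bound cannot work. The linchpin of your plan is to keep $\Tr(\alpha_m)$ bounded (``slightly above $24$'') so that every candidate summand of a hypothetical six-square representation lies in a finite pool $S_m$ independent of $m$. The trouble is that the same trace bound applies to the seven-square representation that must exist if $\ell(\alpha_m)=7$: writing $x=x_0+x_1\sqrt{p}+x_2\sqrt{q_m}+x_3\sqrt{r_m}$ with $x_i\in\frac14\Z$, any $x\in\O_{K_m}$ not lying in $\Q(\sqrt{p})$ satisfies $\Tr(x^2)\geq q_m/(4p)\to\infty$, so a sum of squares of bounded trace can only use summands from $\O_{K_m}\cap\Q(\sqrt{p})=\O_{\Q(\sqrt{p})}$. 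But then $\alpha_m$ is a sum of squares inside the quadratic order $\O_{\Q(\sqrt{p})}$, where by Peters' theorem (Theorem \ref{th:quadratic}) five squares always suffice, so $\ell(\alpha_m)\leq 5$ --- never $7$. In other words, any genuine length-$7$ witness must involve $\sqrt{q_m}$ essentially, which forces $\Tr(\alpha_m)$ to grow at least linearly in $q_m$ and with it the pool of admissible summands; the uniform finite check you are counting on evaporates. (A smaller point: a length-$7$ element is a sum of seven nonzero squares, each of trace at least $4$ by AM--GM and $|N(\beta)|\geq1$, so its trace is at least $28$, not $24$.)

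The paper's actual proof (Theorem \ref{th:(B1)P=7}) goes the opposite way: for coprime square-free $p\equiv 2$, $q\equiv 3\pmod 4$ with $p,q\geq 10$ it takes the witness $\alpha=7+(1+\sqrt{p})^2+(1+\sqrt{q})^2+\bigl(\frac{\sqrt{p}+\sqrt{pq}}{2}\bigr)^2$, whose trace grows with $p$ and $q$. The lower bound is obtained not by shrinking the pool of summands to a constant-size set but by comparing coefficients in the basis $(1,\sqrt{p},\sqrt{q},\sqrt{pq})$: the largeness of $p,q$ bounds $\sum b_i^2$, $\sum c_i^2$, $\sum d_i^2$, and parity and sign arguments then force the summands $(1+\sqrt{p})^2$, $(1+\sqrt{q})^2$ and $\bigl(\frac{\sqrt{p}+\sqrt{pq}}{2}\bigr)^2$ to occur, leaving $7=\sum a_i^2$, which needs four rational squares. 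If you want to rescue your idea, the quantity to control is not the trace itself but the gap between $\Tr(\alpha_m)$ and the unavoidable contribution of the summands outside $\Q(\sqrt{p})$ --- which is exactly what the paper's coefficient bookkeeping does.
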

	\begin{proof}
		Theorem \ref{th:(B1)P=7} provides a large and natural infinite family of such fields: It suffices to pick two coprime square-free numbers $p \equiv 2$, $q \equiv 3  \pmod{4}$ greater than $7$, and consider $K = \BQ{p}{q}$.
	\end{proof}
	
	The assumption that $K$ is generated by roots of coprime integers is a very common one: For example, multiquadratic fields generated by roots of coprime integers were used both by Scharlau in \cite{Sch} and by Kala and Svoboda in \cite{KS}, in the former case to exhibit orders of arbitrarily large Pythagoras number, in the latter to construct maximal orders without a universal quadratic form of a given rank. Therefore investigating the coprime case was important, and besides the above theorem it also yielded Proposition \ref{pr:coprime}.
	
	Nevertheless, in the present paper we do not restrict to the coprime case, and prove surprisingly strong lower bounds which are valid for each totally real biquadratic field. In particular, the following two statements show that the Pythagoras number of $\O_K$ in most totally real biquadratic fields is not much smaller than the known upper bound $7$:
	
	\begin{theorem} \label{th:main2parts}
		Let $K$ be a totally real biquadratic field. Then:
		\begin{enumerate}
			\item $\P(\O_K)\geq 5$ for all but at most seven cases. \label{it:main(1)}
			\item Fix a square-free positive $n > 7$. Then $\P(\O_K) \geq 6$ for all but finitely many totally real biquadratic fields $K \ni \sqrt n$.
		\end{enumerate}
	\end{theorem}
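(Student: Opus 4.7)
The plan for both parts is to exhibit, for all but finitely many fields $K$ in each class, an explicit element $\alpha\in\sum\O_K^2$ whose length can be bounded from below by a trace-based enumeration argument.

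For part (1), I would select $\alpha\in\O_K$ small, totally positive, and a square modulo $2\O_K$, so that the necessary local conditions recalled in the introduction are met. Natural candidates are small rational integers or short $\Z$-linear combinations of $1,\sqrt p,\sqrt q,\sqrt{pq}$. Assume for contradiction $\alpha=\gamma_1^2+\gamma_2^2+\gamma_3^2+\gamma_4^2$ in $\O_K$. Then $\operatorname{Tr}(\gamma_i^2)\leq\operatorname{Tr}(\alpha)$ for each $i$, and since $\gamma\mapsto\operatorname{Tr}(\gamma^2)$ is a positive-definite integral quadratic form on $\O_K$, each $\gamma_i$ lies in a finite, explicit list. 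Expanding $\gamma_i$ in a fixed integral basis, each basis coefficient is of order at most $\sqrt{\operatorname{Tr}(\alpha)/\max(1,p,q,pq)}$; as $p$ or $q$ grows, only the ``$1$-coefficient'' of each $\gamma_i$ can be nonzero, reducing the question to representability of $\alpha$ as a sum of four rational squares -- which a judicious choice of $\alpha$ precludes. The seven exceptions correspond to the smallest biquadratic fields, where the enumeration leaves too much room, and these are handled by direct computation.

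For part (2), fix $n>7$ square-free. By the classification of $\sum\O^2$ for real quadratic orders (Peters' theorem, summarised in Section~\ref{se:quadratic}), pick $\beta\in\O_{\Q(\sqrt n)}$ with $\ell_{\O_{\Q(\sqrt n)}}(\beta)\geq 5$; the assumption $n>7$ excludes the few orders where every totally positive square-mod-$2$ is a short sum of squares. Set $\alpha:=\beta+(1+\sqrt m)^2\in\O_K$, so $\alpha\in\sum\O_K^2$ and $\ell(\alpha)\leq \ell_{\O_{\Q(\sqrt n)}}(\beta)+1$. Assume for contradiction $\alpha=\sum_{i=1}^{5}\gamma_i^2$ with $\gamma_i=x_i+y_i\sqrt m$, $x_i,y_i\in\Q(\sqrt n)$. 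Matching the $1$- and $\sqrt m$-components yields
\[
\sum_i(x_i^2+m y_i^2)=\beta+1+m,\qquad 2\sum_i x_i y_i=2.
\]
Taking $\operatorname{Tr}_{\Q(\sqrt n)/\Q}$ of the first equation gives
\[
m\sum_i\operatorname{Tr}(y_i^2)\leq\operatorname{Tr}(\beta)+2(1+m).
\]
Since the smallest positive value of $\operatorname{Tr}(y^2)$ over the lattice in which the $y_i$ lie is a positive constant $c_n$ depending only on $n$, for $m$ larger than some $M(n,\beta)$ one can force at most one $y_i$ to be nonzero, equal to $\pm1$, with the matching $x_i=\pm 1$. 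Then $\gamma_i^2=(1\pm\sqrt m)^2$, and comparing with $\alpha$ gives $\sum_{j\neq i}x_j^2=\beta$ in $\O_{\Q(\sqrt n)}$ -- a four-square representation of $\beta$, contradicting $\ell_{\O_{\Q(\sqrt n)}}(\beta)\geq 5$. Hence $\ell_{\O_K}(\alpha)\geq 6$ for $m$ sufficiently large.

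The main obstacle is a case analysis of the integral basis of $\O_K$: for certain residues of $n,m$ modulo $4$, $\O_K$ contains ``half-integer'' elements such as $\tfrac{1}{2}(1+\sqrt n+\sqrt m+\sqrt{nm})$, so the $y_i$ need not lie in $\O_{\Q(\sqrt n)}$ but in a slightly finer lattice, potentially shrinking $c_n$ and permitting more configurations of $(y_i)$ in the enumeration above. Verifying the argument in full generality therefore demands a careful case-by-case check; an auxiliary obstacle is ensuring, via Peters' classification, that $\beta$ with $\ell_{\O_{\Q(\sqrt n)}}(\beta)\geq 5$ exists for every square-free $n>7$.
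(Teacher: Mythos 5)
Your plan --- choose a ``small'' $\alpha$, show that for large $p,q$ every candidate summand is forced to be a rational integer, and conclude because a judicious $\alpha$ is not a sum of four rational squares --- is self-defeating. If every summand is a rational integer then $\alpha$ itself is a rational integer, and every positive rational integer \emph{is} a sum of four rational squares by Lagrange; no choice of $\alpha\in\Z$ can have length $\geq 5$. To get length $\geq 5$ you must take $\alpha\notin\Q$, but then $\alpha$ has a nonzero coefficient at some $\sqrt{\cdot}$, which forces at least one summand whose trace is of order at least $m$; so $\alpha$ cannot be ``small'' uniformly in the field, and the enumeration does not collapse to $\Z$. The paper's actual witness is $\alpha_0=7+(1+\sqrt{m})^2$ (or $7+\bigl(\frac{1+\sqrt{m}}{2}\bigr)^2$), whose trace grows with $m$ but is calibrated so that summands involving $\sqrt{s}$ or $\sqrt{t}$ are excluded by trace comparison, while its length-$5$ property inside $\Z[\sqrt{m}]$ comes from Peters (Theorem \ref{th:quadratic}). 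Even this fails for infinitely many fields: when $s=m+4,m+8,m+12$ that element is a sum of at most four squares via $\bigl(1+\frac{\sqrt{m}\pm\sqrt{s}}{2}\bigr)^2$ (Observation \ref{ob:m+4etcIsAProblem}), forcing the replacements $15+(1+\sqrt{m})^2$ and $28+(1+\sqrt{m})^2$ in Subsection \ref{ss:m+4,m+8,m+12}; and for fields containing $\sqrt2,\sqrt3,\sqrt5$ the quadratic subfield has Pythagoras number only $3$, so no length-$5$ element can be imported from it at all --- the paper needs bespoke five-term witnesses there and, for $\BQ{5}{s}$ with $s\not\equiv1\pmod4$, the separate asymptotic machinery of Proposition \ref{pr:sqrt5GeneralStrongAsymptotics}. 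Also, the seven exceptional fields are not cases where the result is rescued ``by direct computation'': they are exactly the fields where $\P(\O_K)<5$ is expected, and they are excluded from the statement, not verified.

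\textbf{On part (2).} Here your approach coincides with the paper's (Section \ref{se:P>=6}, and the strategy spelled out in Subsection \ref{ss:P>=6idea}): take $\beta$ of length $5$ in $\O_{\Q(\sqrt{n})}$ (which exists for every square-free $n>7$ by Theorem \ref{th:quadratic}), add a square $w^2\notin\Q(\sqrt{n})$, and force any five-square representation to use $w^2$ and otherwise stay inside $\Q(\sqrt{n})$, producing a forbidden four-square representation of $\beta$. The obstacle you flag at the end is, however, the actual content of the proof and is left unresolved: your fixed choice $w=1+\sqrt{m}$ is in general \emph{not} the square of minimal trace outside $\Q(\sqrt{n})$ --- depending on the integral basis and on the relative sizes of $s_0$ and $t_0$, elements such as $\frac{1+\sqrt{s}}{2}$, $\frac{\sqrt{m}+\sqrt{s}}{2}$ or $\frac{1+\sqrt{m}+\sqrt{s}+\sqrt{t}}{4}$ have strictly smaller trace, and then your bound $m\sum\Tr(y_i^2)\leq\Tr(\beta)+2(1+m)$ admits configurations with several half-integral $y_i$ even as $m\to\infty$, so the forcing to a single $y_i=\pm1$ fails. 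The paper handles this by splitting into twelve families and choosing a different $w$ of minimal trace in each (the list preceding Proposition \ref{pr:P>=6witnesses}, with a representative proof in Lemma \ref{le:oneOf12}); that case analysis cannot be skipped, only reorganised.
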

	\begin{remark}
		The seven fields probably satisfying $\P(\O_K)<5$ are $\BQ23$, $\BQ25$, $\BQ35$, $\BQ27$, $\BQ37$, $\BQ56$, $\BQ57$, see also Conjecture \ref{co:conjecture}.
	\end{remark}
	\begin{proof}
		To prove the first statement is the sole purpose of Section \ref{se:P>=5}; the proof of the second statement is to be found in Section \ref{se:P>=6}.
	\end{proof}
	
	The proofs of Theorems \ref{th:main7infinitely} and \ref{th:main2parts} are based on cleverly finding families of elements with length $7$ ($5$ or $6$, resp.). To handle all fields, it was necessary to divide them into several families and for each family to figure out the best approach. One difficulty is that most biquadratic fields are not generated by square roots of coprime integers, another was the necessity to handle fields containing square roots of $2, 3, 5, 6, 7$ or $13$. One type of fields containing $\sqrt5$ was particularly difficult and required a very different approach, see Subsection \ref{ss:strongAsymptotics}; the developed technique is applicable to other cases, possibly beyond biquadratic fields, and may deserve attention on its own. 
	
	For some computations, especially in Theorem \ref{th:main2parts} (\ref{it:main(1)}), we have written a computer program, starting from scratch. It was primarily used to find elements of needed length for some fields, typically edge cases of our propositions containing square roots of small numbers. On the other hand, it also proved to be useful for finding patterns and establishing conjectures about Pythagoras numbers, which we were usually either able to prove or we formulate them below in Conjecture \ref{co:conjecture}. The used algorithms are described in Subsection \ref{ss:algorithms}, their implementation in Python is available at \url{https://github.com/raskama/number-theory/tree/main/biquadratic}.
	
	\smallskip
	
	Going in the other direction, i.e.\ proving an upper bound, is generally much more difficult. Nevertheless, after generalising the method of Kala and Yatsyna from \cite{KY}, we were able to apply it for quartic fields containing $\sqrt5$ by using a result of Sasaki \cite{SaEng, SaJapan} on $2$-universal quadratic forms.
	
	\begin{theorem}  \label{th:mainSqrt5}
		Let $K$ be a quadratic extension of $\Q(\!\sqrt{5})$ and $\O$ any order in $K$ containing $\frac{1+\sqrt5}{2}$. Then $\P(\O)\leq 5$.
	\end{theorem}
	\begin{proof}
		We prove this as Theorem \ref{th:upperboundSqrt5}.
	\end{proof}
	
	Besides requiring the use of the so-called $g$-invariants, part of the difficulty in proving the above theorem is that the crucial Theorem \ref{th:sasaki} is only available in Sasaki's Japanese paper \cite{SaJapan}. Moreover, the proof contained therein is very brief. For this reason we include a full proof of this result as well.

	Note that by Theorem \ref{th:main2parts} (\ref{it:main(1)}), this improved upper bound is mostly optimal for maximal orders:
	
	\begin{corollary}
		Let $K$ be a totally real biquadratic field containing $\sqrt5$. Then $\P(\O_K)=5$ except possibly when $K = \BQ25$, $\BQ35$, $\BQ56$ or $\BQ57$.
	\end{corollary}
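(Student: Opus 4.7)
The corollary is going to be an immediate combination of the upper bound in Theorem \ref{th:mainSqrt5} with the lower bound in Theorem \ref{th:main2parts}(\ref{it:main(1)}), so the plan is essentially to check that the hypotheses of both results apply and to intersect the lists of exceptional fields.

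First I would verify the upper bound $\P(\O_K)\leq 5$. Any totally real biquadratic field containing $\sqrt5$ is of the form $K=\BQ{5}{d}$ for some square-free $d>1$ with $d\neq 5$, hence $K$ is a quadratic extension of $\Q(\sqrt 5)$. Since $\O_{\Q(\sqrt 5)}=\Z\bigl[\tfrac{1+\sqrt5}{2}\bigr]\subseteq\O_K$, the element $\tfrac{1+\sqrt5}{2}$ lies in $\O_K$. Thus Theorem \ref{th:mainSqrt5} applies directly with $\O=\O_K$ and gives $\P(\O_K)\leq 5$.

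For the lower bound I would invoke Theorem \ref{th:main2parts}(\ref{it:main(1)}): for every totally real biquadratic field $K$, $\P(\O_K)\geq 5$ except possibly for the seven fields listed in the remark, namely $\BQ23$, $\BQ25$, $\BQ35$, $\BQ27$, $\BQ37$, $\BQ56$, $\BQ57$. Among these only four contain $\sqrt5$, namely $\BQ25$, $\BQ35$, $\BQ56$, $\BQ57$. Consequently, for any totally real biquadratic $K\ni\sqrt5$ which is not one of these four fields, we obtain $\P(\O_K)\geq 5$, and combining this with the upper bound yields equality.

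There is really no serious obstacle: every non-trivial ingredient has been established in the quoted theorems, and the only bookkeeping is to observe that the four excluded fields are exactly those entries of the seven-element exceptional list which happen to contain $\sqrt5$. The only place where a careless reader might stumble is in checking that $\tfrac{1+\sqrt5}{2}\in\O_K$, but this is immediate from $\O_{\Q(\sqrt5)}\subseteq\O_K$, so even this step is routine.
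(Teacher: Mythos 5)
Your proposal is correct and matches the paper's own (implicit) derivation exactly: the corollary is obtained by combining the upper bound of Theorem \ref{th:mainSqrt5} (applicable since $\frac{1+\sqrt5}{2}\in\O_{\Q(\sqrt5)}\subseteq\O_K$) with the lower bound of Theorem \ref{th:main2parts}~(\ref{it:main(1)}), and the four excluded fields are precisely the members of the seven-element exceptional list that contain $\sqrt5$. Nothing is missing.
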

	
	The conclusion that there are infinitely many non-maximal orders with Pythagoras number at most $5$ is also not to be overlooked. One can put it in contrast with the following theorem, which is our main result about non-maximal orders, to see that their Pythagoras number is a rich topic. Although we have a complete proof of the theorem, we did not include it in this paper to shorten the text. It will be published separately.
	
	\begin{theorem} \label{th:mainNonmaximal}
		Let $K$ be a given totally real biquadratic field. There are infinitely many orders $\O \subset K$ satisfying $\P(\O)=7$.
		
		More generally, any totally real biquadratic order $\O_0$ contains an order $\O \subsetneq \O_0$ with $\P(\O)=7$.
	\end{theorem}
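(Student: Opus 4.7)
My plan is to realise the required suborders as the orders of conductor $f$, namely $\O_f := \Z + f\O_0$ for varying integers $f \geq 2$. Each $\O_f$ is a subring of $\O_0$ -- closure under multiplication is immediate from
\[
(a+f\delta)(a'+f\delta') = aa' + f\bigl(a\delta'+a'\delta+f\delta\delta'\bigr)
\]
-- is properly contained in $\O_0$ (no non-integer element of a $\Z$-basis of $\O_0$ lies in $\O_f$), and has $\Z$-module index $f^3$ in $\O_0$. In particular the $\O_f$ are pairwise distinct, so both statements of the theorem reduce to proving $\P(\O_f)=7$ for infinitely many $f$.

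The upper bound $\P(\O_f)\leq 7$ is the Kala--Yatsyna bound for degree-$4$ orders; the content is the matching lower bound. The key structural feature is that every $x\in\O_f$ decomposes as $x=a+f\delta$ with $a\in\Z$ and $\delta\in\O_0$, giving
\[
x^2 = a^2 + 2af\delta + f^2\delta^2.
\]
Any hypothetical representation $\beta=x_1^2+\cdots+x_k^2$ therefore produces a congruence ladder: modulo $f$ a constraint on $\sum_i a_i^2$, and modulo $f^2\O_0$ a further constraint on the mixed term $2f\sum_i a_i\delta_i$. The strategy is to pick $\beta=\beta_f$ so that no length-$\leq 6$ representation can satisfy all of these simultaneously.

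A natural candidate, inspired by Theorem \ref{th:(B1)P=7} and by Scharlau's classical technique \cite{Sch} for inflating Pythagoras numbers in conductor orders, is $\beta_f = \beta_0 + f^2\eta$, where $\beta_0\in\O_0$ is an explicit totally positive element modelled on the bad elements used in the proof of Theorem \ref{th:(B1)P=7}, and $\eta\in\O_0$ is a correction ensuring that $\beta_f$ is totally positive and actually lies in $\sum\O_f^2$ to begin with. Reducing a hypothetical length-$6$ representation successively modulo $f$ and modulo $f^2\O_0$ should extract tighter and tighter constraints on the $a_i$ and the $\delta_i$, eventually forcing a contradiction.

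The main obstacle is carrying out this length-$6$ exclusion uniformly across all biquadratic orders $\O_0$. The admissible shapes of $\beta_0$, $\eta$ and $f$ depend on the structure of $\O_0$: whether it is maximal, the residues modulo $4$ of the generating radicands of the ambient field, whether $\sqrt5\in K$, and the exceptional small-radicand cases singled out in our main theorems. The resulting case analysis, parallel to but strictly more intricate than the divisions in Sections \ref{se:P>=5} and \ref{se:P>=6}, is presumably what makes the argument long enough to warrant separate publication.
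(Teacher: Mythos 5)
Your reduction to the conductor orders $\O_f = \Z + f\O_0$ is sound as far as it goes: these are pairwise distinct proper suborders of $\O_0$, and the upper bound $\P(\O_f)\leq 7$ is indeed free from \cite{KY}. But the entire content of the theorem is the lower bound, and that is where the proposal stops being a proof and becomes a plan. You never specify $\beta_0$ or $\eta$, never verify that $\beta_f$ is totally positive and actually lies in $\sum\O_f^2$, and never carry out the exclusion of representations by six squares; the proposal itself concedes that this \enquote{length-$6$ exclusion} is the main obstacle and leaves it as something that \emph{should} work. Note also that no soft argument can substitute here: the Pythagoras number is not monotone under inclusion of orders (e.g.\ $\P(\Z[\sqrt5])=4$ while $\P\bigl(\Z\bigl[\frac{1+\sqrt5}{2}\bigr]\bigr)=3$, and Theorem \ref{th:mainSqrt5} gives infinitely many non-maximal orders with $\P(\O)\leq 5$ inside fields containing $\sqrt5$), so containment of $\O_f$ in, or its containing of, some order of known Pythagoras number yields nothing. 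The lower bound must be proved from scratch for a concrete element, and that step is missing.

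For comparison, the paper also does not prove this theorem in the present text -- it defers the full proof to the separate article \cite{Kr} -- but it commits to a concrete construction: the orders are $\Z + \Z\sqrt{S_0T_0} + \Z\sqrt{M_0T_0} + \Z\sqrt{M_0S_0}$ with $M_0,S_0,T_0 \geq 5$ and the pairwise products non-square, with the natural witness being the direct analogue of the element in Theorem \ref{th:(B1)P=7}, namely $7 + (1+\sqrt{S_0T_0})^2 + (1+\sqrt{M_0T_0})^2 + (1+\sqrt{M_0S_0})^2$. This is a genuinely different family from yours: $\Z + f\O_0$ retains a half- or quarter-integral basis element scaled by $f$ and is in general not a $\Z$-span of $1$ and three square roots, and the paper's point (made explicitly in the Introduction) is that the relevant condition is that the three generating radicands have pairwise common divisors greater than four -- not a conductor condition. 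If you want to salvage your route, the minimal next step is to write down one explicit $\beta_f$ for one explicit $\O_0$ and one $f$, verify membership in $\sum\O_f^2$, and run the mod-$f$ and mod-$f^2\O_0$ argument to an actual contradiction for six squares; until then the claim is unproved.
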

	\begin{proof}
		The full proof will be published separately in the article \cite{Kr}. Here we only announce that the orders in question are of the form $\Z + \Z\sqrt{S_0T_0} + \Z\sqrt{M_0T_0} + \Z\sqrt{M_0S_0}$ where $M_0,S_0,T_0 \geq 5$ are any numbers such that $M_0S_0$, $M_0T_0$ and $S_0T_0$ are not squares -- compare this with the notation $m_0, s_0, t_0$ introduced in Subsection \ref{ss:convention}.\notabene{The corresponding element of length seven is $7 + (1+\sqrt{S_0T_0})^2 + (1+\sqrt{M_0T_0})^2 + (1+\sqrt{M_0S_0})^2$.}
	\end{proof}
	
	It is interesting that while the proof of Theorem \ref{th:main7infinitely} worked with square roots of coprime integers, in the case of non-maximal orders we required the order to be generated by square roots of three integers where each two have a common divisor greater than four -- a quite weak condition which nevertheless excludes coprime numbers. This illustrates that while at the first glance the proofs of lower bounds on Pythagoras numbers may all seem the same, they in fact use a variety of ideas.

	Although our conclusions are not as complete as Peters' on quadratic fields (see Section \ref{se:quadratic}), they give a very good idea about the behaviour of Pythagoras numbers of orders in biquadratic fields and thus in totally real fields in general. Regarding maximal orders in biquadratic fields, all the listed results seem to point forwards the following unifying conjecture:\footnote{Based on a preprint of this paper, part (2) of Conjecture \ref{co:conjecture} was proved by He and Hu \cite{HH}.}
	
	\begin{conjecture} \label{co:conjecture}
		Let $K$ be a totally real biquadratic field.
		\begin{enumerate}
			\item If $K$ contains none of $\sqrt2$ and $\sqrt5$, then $\P(\O_K) \geq 6$ holds with finitely many exceptions.\label{it:con1}
			\item If $K$ contains $\sqrt2$ or $\sqrt5$, then $\P(\O_K) \leq 5$.\label{it:con2}
			\item The inequality $\P(\O_K)<5$ holds precisely for the following seven fields:\\ For $K = \BQ23, \BQ25, \BQ35$, where $\P(\O_K)=3$, and\\ for $K =\BQ27, \BQ37, \BQ56, \BQ57$, where $\P(\O_K)=4$. \label{it:con3}
		\end{enumerate}
	\end{conjecture}
	
	We collected evidence for this conjecture in Subsection \ref{ss:conjecture}. Here let us only recall that (\ref{it:con2}) is already proven for $\sqrt5$ in Theorem \ref{th:mainSqrt5}, and that the only missing part of (\ref{it:con3}) are upper bounds for the seven exceptional fields: All necessary lower bounds are contained in Theorem \ref{th:main2parts} (\ref{it:main(1)}) and Proposition \ref{pr:comp_examples}.
	
	Note that while the quadratic fields $F$ generated by $\sqrt2$, $\sqrt3$ and $\sqrt5$ all have $\P(\O_F)=3$ (see Theorem \ref{th:quadratic}), their quadratic extensions behave very differently -- compare the first two statements of the above Conjecture. A possible explanation is that in general, if $K/L$ is a quadratic extension, the value of $\P(\O_K)$ is not directly dependent on $\P(\O_L)$ but rather on the invariant $g_{\O_L}(2)$, see Subsection \ref{ss:sqrt5upper}.

	\subsection{Structure of the paper}
	
	The structure of this paper is as follows: Section \ref{se:prelims} introduces biquadratic fields, repeats their important properties and fixes some notation and important conventions. Section \ref{se:quadratic} concisely presents the known result characterising $\P(\O_F)$ for a quadratic field $F$. In Section \ref{se:lemmata} we collected auxiliary results which are needed throughout the text. Section \ref{se:coprime} deals with biquadratic fields generated by roots of coprime integers, its main but not only result being Theorem \ref{th:main7infinitely}. Section \ref{se:P>=5} contains most of the proof of the first part of Theorem \ref{th:main2parts}, with the hardest case postponed until the next section. Although Section \ref{se:sqrt5} focuses on fields containing $\sqrt5$, its methods for producing both lower and upper bounds are fairly general; in particular, as necessary ingredients, we introduce the $g$-invariants and use them to generalise the upper bound of \cite{KY}, and also provide a full proof of a result known by Sasaki \cite{SaJapan}. In Section \ref{se:P>=6} we proceed to prove the other half of Theorem \ref{th:main2parts} and also to discuss the evidence which supports Conjecture \ref{co:conjecture}. %Finally, Section \ref{se:nonmax} investigates non-maximal orders, among other results producing Theorem \ref{th:mainNonmaximal}.

	\section{Preliminaries} \label{se:prelims}
	In this section, we explain the used terminology and notation. First we recall a few basic notions from number theory. Then, in Subsection \ref{ss:biquadratic}, we introduce the totally real biquadratic fields, and in Subsection \ref{ss:convention} we fix a very important convention about the meaning of letters $m,s,t$ used \emph{throughout the paper}.
	
	\smallskip
	
	For a number field $K$, we use $\O_K$ to denote its ring of integers. An \emph{order} in $K$ is any subring $\O \subset \O_K$ which has $K$ as its field of fractions; equivalently, $\O$ is both a subring of $K$ and a $\Z$-module of rank $[K : \Q]$. We often call $\O_K$ the \emph{maximal order} in $K$. A number field is \emph{totally real} if the images of all its embeddings into $\C$ are contained in $\R$. For $\alpha \in K$, its \emph{trace} is $\Tr (\alpha)=\sum \sigma_i(\alpha)$, where $ \sigma_i $ runs over all embeddings of $K$ into $\C$. Purely for notational convenience, we will also use the \textit{absolute trace} $\atr$, which is the trace divided by the degree $[K : \Q]$.
	
	Let $K$ be totally real and $\alpha \in K$. If $\sigma_i(\alpha) > 0$ in all embeddings $\sigma_i$, then $\alpha$ is \emph{totally positive}, denoted by $\alpha \succ 0$. If $\alpha=0$ is allowed, we write $\alpha \succcurlyeq 0$ and call it \emph{totally nonnegative}; further, $\alpha \succcurlyeq \beta$ simply means $\alpha - \beta \succcurlyeq 0$. By summing the inequality over all embeddings, it follows that $\Tr \alpha \geq \Tr \beta$ and $\atr \alpha \geq \atr \beta$. It is clear from the definition that any square and thus also any sum of squares is totally nonnegative.
	
	The Pythagoras number $\P(\,\cdot\,)$ of a ring and the length $\ell(\,\cdot\,)$ of an element were already defined in the Introduction. We point out that if $\alpha = x_1^2 + \cdots + x_n^2$, then $\alpha \succcurlyeq x_i^2$ for every $i$ and thus $\atr \alpha \geq \atr x_i^2$.\notabene{A very interesting fact, which is nevertheless never used in this paper, is that the squares-mod-$2$ form a ring (Scharlau denotes them by $R^{(2)}$ in a ring $R$.}

	\subsection{Biquadratic fields} \label{ss:biquadratic}
	
	A \emph{(totally real) biquadratic field} is a number field $K=\BQ{p}{q}$ where $p,q>1$ are two distinct square-free integers. In this paper, we usually omit the words \enquote{totally real}. Let us list a few basic facts about such fields. Further details including the proof of the explicit form of $\O_K$ can be found in \cite{Wi}.
	
	Denote $r = \frac{pq}{\gcd(p,q)^2}$. The numbers $p,q,r$ are the unique square-free rational integers (not counting $1$) which are squares in $K$; one can freely interchange them in the sense that $K=\BQ{p}{q}=\BQ{p}{r}=\BQ{q}{r}$. The only subfields of $K$ are $\Q$, $\Q(\!\sqrt{p})$, $\Q(\!\sqrt{q})$ and $\Q(\!\sqrt{r})$.
	
	The degree $[K :\Q] = 4$, the most natural $\Q$-basis of $K$ being $(1,\sqrt{p},\sqrt{q},\sqrt{r})$. The field $K$ is a Galois extension of $\Q$, with the following $\Q$-automorphisms: If $\alpha = g_0+g_1\sqrt{p}+g_2 \sqrt{q} + g_3 \sqrt{r}$ with $g_i \in \Q$, then
	\[\begin{array}{lll}
		\sigma_1(\alpha)=g_0+g_1\sqrt{p}+g_2 \sqrt{q} + g_3 \sqrt{r},  \\
		\sigma_2(\alpha)=g_0-g_1\sqrt{p}+g_2 \sqrt{q} - g_3 \sqrt{r},  \\
		\sigma_3(\alpha)=g_0+g_1\sqrt{p}-g_2 \sqrt{q} - g_3 \sqrt{r},  \\
		\sigma_4(\alpha)=g_0-g_1\sqrt{p}-g_2 \sqrt{q} + g_3 \sqrt{r}.  \\
	\end{array}\]
	In particular, $\Tr \alpha = 4g_0$ and $\atr \alpha = g_0$, which is the reason why the latter is often more convenient to work with. Note that $\atr (g_0+g_1\sqrt{p}+g_2\sqrt{q}+g_3\sqrt{r})^2 = g_0^2 + pg_1^2+qg_2^2+rg_3^2$. We also see that such a field is indeed totally real.
	
	It is well known that the integral basis of $\O_{\Q(\!\sqrt{n})}$ depends on the value of $n$ modulo $4$. A similar statement holds for biquadratic fields: After possibly interchanging the roles of $p$, $q$ and $r$ (so that $p \equiv r \pmod4$), the field $K$ falls into one of the following five distinct types. For each type we present the appropriate integral basis.
	
	\[\begin{array}{lll}
		\text{(B1)}&\O_K=\Span_{\Z}\Bigl\{1, \sqrt{p}, \sqrt{q}, \frac{\sqrt{p}+\sqrt{r}}2\Bigr\} & \text{if } p\equiv 2,\ q\equiv 3\!\!\!\pmod{4},  \\
		\text{(B2)}&\O_K=\Span_{\Z}\Bigl\{1, \sqrt{p}, \frac{1+\sqrt{q}}2, \frac{\sqrt{p}+\sqrt{r}}{2} \Bigr\} & \text{if } p\equiv 2,\ q\equiv 1\!\!\!\pmod{4}, \\
		\text{(B3)}&\O_K=\Span_{\Z}\Bigl\{1, \sqrt{p}, \frac{1+\sqrt{q}}2, \frac{\sqrt{p}+\sqrt{r}}{2}\Bigr\} & \text{if }  p\equiv 3,\ q\equiv 1\!\!\!\pmod{4},\\
		\text{(B4a)}& \O_K=\Span_{\Z}\Bigl\{1, \frac{1+\sqrt{p}}2,\frac{1+\sqrt{q}}2, \frac{1+\sqrt{p}+\sqrt{q}+\sqrt{r}}{4}\Bigr\} & \text{if } p\equiv q\equiv 1,
		\gcd(p,q)\equiv1\!\!\!\pmod{4},  \\
		\text{(B4b)}& \O_K=\Span_{\Z}\Bigl\{1, \frac{1+\sqrt{p}}2,\frac{1+\sqrt{q}}2, \frac{1-\sqrt{p}+\sqrt{q}+\sqrt{r}}{4}\Bigr\} & \text{if } p\equiv q\equiv 1, \gcd(p,q)\equiv3\!\!\!\pmod{4}.  \\
	\end{array}\]
	
	We shall refer to this distinction often, speaking e.g.\ about \enquote{fields of type (B1)}. Note that the role of $p$ and $r$ is freely interchangeable and in the fields of type (B4), all three letters are interchangeable.
	
	We conclude this section by introducing another point of view on biquadratic fields. Usually, a biquadratic field is given by specifying two of the three values $p,q,r$. However, there is another, more symmetrical way of uniquely describing a biquadratic field: One can specify the values of $p_0, q_0$ and $r_0$, where $p_0=\gcd(q,r)$, $q_0=\gcd(p,r)$, $r_0=\gcd(p,q)$. These three numbers are pairwise coprime and square-free (but one of them is allowed to be $1$); any triple with these properties corresponds to a unique biquadratic field and vice versa. One easily sees that $p=q_0r_0$, $q=p_0r_0$ and $r=p_0q_0$.

	\subsection{An important convention} \label{ss:convention}
	In this subsection, we collect conventions used throughout this paper. Most of them are very natural; however, one of them could easily be overlooked and cause confusion, and is therefore highlighted by a frame. The same convention was already used in \cite{KTZ}.
	
	Unless otherwise stated, $K$ is a totally real biquadratic field, and any biquadratic field is understood to be totally real. If we write $\BQ{p}{q}$, we usually understand $p,q>1$, square-free and distinct, but in important statements, we say it explicitly. When $K=\BQ{p}{q}$, the letter $r$ automatically stands for $r = \frac{pq}{\gcd(p,q)^2}$. We also denote $p_0 = \gcd(q,r)$ etc., see the end of the previous subsection.
	
	In many statements, we use the letters $m,s,t$ rather than $p,q,r$ for the numbers whose square roots generate $K$. They have a precise meaning and are not interchangeable:
	
	\smallskip
	\begin{notation} \label{no:mst}
		\vspace{-0.8em}
		Let $K$ be a biquadratic field. Then the letters $m,s,t$ denote the three square-free rational integers which are squares in $K$, in the following order:
		\[
		1 < m < s < t.
		\]
		We also use $m_0 = \gcd(s,t)$, $s_0=\gcd(m,t)$ and $t_0=\gcd(m,s)$; the above inequality translates into $m_0>s_0>t_0\geq 1$.
		\vspace{-0.2em}
	\end{notation}
	\smallskip
	
	For example, if $K=\BQ23$, then $m=2$, $s=3$, $t=6$. The above convention is \emph{always} in use, so e.g.\ if we write $K = \BQ{7}{s}$, it automatically means (since $7=t$ is impossible) that $m=7 < s < t=7s$; in particular, $K$ is none of the fields $\BQ27$, $\BQ37$, $\BQ57$ and $\BQ67$ and $s$ can be any square-free number satisfying $7 \nmid s$, $s>7$.

	\section{Pythagoras numbers of orders in quadratic fields} \label{se:quadratic}
	
	As we noted in the Introduction, it is difficult to find references for some parts of the characterisation of $\P(\O)$ where $\O$ is an order in a real quadratic field. Therefore we collect them below:
	
	\begin{theorem}[Peters; Cohn and Pall; Dzewas; Kneser; Maa\ss] \label{th:quadratic}
		Let $\O$ be an order in a real quadratic number field. Then
		\[
		\P(\O) =
		\begin{cases}
			3 & \text{for $\O=\Z[\sqrt2]$, $\Z[\sqrt3]$ and $\Z\bigl[\frac{1+\sqrt5}{2}\bigr]$},\\
			4 & \text{for $\O=\Z[\sqrt6]$, $\Z[\sqrt7]$ and the non-maximal order $\Z[\sqrt5]$},\\
			5 & \text{otherwise.}
		\end{cases}
		\]
		The maximal length is attained for example by:
		\begin{itemize}
			\item Length $3$: $1 + \sqrt2^2+(1+\sqrt{2})^2$,\; $2 + (2+\sqrt3)^2$, $2 + \bigl(\frac{1+\sqrt5}{2}\bigr)^2$;\;
			\item Length $4$: $3+(1+\sqrt6)^2$,\; $3+(1+\sqrt7)^2$,\; $3 + (1+\sqrt5)^2$;
			\item Length $5$: $3+\bigl(\frac{1+\sqrt{13}}{2}\bigr)^2+\bigl(1+\frac{1+\sqrt{13}}{2}\bigr)^2$ in $\Z\bigl[\frac{1+\sqrt{13}}{2}\bigr]$; in all the remaining cases $7 + (1+f\sqrt{n})^2$ or $7 + \bigl(f\frac{1+\sqrt{n}}{2}\bigr)^2$ in  $\Z[f\sqrt{n}]$ and $\Z\bigl[f\frac{1+\sqrt{n}}{2}\bigr]$, respectively. Here $n$ is square-free and the integer $f$, called the conductor, can take any positive value.
		\end{itemize}  
	\end{theorem}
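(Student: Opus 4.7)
The statement is really a synthesis of results by several authors, so the plan is to assemble them in the right order: first establish the universal upper bound $\P(\O)\leq 5$, then the strict improvement to $4$ in the three orders listed, then the further improvement to $3$ in the three even-more-exceptional ones, and finally verify tightness by inspecting the displayed witnesses.

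The universal bound $\P(\O)\leq 5$ is Peters' main theorem; he in fact proves a stronger statement characterising $\sum \O^2$ completely, from which the bound on the Pythagoras number is immediate. For the improvement to $\P(\O)\leq 4$ in $\Z[\sqrt 6]$, $\Z[\sqrt 7]$ and the non-maximal order $\Z[\sqrt 5]$, the plan is to invoke the local-global principle sketched in the Introduction: a totally positive square-mod-$2$ element of such an order is locally a sum of four squares at every completion (in the dyadic completion this is forced by the degree over $\Q_2$ being even, and at the other places it is automatic), and over these three specific orders the quaternary form $x^2+y^2+z^2+w^2$ has one class per genus, upgrading the local conclusion to a global one. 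Establishing the one-class-per-genus property amounts to a finite and explicit computation.

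For the strongest bound $\P(\O)\leq 3$ in $\Z[\sqrt 2]$, $\Z[\sqrt 3]$ and $\Z[\omega]$ with $\omega=\frac{1+\sqrt 5}{2}$, the same strategy applies with the ternary form $x^2+y^2+z^2$: the local sufficiency of three squares for a square-mod-$2$ element at a dyadic prime of even degree was already mentioned in the Introduction, and the global upgrade relies on the ternary form having one class per genus over the order. For $\Z[\omega]$ this is the classical theorem of Maa\ss; for $\Z[\sqrt 2]$ it is due to Cohn--Pall and Dzewas; for $\Z[\sqrt 3]$ it is Peters' result, for which Kneser's genus-theoretic machinery is needed. I expect the $\Z[\sqrt 3]$ case to be the main obstacle: as the Introduction explicitly notes, the tight upper bound for $\P(\Z[\sqrt 3])$ proved historically tricky, the reason being that the ternary genus theory over $\Z[\sqrt 3]$ is considerably more delicate than over the other two orders.

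Finally, each lower bound is certified by checking that the displayed witness $\alpha$ cannot be written as a sum of fewer squares. Given a hypothetical short decomposition $\alpha=\sum_{i} x_i^2$, the absolute-trace inequality $\atr\alpha\geq \atr x_i^2$ constrains each summand to lie in an explicit finite list, so non-existence reduces to a finite enumeration. For the two parametric families in the last bullet of the theorem, a uniform congruence argument modulo a small rational prime (or modulo the conductor $f$) handles all values of $n$ and $f$ simultaneously, avoiding the need to redo the enumeration for each order individually.
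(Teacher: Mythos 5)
Your plan for the universal bound $\P(\O)\leq 5$ (Peters) and for the lower bounds (trace-bounded finite enumeration of possible summands, carried out uniformly in $f$ and $n$ for the parametric families, as in Observation \ref{ob:betterQuadratic}) matches what the paper does. The two upgrade steps, however, contain a genuine error.

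For $\Z[\sqrt6]$, $\Z[\sqrt7]$ and $\Z[\sqrt5]$ you propose to get $\P(\O)\leq 4$ from a local--global principle: every totally positive square-mod-$2$ element is locally a sum of four squares, and $x^2+y^2+z^2+w^2$ has one class in its genus over these orders. This cannot work. By Scharlau's classification (quoted in the Introduction), $\Z[\sqrt2]$, $\Z[\sqrt3]$ and $\Z\bigl[\frac{1+\sqrt5}{2}\bigr]$ are the \emph{only} real quadratic orders in which every totally positive square-mod-$2$ element is a sum of integral squares. So over $\Z[\sqrt6]$ and $\Z[\sqrt7]$ there exist totally positive square-mod-$2$ elements that are represented everywhere locally by $I_4$ (indeed by $I_3$, since the dyadic degree is $2$) yet are not sums of squares at all; hence $I_4$ is not alone in its genus there, and your argument would prove a false statement. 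What is actually needed is the \emph{conditional} claim that every element of $\sum\O^2$ is a sum of four squares, and the paper obtains this from Section 4e of \cite{CP}: a norm-descent via Cohn and Pall's Theorems 2 and 6 together with a check of elements of small norm, not a genus computation. (For the non-maximal order $\Z[\sqrt5]$ the same method applies; your local analysis, stated for maximal orders, would anyway need separate justification there.)

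The $\P(\O)\leq 3$ step has a smaller defect of the same kind: one class per genus for $x^2+y^2+z^2$ is correct for $\Z[\sqrt2]$ and $\Z\bigl[\frac{1+\sqrt5}{2}\bigr]$ (Dzewas, Maa{\ss}), but over $\Z[\sqrt3]$ the genus of $I_3$ contains \emph{two} classes. The argument the paper relies on, attributed to Kneser via Scharlau's dissertation, is to show by an ad hoc trick that everything represented by the second form in the genus is also represented by three squares. You correctly sense that $\Z[\sqrt3]$ is the hard case, but the obstacle is not that the genus theory is delicate; it is that the one-class hypothesis your argument rests on is simply false there and must be replaced by a different idea.
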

	In the following paragraphs, we explain in which reference to find which part of the proof of this theorem.
	
	As far as we know, the upper bound $\P(\,\cdot\,) \leq 5$ is due to Peters \cite{Pe}, and in all but the seven exceptional cases (i.e.\ maximal orders in $\Q(\!\sqrt{n})$, $n=2,3,5,6,7,13$ and the non-maximal order $\Z[\sqrt5]$), he also exhibits the element of length $5$.
	
	The sufficiency of three squares for $n=2$ and $n=5$ stems from the fact that over these fields, the genus of $x^2+y^2+z^2$ contains only one class (Peters cites Dzewas \cite{Dz} but the results are at least partially older, see e.g.\ \cite{Ma}). Showing the same for $\Z[\sqrt3]$ is more involved, since there the genus contains two non-equivalent forms; however, using a nice trick one sees that anything represented by the other form is also represented by three squares. Scharlau, in his dissertation \cite{Sch2}, Kapitel 0, Satz III (iv), attributes this result to \enquote{Kneser, unpublished}.
	
	The sufficiency of four squares in $\Z[\sqrt6]$ and $\Z[\sqrt7]$ is contained in Section 4e of \cite{CP}, together with the curious fact that $12+2\sqrt{13}$ is in fact the \emph{only} (up to conjugation and multiplication by squares of units) element of length $5$ in $\Z\bigl[\frac{1+\sqrt{13}}{2}\bigr]$. This explains why Peters did not find this element (he was clearly unaware of Cohn's and Pall's paper). The sufficiency of four squares in $\Z[\sqrt5]$ is not stated in \cite{CP} explicitly, but it follows by the same method as for $\sqrt6$ and $\sqrt7$, namely using Theorems 2 and 6 and checking elements of small norms.\notabene{It was not that simple! After understanding Theorem 6 it seems to turn out that one must check the totally positive elements of $\Z[2\sqrt5]$ with norm up to $225$ and elements with norms divisible by $16$ up to $400$. Fortunately, by Theorem 2 it suffices to check one element of each norm. But already finding a representative of each admissible norm was quite some work!}
	
	Note that Peters' upper bound $5$ is one instance of the bound given by Kala and Yatsyna in \cite{KY}, which is in turn generalised by our Proposition \ref{pr:upperbound}. Also observe that proving all the listed lower bounds is straightforward -- in Subsection \ref{ss:algorithms} we explain that determining the length of a given element in any fixed order is only a computational task, and in quadratic orders this is easily done with pen and paper, even keeping $f$ and $n$ as parameters in the last two cases. We show a slight variation of this in Observation \ref{ob:betterQuadratic}.
	
	\smallskip
	
	Let us remark that the seven problematic orders for which the element $7 + (1+f\sqrt{n})^2$ or $7 + \bigl(1 + f\frac{1+\sqrt{n}}{2}\bigr)^2$ has length less than $5$, namely the rings of integers in $\Q(\!\sqrt{n})$ for $n=2,3,5,6,7,13$, together with $\Z[\sqrt5]$, are exactly those where $\ell(7)<4$. In particular, $7 = \bigl(\frac{1+\sqrt{13}}{2}\bigr)^2+\bigl(\frac{1-\sqrt{13}}{2}\bigr)^2$.
	
	We shall sometimes exploit parts of the above theorem, namely the information that a particular element has length $5$ (or sometimes $3$ or $4$) in the ring of integers of a quadratic subfield. The knowledge of the quadratic case was the starting point of our examination since it provides a hint on how to construct elements with big length.
	
	\begin{observation} \label{ob:betterQuadratic}
		Peters' choice $7 + \bigl(f\frac{1+\sqrt{n}}{2}\bigr)^2$ for the element of length $5$ is certainly possible, but for non-maximal orders, it is not the simplest one. In this observation, we suggest an alternative. Denote $N = f^2n$; then every quadratic order is either $\Z[\sqrt{N}]$ or $\Z\bigl[\frac{1+\sqrt{N}}{2}\bigr]$ with $N \equiv 1 \pmod4$, where $N$ is no longer square-free, only non-square. Then in $\Z[\sqrt{N}]$ with $N \geq 8$, the element $7 + (1+\sqrt{N})^2$ has length $5$ (this was proven by Peters), while in $\Z\bigl[\frac{1+\sqrt{N}}{2}\bigr]$ with $N \equiv 1 \pmod4$, $N \geq 17$, we claim that
		\[
		\ell\Bigl(7 + \Bigl(\frac{1+\sqrt{N}}{2}\Bigr)^2\Bigr) = 5.
		\]
		We provide the proof of this fact since it is a good and much simpler illustration of what happens later in some of our proofs. Write $\frac{29+N}{4} + \frac{\sqrt{N}}{2} = \sum \bigl(\frac{a_i+b_i\sqrt{N}}{2}\bigr)^2$, $a_i \equiv b_i \pmod2$. This is equivalent to the two equalities $29 + N = \sum a_i^2 + N\sum b_i^2$ and $1 = \sum a_ib_i$. Since changing signs inside of the squares does not affect anything, assume $b_i \geq 0$. Clearly there is a nonzero $b_i$, so $\sum b_i^2 \geq 1$. If $\sum b_i^2 \geq 3$, the first equality yields $29 + N \geq 3N$, a contradiction. If $\sum b_i^2 = 2$, we can assume $b_1=b_2=1$; then $a_1$ and $a_2$ are odd and all other $a_i, b_i$ are even, which contradicts the equality $1 = \sum a_ib_i$.
		
		Therefore $\sum b_i^2 = 1$, say $b_1=1$ and $b_i=0$ otherwise. We have $1 = \sum a_ib_i = a_1$, so the first summand is $\bigl(\frac{1+\sqrt{N}}{2}\bigr)^2$ and all the others are squares of rational integers. Since their sum is $7$, there must be at least four of them, which concludes the proof.
	\end{observation}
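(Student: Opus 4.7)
The plan is to prove the two bounds $\ell \leq 5$ and $\ell \geq 5$ separately, with all the real content concentrated in the lower bound. The upper bound is immediate: since $7 = 1^2 + 1^2 + 1^2 + 2^2$ in $\Z$, the element $7 + \bigl(\frac{1+\sqrt{N}}{2}\bigr)^2$ is visibly a sum of five squares in $\Z\bigl[\frac{1+\sqrt{N}}{2}\bigr]$.

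For $\ell \geq 5$, I would argue by contradiction, supposing a four-square representation $\sum_{i=1}^4 \bigl(\frac{a_i+b_i\sqrt{N}}{2}\bigr)^2$ with $a_i \equiv b_i \pmod 2$ for each $i$. Expanding and equating with $\frac{29+N}{4} + \frac{\sqrt N}{2}$ in the $\Q$-basis $\{1,\sqrt N\}$ gives the system
\[
\sum a_i^2 + N\sum b_i^2 = 29 + N, \qquad \sum a_i b_i = 1,
\]
and I may assume $b_i \geq 0$ throughout by flipping the signs of both $a_i$ and $b_i$ simultaneously when necessary. A case analysis on $B := \sum b_i^2$ handles most possibilities: $B=0$ is killed by the second equation; $B \geq 3$ would force $2N \leq 29$, contradicting $N \geq 17$; and for $B=2$, the two $b_i$'s that equal $1$ force the corresponding $a_i$'s to be odd and the other two to be even, so $\sum a_i b_i$ comes out even.

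The main obstacle is the remaining case $B=1$. Here one has, WLOG, $b_1 = 1$ and $b_2=b_3=b_4=0$, which forces $a_1 = 1$ from the second equation. Then $a_2, a_3, a_4$ are all even and the first equation collapses to $a_2^2 + a_3^2 + a_4^2 = 28$; dividing by $4$, this would express $7$ as a sum of three rational-integer squares, contradicting the fact that $7$ requires four squares in $\Z$. This reduction to $\ell(7)=4$ in $\Z$ is the key step I would flag in advance, and it also explains why $7$ (the smallest positive integer not representable as a sum of three integer squares) is exactly the right constant to attach to $\bigl(\frac{1+\sqrt N}{2}\bigr)^2$ to make the construction work.
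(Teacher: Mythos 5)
Your proof is correct and follows essentially the same route as the paper: the same coefficient equations, the same case analysis on $\sum b_i^2$ with the same parity and trace arguments, and the same final reduction to the fact that $7$ is not a sum of three integer squares (the paper phrases the last step for an arbitrary number of summands, concluding that the residual $7$ needs four integer squares, whereas you fix four summands and reach $a_2^2+a_3^2+a_4^2=28$; these are equivalent). Your explicit inclusion of the upper bound $7=1^2+1^2+1^2+2^2$ is a harmless addition the paper leaves implicit.
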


	Let us note that while the Pythagoras numbers of real quadratic orders are known, the effort to understand sums of squares in these orders continues: See e.g.\ the already cited papers by B.\ M.\ Kim, J.\ Y.\ Kim, P.-S.\ Park \cite{KK, KP}; one of the present authors recently investigated \cite{Ra} the following question: When does $\sum \O_F^2$ contain all totally positive elements of $m\O_F$ for a given $m \in \N$ and real quadratic field $F$?

	\section{Lemmata and the use of computers} \label{se:lemmata}
	
	The first part of this section contains some general auxiliary results about arithmetics of biquadratic number fields. In the second part we explain how one can use a simple algorithm to prove a lower bound for the Pythagoras number of a given order $\O$. Since we used these self-written computer programs whenever a general \enquote{pen-and-paper} proof failed, in the third part we provide a list of statements proven in this way, to be referenced in later sections.

	\subsection{Some arithmetics of biquadratic number fields}
	Given an algebraic integer $\alpha$ in a quadratic subfield $F$ of our biquadratic field $K$, it is quite possible that $\alpha$ is not a square in $F$ but becomes a square in $K$; a typical example is $2 + \sqrt3 = \bigl(\frac12 (\sqrt{2}+\sqrt{6})\bigr)^2$. (Bear in mind that $\alpha$ is a square in $F$, resp.\ $K$, if and only if it is a square in $\O_F$, resp.\ $\O_K$.) However, the resulting square root of $\alpha$ satisfies a strong condition:
	%Given a number $\alpha$ in a quadratic subfield $F$ of our biquadratic field $K$, it is quite possible that $\alpha$ is not a square in $F$ but becomes a square in $K$;\footnote{Since maximal orders are integrally closed, $\alpha$ is a square in $F$, resp.\ $K$, if and only if it is a square in $\O_F$, resp.\ $\O_K$.} a typical example is $2 + \sqrt3 = \bigl(\frac12 (\sqrt{2}+\sqrt{6})\bigr)^2$. However, the resulting square root of $\alpha$ satisfies a strong condition:
	
	\begin{lemma}\label{le:KTZ}
		Let $K=\BQ{n_1}{n_2}$ with $n_1, n_2>1$ square-free and distinct; put $n_3 = \frac{n_1n_2}{\gcd(n_1,n_2)^2}$.
		
		If $\beta=x+y\sqrt{n_1}+z\sqrt{n_2}+w\sqrt{n_3} \in K$ (with $x,y,z,w \in \Q$) satisfies $\beta^2\in\Q(\!\sqrt{n_1})$, then either $x=y=0$ or $z=w=0$.
	\end{lemma}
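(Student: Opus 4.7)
The plan is to expand $\beta^2$ in the $\Q$-basis $(1,\sqrt{n_1},\sqrt{n_2},\sqrt{n_3})$ of $K$ and then exploit the vanishing of the coefficients of $\sqrt{n_2}$ and $\sqrt{n_3}$. Writing $d=\gcd(n_1,n_2)$, the identities
\[
\sqrt{n_1 n_2}=d\sqrt{n_3},\qquad \sqrt{n_1 n_3}=\frac{n_1}{d}\sqrt{n_2},\qquad \sqrt{n_2 n_3}=\frac{n_2}{d}\sqrt{n_1}
\]
allow me to compute $\beta^2$ explicitly; I will collect the coefficient of $\sqrt{n_2}$ as $2xz+\tfrac{2yw n_1}{d}$ and the coefficient of $\sqrt{n_3}$ as $2xw+2yzd$.

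The condition $\beta^2\in\Q(\sqrt{n_1})$ then amounts to the homogeneous linear system
\[
xz+\frac{yw n_1}{d}=0,\qquad xw+yzd=0,
\]
viewed as a system in the unknowns $x,y$ with coefficients built from $z,w$. Its determinant equals $\frac{z^2d^2-w^2 n_1}{d}$, so I would split into two cases according to whether this determinant vanishes.

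If $z^2d^2-w^2 n_1\neq 0$, the only solution is $x=y=0$, which is one of the two conclusions. Otherwise $(zd)^2=n_1 w^2$; the plan in this case is to argue that $z=w=0$. This is the only step with any subtlety: if $w=0$ we immediately get $zd=0$, hence $z=0$; if $w\neq 0$ then $n_1=(zd/w)^2$ would be a rational square, contradicting the assumption that $n_1$ is square-free and $n_1>1$ (the latter being implicit in $K$ being biquadratic).

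I do not anticipate any real obstacle beyond bookkeeping of the expansion; the main thing to be careful about is handling the conversions $\sqrt{n_i n_j}\in\Q\cdot\sqrt{n_k}$ correctly (these only depend on $d$, not on whether $n_1,n_2$ are coprime, so no congruence assumption is needed), and checking that the degenerate subcase $w=0$ or $z=0$ in the vanishing-determinant case does not leak a spurious solution.
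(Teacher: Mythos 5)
Your argument is correct and complete: the expansion of $\beta^2$ in the basis $(1,\sqrt{n_1},\sqrt{n_2},\sqrt{n_3})$, the conversion identities $\sqrt{n_1n_2}=d\sqrt{n_3}$, $\sqrt{n_1n_3}=\tfrac{n_1}{d}\sqrt{n_2}$, $\sqrt{n_2n_3}=\tfrac{n_2}{d}\sqrt{n_1}$, the resulting $2\times 2$ homogeneous system in $(x,y)$ with determinant $\tfrac{z^2d^2-w^2n_1}{d}$, and the dichotomy (nonsingular forces $x=y=0$; singular forces $z=w=0$ because a square-free integer $n_1>1$ is not a rational square) all check out, and the degenerate subcase $w=0$ is handled properly. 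Note, however, that the paper does not prove this lemma at all: it simply cites Lemma 4.5 of \cite{KTZ} (attributing the original statement to \cite{CLSTZ}). So there is nothing to compare against internally; your proof is a self-contained, elementary verification of the cited fact, essentially the standard argument one finds in those references, and it has the small additional merit of making explicit that no coprimality or congruence hypotheses on $n_1,n_2$ are needed, only that they are square-free, distinct and greater than $1$.
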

	\begin{proof}
		This is Lemma 4.1 of \cite{KTZ}; as far as we know, the original reference is \cite{CLSTZ}.
	\end{proof}

	The fields of type (B4) are the only ones where quarter-integers (i.e.\ numbers from $\Z + \frac14$) occur as coefficients of algebraic integers in the vector space basis. The following lemma shows how such elements behave when squared. Remember that in (B4), the elements $p,q,r$ are freely interchangeable.
	
	\begin{lemma}\label{le:quarterSquares}
		Let $K=\BQ{p}{q}$ be a biquadratic field of type (B4) and $\gamma \in \O_K$ be of the form $\frac{a+b\sqrt{p}+c\sqrt{q}+d\sqrt{r}}{4}$ with $a,b,c,d$ odd. Then $\gamma^2$ is of the same form.
	\end{lemma}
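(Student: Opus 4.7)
The plan is to reduce modulo $2$ and exploit the Frobenius endomorphism $x \mapsto x^2$ on the characteristic-$2$ ring $\O_K/2\O_K$. Let $\omega$ denote the fourth basis element of $\O_K$ in type (B4a) or (B4b) (the one with denominator $4$), and set $\alpha = \tfrac{1+\sqrt p}{2}$, $\beta = \tfrac{1+\sqrt q}{2}$. Writing $\gamma = e_1 + e_2 \alpha + e_3 \beta + e_4 \omega$ in the integral basis, only $\omega$ contributes a nonzero $\sqrt r$-component, so comparing $\sqrt r$-coefficients gives $d = e_4$. Hence the hypothesis ``$a, b, c, d$ all odd'' forces $e_4$ to be odd; conversely, if $e_4$ is odd then the explicit expansion formulas make $a$, $b$, $c$ automatically odd as well. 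The same equivalence applies to $\gamma^2$, so the lemma reduces to showing that whenever $e_4(\gamma)$ is odd, so is $e_4(\gamma^2)$.

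By the Frobenius identity in $\O_K/2\O_K$,
\[
\gamma^2 \equiv e_1 + e_2\alpha^2 + e_3\beta^2 + e_4\omega^2 \pmod{2\O_K}.
\]
The three elements $1$, $\alpha^2 = \tfrac{p+1}{4} + \tfrac{\sqrt p}{2}$ and $\beta^2 = \tfrac{q+1}{4} + \tfrac{\sqrt q}{2}$ have no $\sqrt r$-component, so their integral-basis expansions satisfy $e_4(1) = e_4(\alpha^2) = e_4(\beta^2) = 0$. Consequently $e_4(\gamma^2) \equiv e_4(\gamma) \cdot e_4(\omega^2) \pmod 2$, and the entire lemma reduces to checking that $e_4(\omega^2)$ is odd.

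To finish, I would expand $16\omega^2$ directly using the identities $\sqrt{pq} = r_0\sqrt r$, $\sqrt{pr} = q_0\sqrt q$, $\sqrt{qr} = p_0\sqrt p$ (recall that $p_0, q_0, r_0$ are all odd in type (B4) because $p, q, r \equiv 1 \pmod 4$). A direct computation yields $e_4(\omega^2) = \tfrac{1+r_0}{2}$ in type (B4a) and $e_4(\omega^2) = \tfrac{1-r_0}{2}$ in type (B4b). Since $r_0 = \gcd(p,q) \equiv 1 \pmod 4$ in (B4a) and $r_0 \equiv 3 \pmod 4$ in (B4b), by the very definitions of the subcases, both values are odd integers, completing the argument. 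The only real obstacle is the bookkeeping translation between the quarter-integer form and the parity of the integral-basis coefficient $e_4$; once that equivalence is established, the Frobenius trick collapses everything to a single explicit calculation of $\omega^2$.
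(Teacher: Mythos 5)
Your proof is correct, but it takes a genuinely different route from the paper. The paper's proof computes a single coefficient of $\gamma^2$ directly — namely $\atr\gamma^2=\frac{1}{16}(a^2+b^2p+c^2q+d^2r)$ — and reduces everything to the congruence $1+p+q+r\equiv 4\pmod 8$, which follows from $p\equiv q\equiv r\equiv 1\pmod 4$; it then invokes the observation that if one coordinate of an algebraic integer lies in $\Z+\frac14$, all four do. Your argument instead linearises squaring via the Frobenius on $\O_K/2\O_K$, so that the whole question collapses to the parity of $e_4(\omega^2)$ for the single basis element $\omega$; the translation between ``all of $a,b,c,d$ odd'' and ``$e_4$ odd'' is correct (indeed $a\equiv b\equiv c\equiv d\equiv e_4\pmod 2$ from the expansion $a=4e_1+2e_2+2e_3+e_4$, $b=2e_2\pm e_4$, $c=2e_3+e_4$, $d=e_4$), and your values $e_4(\omega^2)=\frac{1\pm r_0}{2}$ check out against the expansion $16\omega^2=(1+p+q+r)+2(1\pm p_0)(\pm\sqrt p)+\cdots$ using $\sqrt{pq}=r_0\sqrt r$. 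What the paper's approach buys is brevity and uniformity: it needs no case split between (B4a) and (B4b) and no explicit computation of $\omega^2$. What yours buys is structure: the Frobenius reduction makes transparent \emph{why} the quarter-integer form is preserved under squaring (it is a statement about the image of the squaring map in $\O_K/2\O_K$), and the method would generalise to tracking any fixed integral-basis coordinate mod $2$. The only cost is the slightly heavier bookkeeping you already acknowledge, plus the subtype-dependent sign, which is exactly absorbed by the defining congruences $r_0\equiv 1$ resp.\ $3\pmod 4$.
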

	\begin{proof}
		
		This is just a straightforward computation which uses the fact that $p\equiv q \equiv r \equiv 1 \pmod4$. It suffices to check one of the four coefficients of $\gamma^2$ in the vector space basis $(1,\sqrt{p},\sqrt{q},\sqrt{r})$; if one of them is in $\Z + \frac14$, then the others must be as well, since $\gamma^2$ is an algebraic integer. We choose the first coefficient:
		$\atr\gamma^2 = \frac1{16}(a^2+b^2p+c^2q+d^2r)$. Checking that this is in $\frac{\Z}{4}$ but not in $\frac{\Z}{2}$ is equivalent to proving that $a^2+b^2p+c^2q+d^2r \equiv 4 \pmod8$. And since $\mathrm{odd}^2 \equiv 1 \pmod8$, we actually want
		\[
		1+p+q+r \equiv 4 \pmod{8},
		\]
		which is readily checked.
	\end{proof}

	\subsection{The use of computers} \label{ss:algorithms}
	
	While it is very difficult to find an upper bound for the Pythagoras number of an order and while this paper is mostly about finding lower bounds for \emph{infinite families} of number field orders, the task of finding some lower bound for one given order is easily algorithmically solved. Here we explain the ideas behind such an algorithm and in the next subsection we list some results obtained by using it. The algorithm has a reasonable running time only for elements with small traces, so it becomes hardly usable if we adjoin square roots of large numbers $m,s$; luckily, to fill the gaps in our proofs, we mostly needed to deal with fields where $m,s<100$. Implementation in Python is available at \url{https://github.com/raskama/number-theory/tree/main/biquadratic}.
	
	While we describe the algorithm specifically for full rings of integers, it works for non-maximal orders as well, as long as one knows their integral basis.
	
	Given an $\alpha \in \O_K$, we want to determine whether it is a sum of squares in $\O_K$, and if so, then how many squares are needed. It is in fact possible to find all representations of $\alpha$ as a sum of squares as follows:
	\begin{enumerate}
		\item If $\alpha$ is not totally positive, then it is not a sum of squares.
		\item If $x^2$ occurs in a representation of $\alpha$ as a sum of squares, then $\alpha \succcurlyeq x^2$; in particular, $\atr\alpha \geq \atr x^2$. Therefore we identify all possible summands $x^2$ as follows:
		\begin{enumerate}
			\item Suppose $x = \frac{a+b\sqrt{m}+c\sqrt{s}+d\sqrt{t}}{4}$ and bear in mind that depending on the integral basis, there are congruence conditions imposed on the integers $a,b,c,d$. Find all quadruples which respect these conditions and satisfy
			\[
			\frac{1}{16}(a^2 + b^2m + c^2s + d^2t) \leq \atr\alpha;
			\]
			clearly there are only finitely many of them since $\atr\alpha$ is a given constant.
			\item From the thus obtained list of possible $x^2$, omit those which do not satisfy $\alpha \succcurlyeq x^2$. In this way, the following finite set is obtained:
			\[
			S_{\alpha} = \{x^2 : x \in \O_K, \alpha \succcurlyeq x^2\}.
			\]
		\end{enumerate}
		\item Now it remains to check whether $\alpha$ can be written as a sum of any number of elements of the finite set $S_{\alpha}$. This can be done straightforwardly in many ways, e.g.\ by recursion: Try one element $x^2 \in S_{\alpha}$ and then check whether there are any elements $y^2 \in S_{\alpha}$ satisfying $\alpha - x^2 \succcurlyeq y^2$; if there are, go over them one by one, etc.
	\end{enumerate}
	
	This algorithm is very useful if one has a guess about an element which will require many squares in its representation. If one wants to get an idea about the probable value of $\P(\O_K)$, one can use the following modification, which determines the lengths of all elements up to a given trace:
	\begin{enumerate}
		\item Choose a constant $T$ and find the set
		\[
		S^{(1)} = \{x^2 : x \in \O_K, \atr x^2 \leq T\}.
		\]
		This is done similarly as in the first algorithm.
		\item Compute all sums of two elements of $S^{(1)}$ and reject those whose absolute trace exceeds $T$, thus forming the set
		\[
		S^{(2)} = \{x_1^2+x_2^2 : x_1,x_2 \in \O_K, \atr (x_1^2+x_2^2) \leq T\}.
		\]
		\item By iteratively adding all elements of $S^{(1)}$, compute the sequence of sets
		\[
		S^{(n)} = \{x_1^2 + \cdots + x_n^2 : x_i \in \O_K, \atr (x_1^2 + \cdots + x_n^2) \leq T\}.
		\]
		\item Stop as soon as $S^{(n+1)}=S^{(n)}$. Then $\P(\O_K)\geq n$; in particular, any element of $S^{(n)} \setminus S^{(n-1)}$ has length $n$.
	\end{enumerate}
	
	As $S^{(1)}$ has $O(T^2)$ elements, the time and space complexity of this algorithm is $O(T^{2(n+1)})$, where $n$ is the expected  $\P(\O_K)$ as above. To reduce the space complexity, it is e.g.\ possible to restrict ourselves to finding all sums of $n$ squares with a specific trace (and do this for every trace up to $T$).
	
	Checking whether certain element $\alpha$ is sum of $n$ squares can be done with time complexity $O(T^{2\ceil{n/2}})$, where $T = \atr \alpha$, since one can construct $S^{(\floor{n/2})}$, $S^{(\ceil{n/2})}$ and for every element in $S^{(\floor{n/2})}$ check whether its complementary element is in $S^{(\ceil{n/2})}$.
	
	These time restrictions proved to be sufficient for our needs and also for producing hypotheses about infinite families of $\O_K$. In particular, the results of Subsection \ref{ss:235} were observed after running this algorithm over several hundreds of fields of similar type.

	\subsection{The computed data}
	
	Here we collected those lemmata needed in later proofs which were checked by an algorithm explained in the previous subsection. Of course, in principle they could be verified by a straightforward but very tedious computation with pen and paper.
	
	\begin{lemma}\label{le:comp}
		Let $K = \BQ{m}{s}$ be a totally real biquadratic field with the usual Convention \ref{no:mst} that $m<s<t$ are square-free. To simplify the notation, put
		\[
		\alpha_0 = \begin{cases}
			7 + (1+\sqrt{m})^2 & \text{if $m \not\equiv 1 \pmod 4$},\\
			7 + \Bigl(\frac{1+\sqrt{m}}{2}\Bigr)^2 & \text{if $m \equiv 1 \pmod 4$}.
		\end{cases}
		\]
		Then:
		\begin{enumerate}
			\item If $(m,s)$ is one of $(17,19)$, $(17,21)$, $(17,22)$, $(21,22)$ and $(21,23)$, then $\ell(\alpha_0)=5$. \label{it:mIs1} 
			\item If $(m,s)$ is one of $(10,11)$, $(10,17)$, $(10,19)$, $(11,14)$, $(11,17)$, $(11,21)$; $(14,15)$, $(14,17)$, $(14,19)$, $(15,17)$, $(15,21)$, $(15,22)$, $(19,21)$, $(19,22)$, $(22,23)$, $(23,26)$ -- i.e.\ if $K$ lies in $\K_1 \cup \K_2 \cup \K_3$ in the notation of Subsection \ref{ss:P>=5mEquiv23} and $(m,s) \neq (10,13)$, $(11,13)$, $(30,35)$ -- then $\ell(\alpha_0)=5$.  \label{it:mNot1Gen}
			\item If $(m,s)$ is $(10,13)$ or $(11,13)$, then $3+(\frac{1+\sqrt{13}}{2})^2+(1+\frac{1+\sqrt{13}}{2})^2$ has length $5$; if $(m,s) =  (30,35)$, the same holds for $7 + (1+\sqrt{42})^2$. \label{it:threeexceptions}
			\item If $(m,s)$ is one of $(22,26)$, $(26,30)$, $(19,23)$, then $15 + (1+\sqrt{m})^2$ has length $5$. \label{it:mNot1SpecGood15} 
			\item If $(m,s)$ is one of $(30,38)$, $(34,42)$, $(38,46)$, $(31,39)$, $(35,43)$, $(39,47)$, $(43,51)$, $(47,55)$; $(34,46)$, $(31,43)$, $(35,47)$, $(39,51)$, $(43,55)$, then $28 + (1+\sqrt{m})^2$ has length~$5$. \label{it:mNot1SpecGood28} 
			\item If $(m,s)$ is one of $(10, 14)$, $(11,15)$, $(15,19)$; $(14,22)$, $(22,30)$, $(26,34)$, $(11,19)$, $(15,23)$, $(23,31)$; $(10,22)$, $(14,26)$, $(22,34)$, $(26,38)$, $(11,23)$, $(19,31)$, $(23,35)$,
			then $7+\bigl(\frac{\sqrt{m}+\sqrt{s}}{2} \bigr)^2$ has length $5$.  \label{it:mNot1SpecBad}
			\item If $m=6$ and $s$ is one of $13$, $17$, $29$, $37$, $41$; $7$, $11$, $19$; $26$, $34$; $14$, $22$, $38$, $46$, $62$, $70$, $86$, then the $\alpha$ defined in Proposition \ref{pr:sqrt6} has length $5$.  \label{it:sqrt6}
			\item If $m=6$ and $s=10$, then $3+\bigl(\sqrt{6} -\frac{\sqrt{6}+\sqrt{10}}{2}\bigr)^2+\bigl(2+\frac{\sqrt{6}+\sqrt{10}}{2}\bigr)^2$ has length $5$.  \label{it:sqrt6spec}
			\item If $m=7$ and $s$ is one of $13$, $17$, $29$, $33$, $37$, $41$; $10$; $15$, $19$, $23$, $31$, $39$, $43$, $47$, $51$, then the $\alpha$ defined in Proposition \ref{pr:sqrt7} has length $5$. \label{it:sqrt7}
			\item If $m=7$ and $s=11$, then $3+\bigl(\frac{\sqrt{7}+\sqrt{11}}{2}\bigr)^2+\bigl(1+\frac{\sqrt{7}+\sqrt{11}}{2}\bigr)^2$ has length $5$.\notabene{By the way, an analogy of this holds for all the fields of same type contained in the previous statement.} 
			\label{it:sqrt7spec}
			%\item If $m=13$ and $s = 17$ or $21$, then the $\alpha$ defined in Proposition \red{ref{pr:sqrt13}} has length $5$. \label{it:sqrt13} 
			\item If $m=2$ and $s$ is one of $11$, $15$, $17$, $21$, $29$, then the $\alpha$ defined in Proposition \ref{pr:sqrt2} has length $5$. \label{it:sqrt2}
			\item If $m=2$ and $s=13$, then $3+(1+\sqrt{2})^2+\Bigl(\sqrt{2}+\frac{1+\sqrt{13}}{2}\Bigr)^2+\Bigl(2+\frac{1+\sqrt{13}}{2}+\frac{\sqrt{2}+\sqrt{26}}{2}\Bigr)^2$ has length $5$. \label{it:sqrt2spec}
			\item If $m=3$ and $s$ is one of $13$, $17$, $29$, $37$, $41$; $10$; $11$, $19$, $23$, $31$, $35$, $43$, then the $\alpha$ defined in Proposition \ref{pr:sqrt3} has length $5$.  \label{it:sqrt3}
			\item If $m=5$ and $s = 13$ or $s=17$, then the $\alpha$ defined in Proposition \ref{pr:sqrt5sIs1} has length $5$. \label{it:sqrt5sIs1}
			\item If $m=5$ and $s\not\equiv 1 \pmod4$, $7 < s \leq 3253$, then the $\alpha$ defined in Proposition \ref{pr:sqrt5sNot1} has length $5$. \label{it:sqrt5sNot1}
			\item If $m=13$ and $s$ is one of $17$, $21$, $29$, $33$, $37$, $41$, then the $\alpha$ defined in Proposition \ref{pr:sqrt13P>=6} has length $6$. \label{it:sqrt13P>=6}
			\item In fields $\BQ{13}{n}$ with $n=6,7,10,11$, the element $12+2\sqrt{13} + (1+\sqrt{n})^2$ has length $6$. \label{it:sqrt13=s}
		\end{enumerate}
	\end{lemma}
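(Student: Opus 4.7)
The plan is to establish both directions of $\ell(\alpha) = 6$, where $\alpha = 12 + 2\sqrt{13} + (1+\sqrt{n})^2 = (13+n) + 2\sqrt{13} + 2\sqrt{n}$ and $n \in \{6,7,10,11\}$.

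For the upper bound $\ell(\alpha) \leq 6$, one inherits Peters' quintet for the quadratic subring: Theorem \ref{th:quadratic} gives $12 + 2\sqrt{13} = 1 + 1 + 1 + \bigl(\tfrac{1+\sqrt{13}}{2}\bigr)^2 + \bigl(1 + \tfrac{1+\sqrt{13}}{2}\bigr)^2$ in $\Z\bigl[\tfrac{1+\sqrt{13}}{2}\bigr] \subseteq \O_K$, and adjoining $(1+\sqrt{n})^2$ as the sixth summand yields $\alpha$.

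For the lower bound, I would suppose for contradiction that $\alpha = \sum_{i=1}^5 x_i^2$ with $x_i \in \O_K$. Each field $K = \BQ{13}{n}$ is of type (B2) (for $n \in \{6,10\}$) or (B3) (for $n \in \{7,11\}$) with $q=13$ and $p=n$, so the integral basis reads $\bigl(1,\ \sqrt{n},\ \tfrac{1+\sqrt{13}}{2},\ \tfrac{\sqrt{n}+\sqrt{13n}}{2}\bigr)$. Writing $x_i = \alpha_i + \beta_i \sqrt{n} + \gamma_i\sqrt{13} + \delta_i\sqrt{13n}$ with $\alpha_i,\beta_i,\gamma_i,\delta_i \in \tfrac{1}{2}\Z$ constrained by $\alpha_i - \gamma_i,\ \beta_i - \delta_i \in \Z$, comparing coordinates in $\sum x_i^2 = \alpha$ gives
\begin{align*}
\sum \alpha_i^2 + n\sum \beta_i^2 + 13\sum\gamma_i^2 + 13n\sum\delta_i^2 &= 13+n, \\
\sum\alpha_i\beta_i + 13\sum\gamma_i\delta_i &= 1, \\
\sum\alpha_i\gamma_i + n\sum\beta_i\delta_i &= 1, \\
\sum\alpha_i\delta_i + \sum\beta_i\gamma_i &= 0.
\end{align*}
The first equation, combined with $13n \geq 78 > 24 \geq 13+n$, forces every $\delta_i = 0$ (whence $\beta_i \in \Z$), and then $13\sum\gamma_i^2 \leq 13+n$ forces $\sum(2\gamma_i)^2 \leq 7$. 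Thus $(2\gamma_1,\ldots,2\gamma_5)$ lives in a finite set of integer $5$-tuples; for each such tuple the remaining equalities $\sum\alpha_i\gamma_i = 1$, $\sum\beta_i\gamma_i = 0$, $\sum\alpha_i\beta_i = 1$ together with $\sum\alpha_i^2 + n\sum\beta_i^2 \leq 13 + n - 13\sum\gamma_i^2$ leave only finitely many candidates for the pairs $(\alpha_i, \beta_i)$, all of which are to be excluded.

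The main obstacle will be this final enumeration: while the reductions above are crisp, the residual bookkeeping across $(\gamma_i)$-profiles and compatible $(\alpha_i,\beta_i)$-tuples is voluminous enough to make a human verification unwieldy, and the parity links $\alpha_i \equiv \gamma_i$, $\beta_i \equiv \delta_i \pmod{\Z}$ must be respected throughout. This is precisely the situation handled by the algorithm of Subsection \ref{ss:algorithms}: one constructs the finite set $S_\alpha = \{x^2 : x \in \O_K,\ \alpha \succcurlyeq x^2\}$ and verifies by a short recursion that no five-fold sum from $S_\alpha$ equals $\alpha$.
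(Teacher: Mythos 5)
Your proposal addresses only item (\ref{it:sqrt13=s}) of the lemma, but the statement comprises seventeen enumerated claims covering many different fields and witness elements; items (\ref{it:mIs1})--(\ref{it:sqrt13P>=6}) are not touched at all. This is the principal gap. The paper's own proof of the entire lemma is a direct appeal to the algorithm of Subsection \ref{ss:algorithms} (implemented in Python), so for the one item you do treat, your route --- hand-reductions via the integral basis and trace comparison, followed by a finite computer search over $S_\alpha$ --- lands in essentially the same place as the paper's. But even if one grants that the same ``reduce to a finite check and compute'' template applies to each of the other items, you would still need to say so and, more importantly, you would run into trouble with item (\ref{it:sqrt5sNot1}): the paper explicitly notes that the stated range $7 < s \leq 3253$ is too large for direct computation, so it verifies only $7 < s \leq 499$ by machine and disposes of the remainder by sharpening Proposition \ref{pr:ourStrongAsymptotics} (Observation \ref{ob:sqrt5Improvement}). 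A blanket invocation of the algorithm does not cover that part.

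Within item (\ref{it:sqrt13=s}) itself, your reductions are sound in substance but one inequality is stated incorrectly: since the integral basis contains $\frac{\sqrt{n}+\sqrt{13n}}{2}$, the coefficient $\delta_i$ may be a half-integer, so a nonzero $\delta_i$ contributes at least $13n\cdot\frac14$ to the trace, not $13n$. The correct comparison is $\frac{13n}{4} > 13+n$, which holds for $n\geq 6$ but only barely at $n=6$ ($19.5>19$); the chain $13n \geq 78 > 24 \geq 13+n$ as written only rules out integer $\delta_i$. The upper bound via Peters' five-square representation of $12+2\sqrt{13}$ in $\Z\bigl[\frac{1+\sqrt{13}}{2}\bigr]$ plus $(1+\sqrt{n})^2$ is fine.
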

	\begin{proof}
		As we pointed out, proving that an element $\alpha$ is not a sum of $n$ squares in a given field is just a straightforward computational task. Therefore, we proved this lemma by implementing the above-described algorithm in Python.
		
		One small exception is part (\ref{it:sqrt5sNot1}), but only for technical reasons: To reduce computation time, we verified the statement by computer only for $7 < s \leq 499$, while the remaining cases were handled by refining the proof of Proposition \ref{pr:ourStrongAsymptotics} -- see Observation \ref{ob:sqrt5Improvement}.
	\end{proof}
	
	During our computations, only seven fields stood out as possible candidates for $\P(\O_K) < 5$ (for all other fields, we were able to find an element of length $5$ with a relatively small trace). Especially, for $K = \BQ{2}{3},\BQ{2}{5},\BQ{3}{5}$ we suspect $\P(\O_K)=3$ and for $K =\BQ27, \BQ37, \BQ56, \BQ57$ we suspect $\P(\O_K)=4$. 
	
	The following proposition collects our results about these seven fields: We present an element of the suspected maximal length as well as bounds up to which the computation was completed. The example elements $\alpha_0$ are chosen with the lowest possible trace (many elements with the desired length can be found).
	
	\begin{proposition}\label{pr:comp_examples}
		If $K$, $l$ and $\alpha_0$ is one of the triples specified below then $\ell(\alpha_0) = l$ and $\ell(\alpha) \leq l$ for all $\alpha \in \sum \O_K^2$ such that $\Tr \alpha \leq 500$.
		
		\begin{enumerate}
			\item $K= \BQ23$, $l = 3$, $\alpha_0 = 6+\sqrt{2}+\sqrt{6}$;
			%1+(\frac{\sqrt{2}-\sqrt{6}}{2})^2+(1+\frac{\sqrt{2}+\sqrt{6}}{2})^2
			\item $K= \BQ25$, $l = 3$, $\alpha_0 = 6+\sqrt{5}$;
			%(\frac{1+\sqrt{5}}{2})^2+(\frac{1-\sqrt{5}}{2})^2+(\frac{\sqrt{2}+\sqrt{10}}{2})^2
			\item $K= \BQ35$, $l = 3$, $\alpha_0 = 3+\frac{1+\sqrt{5}}{2}$;
			%2+(\frac{1+\sqrt{5}}{2})^2
			\item $K= \BQ27$, $l = 4$, $\alpha_0 = 10+2\sqrt{2}+\sqrt{7}$;
			%3+(1+\sqrt{2})^2+(\frac{\sqrt{2}+\sqrt{14}}{2})^2
			\item $K= \BQ37$, $l = 4$, $\alpha_0 = 8+\sqrt{3}+\sqrt{7}$;
			%$2+(\frac{-\sqrt{3}+\sqrt{7}}{2})^2+(1+\frac{\sqrt{3}+\sqrt{7}}{2})^2$
			\item $K= \BQ56$, $l = 4$, $\alpha_0 = 10+2\sqrt{6}+\frac{1+\sqrt{5}}{2}$;
			%$2+(1+\sqrt{6})^2+(\frac{1+\sqrt{5}}{2})^2$
			\item $K= \BQ57$, $l = 4$, $\alpha_0 = 11+2\sqrt{7}+\frac{1+\sqrt{5}}{2}$.
			%as above
		\end{enumerate}
	\end{proposition}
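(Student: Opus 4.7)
The proof is a direct application of the two algorithms described in Subsection \ref{ss:algorithms}, carried out for each of the seven fields in turn.

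The plan is as follows. First, for each of the seven fields $K$, I would identify its type among (B1)--(B4b) in order to fix the correct integral basis: $\BQ23$ is of type (B1); $\BQ25$, $\BQ27$, $\BQ56$, $\BQ57$ are of type (B2); $\BQ35$ and $\BQ37$ are of type (B3). This determines the congruence conditions on quadruples $(a,b,c,d) \in \Z^4$ for which $\frac{a+b\sqrt{m}+c\sqrt{s}+d\sqrt{t}}{4}$ lies in $\O_K$, and fixes the representation in which every ring element and every sum of squares will be stored.

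To prove $\ell(\alpha_0) = l$ for each given example, I would invoke the first algorithm. The upper bound $\ell(\alpha_0) \leq l$ is checked by exhibiting an explicit decomposition (produced by the algorithm or by guessing from the form of $\alpha_0$). For the lower bound, I would enumerate the finite set
\[
S_{\alpha_0} = \{x^2 : x \in \O_K,\ \alpha_0 \succcurlyeq x^2\}
\]
by running over all valid quadruples with $\frac{1}{16}(a^2+b^2m+c^2s+d^2t) \leq \atr\alpha_0$, keeping only those satisfying $\sigma_i(\alpha_0 - x^2) \geq 0$ for each of the four embeddings, and then performing a depth-first search for a representation of $\alpha_0$ as a sum of fewer than $l$ elements of $S_{\alpha_0}$. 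The absence of such a representation proves $\ell(\alpha_0) \geq l$.

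For the second claim, that $\ell(\alpha) \leq l$ for every $\alpha \in \sum \O_K^2$ with $\Tr\alpha \leq M_K$, I would apply the second algorithm with absolute-trace bound $T = M_K/4$. Concretely, I would build $S^{(1)} = \{x^2 : \atr x^2 \leq T\}$ and iteratively construct
\[
S^{(k+1)} = \{\beta + x^2 : \beta \in S^{(k)},\ x^2 \in S^{(1)},\ \atr(\beta+x^2) \leq T\}
\]
until $k = l$. The claim then amounts to verifying that every element of $S^{(l+1)}$ already lies in $S^{(l)}$ within the trace window, i.e.\ that adjoining one more square never yields a new representable element under the trace bound; equivalently, every sum of squares in $\O_K$ with absolute trace at most $T$ is produced in $S^{(l)}$.

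The main obstacle is purely computational: the cardinality of $S^{(1)}$ is $O(T^2)$, so naive iteration costs $O(T^{2l})$ in both time and memory. The fields in the list have $m,s \leq 7$, which keeps $|S^{(1)}|$ manageable, and the bounds $M_K \in \{400,500\}$ give $T \leq 125$. Even so, to stay within reasonable resources I would stratify the computation by absolute trace (processing each value of $\atr$ separately) and, for checking individual elements, use the meet-in-the-middle trick noted in Subsection \ref{ss:algorithms}, comparing $S^{(\lfloor l/2 \rfloor)}$ with the set of complements drawn from $S^{(\lceil l/2 \rceil)}$. The actual verification is carried out by the Python implementation cited in Subsection \ref{ss:algorithms}; the proof therefore reduces to trusting that implementation and confirming that each of the seven fields is initialised with the correct integral basis and congruence conditions.
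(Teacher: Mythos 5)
Your approach is exactly the paper's: the proposition is not given a separate proof in the text, it simply records the output of the two algorithms of Subsection \ref{ss:algorithms} run on the seven fields, which is precisely what you describe (including the trace-stratification and meet-in-the-middle optimisations the paper itself mentions).

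However, your field-type classification, which you correctly identify as the one input that must be verified for the computation to be sound, contains two errors. For $\BQ27$ the three square-free squares are $2,7,14$ with $2\equiv 14\pmod 4$, so $p=2$, $r=14$, $q=7\equiv 3$ and the field is of type (B1), not (B2); using the (B2) basis would adjoin $\frac{1+\sqrt7}{2}$, which is not an algebraic integer, so the enumeration of $S_{\alpha_0}$ would contain spurious elements and could produce false short representations, invalidating the lower bound $\ell(\alpha_0)\geq l$. Likewise for $\BQ57$ the squares are $5,7,35$ with $7\equiv 35\equiv 3\pmod4$, so $p=7$, $q=5\equiv 1$ and the field is of type (B3), not (B2) (type (B2) requires $p\equiv 2\pmod 4$, which no generator satisfies here). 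With these two corrections the setup is right and the rest of your argument coincides with the paper's.
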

	
	Note that a nontrivial upper bound shall be proven only for fields containing $\sqrt5$, and in general, obtaining an upper bound even for a single field is difficult: Therefore, although we are convinced that e.g.\ $\P(\O_K)=3$ for $K=\BQ23$, we know only $3 \leq\P(\O_K)\leq 7$.
	
	\smallskip
	
	The following proposition collects some of our computed data for fields containing $\sqrt3$. Based on it, we conjecture that there are only finitely many fields $K=\BQ{3}{s}$ with $\P(\O_K)\leq 5$; this is in sharp contrast with the situation in fields containing $\sqrt2$ or $\sqrt5$ (see the conjecture in Subsection \ref{ss:conjecture} and compare this with the situation in quadratic fields, described in Section \ref{se:quadratic}).
	
	\begin{proposition}\label{pr:sqrt3shock}
		For $K=\BQ{3}{s}$ where $s=17$, $s=22$ or $26 \leq s \leq 55$, we have $\P(\O_K) \geq 6$. If $s$ is odd, the same holds also for $55 < s \leq 79$.
		
		In all these cases, there exists $\alpha \in \O_K$ with $\ell(\alpha)=6$ and $\Tr(\alpha)\leq 1000$.  For example, the following elements have length $6$:
		\[
		87 + 18\sqrt3 - 20\sqrt{17} - 4\sqrt{51}; \quad 150 + 45\sqrt3 - 14\sqrt{26}; \quad \frac12(199 - 50\sqrt3 + 14\sqrt{31} - 19\sqrt{93}).
		\]
	\end{proposition}
	
	Although our program showed that for $s = 58, 62, 70$ or $74$, elements with $\Tr(\alpha)\leq 1100$ have length at most five, we believe that $\P(\O_K)\geq 6$ holds even in those fields (and in fact for all $K \ni \sqrt3$ with $s \geq 26$). A hint towards this is also the fact that for $s=46$, all elements with $\Tr(\alpha)\leq 900$ have $\ell(\alpha)\leq 5$. The computer we used is not a particularly strong one, since numerical experiments were not our priority.

	\section{Results for fields with coprime \texorpdfstring{$m, s$}{m, s}} \label{se:coprime}

	In this section we examine the specific fields $K = \BQ{m}{s}$ where $m$ and $s$ are coprime, i.e.\ $t = ms$. They are easier to work with than the general case (and therefore also often occur in works of other authors); in particular, if we restrict to the fields of type (B1), we were able to prove that the Pythagoras number of $\O_K$ is $7$ (with a few exceptions); see Theorem \ref{th:(B1)P=7}. This proves one of the main results of this paper, Theorem \ref{th:main7infinitely}. The other results of this section can be put neatly together as follows:
	
	\begin{proposition} \label{pr:coprime}
		Let $p,q \notin\{2,3,5,6,7,13\}$ be two coprime square-free positive integers which are not both congruent to $3$ modulo $4$.\notabene{I didn't try that much, but I've managed to solve the problematic case $m\equiv s \equiv 3$ only if $s>32+3m$. In such a case, the length of $7 + (1+\sqrt{m})^2 + \bigl(\frac{\sqrt{m}+\sqrt{s}}{2}\bigr)^2$ is $6$. It's quite possible that the same element works in all other cases as well!} Then $\P(\O_K)\geq 6$ for $K=\BQ{p}{q}$.
	\end{proposition}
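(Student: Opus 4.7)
The plan is to exhibit a specific $\alpha\in\sum\O_K^2$ and to prove $\ell(\alpha)\geq 6$ by reducing any hypothetical length-$5$ representation of $\alpha$ in $\O_K$ to an impossible length-$4$ representation in a quadratic subring. Since $p,q\notin\{2,3,5,6,7,13\}$, Peters' Theorem \ref{th:quadratic} gives $\P(\Z[\sqrt m])=\P(\Z[\sqrt s])=5$ with explicit witnesses $\alpha_n := 7+(1+\sqrt n)^2$ when $n\not\equiv 1\pmod 4$ and $\alpha_n := 7+\bigl(\tfrac{1+\sqrt n}{2}\bigr)^2$ when $n\equiv 1\pmod 4$ (Observation \ref{ob:betterQuadratic}). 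The candidate element is
\[
\alpha \;=\; \alpha_m + \alpha_s,
\]
which is obviously a sum of integral squares.

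Suppose for contradiction $\alpha=\sum_{i=1}^{5}x_i^2$ with $x_i\in\O_K$, and write each $x_i = a_i+b_i\sqrt m+c_i\sqrt s+d_i\sqrt t$ in the $\Q$-basis $(1,\sqrt m,\sqrt s,\sqrt t)$, where $a_i,\dots,d_i$ lie in $\Z$, $\tfrac12\Z$ or $\tfrac14\Z$ according to the type (B1)--(B4) of $K$. Using $\sqrt m\sqrt s=\sqrt t$, $\sqrt m\sqrt t=m\sqrt s$, $\sqrt s\sqrt t=s\sqrt m$, one expands $\sum x_i^2$ and matches its four components against those of $\alpha$. The key first step is the trace bound
\[
\atr x_i^2 \;=\; a_i^2+mb_i^2+sc_i^2+td_i^2 \;\leq\; \atr\alpha,
\]
and since $t=ms\geq 100$ (as $p,q\geq 10$ and coprime) while $\atr\alpha$ is of order $m+s$, this forces $d_i=0$ for every $i$ outside a finite list of small exceptional pairs. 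Analogously $s\sum c_i^2\leq\atr\alpha$ yields $\sum c_i^2\leq 1$ for $s$ large enough, so matching the nonzero $\sqrt s$-coefficient of $\alpha$ singles out a unique index $i=1$ with $c_1\neq 0$. A short analysis of $(a_1,b_1,c_1)$ forces $x_1^2 = \alpha_s - 7$ up to sign; subtracting leaves $\sum_{i=2}^{5}x_i^2 = \alpha_m$ in $\O_{\Q(\sqrt m)}$, i.e.\ $\ell_{\Z[\sqrt m]}(\alpha_m)\leq 4$, contradicting Peters.

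The main obstacle is securing ``$d_i=0$ for every $i$''. In types (B1)/(B2) the only basis element with nonzero $\sqrt t$-component is $\tfrac{\sqrt m+\sqrt t}{2}$, contributing $d_i=\tfrac12,\,b_i=\tfrac12$; the resulting lower bound $mb_i^2+td_i^2\geq(m+t)/4$ excludes this case once $(m-4)(s-3)>76$, leaving only a finite explicit list of small pairs (essentially $m\in\{10,11,14,15\}$ with $s$ just above $m$). In type (B3) an analogous check rules out $\tfrac{1+\sqrt t}{2}$ outside finitely many pairs, and in type (B4) the quarter-integer element $\tfrac{1\pm\sqrt m+\sqrt s+\sqrt t}{4}$ is handled similarly with a slightly larger finite leftover. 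Each leftover pair is dispatched by refined trace bookkeeping or, failing that, by the computer algorithm of Subsection \ref{ss:algorithms} in the spirit of Lemma \ref{le:comp}. Finally, the excluded residue pattern $p\equiv q\equiv 3\pmod 4$ is precisely the one in which $t=pq\equiv 1\pmod 4$ and $\O_K$ (of type (B3) after relabelling) contains $\tfrac{\sqrt m+\sqrt s}{2}$, whose square $\tfrac{m+s}{4}+\tfrac{\sqrt t}{2}$ carries a nonzero $\sqrt t$-component within the trace budget for arbitrarily large $s$; this destroys the $d_i=0$ step and matches the footnote observation that the related element $7+(1+\sqrt m)^2+\bigl(\tfrac{\sqrt m+\sqrt s}{2}\bigr)^2$ can be shown to have length $6$ there only under the additional hypothesis $s>32+3m$.
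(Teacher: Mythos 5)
Your overall plan---choose a witness $\alpha$, kill the $\sqrt t$- and $\sqrt s$-components of any representation by trace comparisons, isolate the unique summand outside $\Q(\sqrt m)$, and reduce to Peters' quadratic result---is the same skeleton the paper uses (it proves this proposition by combining Propositions \ref{pr:B1coprime}, \ref{pr:B23comprime} and \ref{pr:B4coprime}). However, your witness is wrong, and fatally so. You take $\alpha=\alpha_m+\alpha_s=14+w_m^2+w_s^2$, where $w_n$ denotes $1+\sqrt n$ or $\frac{1+\sqrt n}{2}$ according to $n\bmod 4$. Since $14=3^2+2^2+1^2$, one has
\[
\alpha \;=\; 1^2+2^2+3^2+w_m^2+w_s^2,
\]
so $\ell(\alpha)\leq 5$ and $\alpha$ cannot certify $\P(\O_K)\geq 6$, no matter how the rest of the argument goes. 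The arithmetic slip surfaces at your final step: after extracting $x_1^2=w_s^2$ the remainder is not $\alpha_m$ but $\alpha_m+7=14+w_m^2$, which \emph{is} a sum of four squares in $\Z[\sqrt m]$, so no contradiction with Peters arises. The mechanism only works with exactly one copy of $7$: the paper's witness is $7+w_p^2+w_q^2$, and after forcing the two irrational summands one is left with $7$, which is not a sum of three squares in $\Z$, whence at least $2+4=6$ squares are needed.

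If you replace your $\alpha$ by $7+w_m^2+w_s^2$, the end of your argument becomes coherent, and the route ``force only $w_s^2$ and quote Peters in $\Q(\sqrt m)$'' is legitimate (the paper uses exactly this reduction elsewhere, e.g.\ in Propositions \ref{pr:mIs1} and \ref{pr:sqrt7}), whereas Proposition \ref{pr:B1coprime} instead pins down \emph{both} irrational summands and leaves a rational $7$. Be aware, though, that your intermediate steps are looser than what is actually needed: the bound $s\sum c_i^2\leq\atr\alpha$ alone does not give $\sum c_i^2\leq 1$ when $s$ is comparable to $m$ (half- and quarter-integer coefficients make $\sum c_i^2=\tfrac12$ or $\tfrac34$ possible within the trace budget), and the paper's proofs must additionally compare the $\sqrt p$-, $\sqrt q$- and $\sqrt{pq}$-coefficients and first establish the existence of nonzero $b_i$ and $c_j$ before the trace inequality becomes decisive. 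Your closing remark about the excluded case $p\equiv q\equiv 3\pmod 4$ is consistent with the paper's footnote, but the explanation ``$\bigl(\frac{\sqrt m+\sqrt s}{2}\bigr)^2$ carries a nonzero $\sqrt t$-component within the trace budget'' applies only after the witness is fixed correctly.
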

	\begin{proof}
		This is just a combination of Propositions \ref{pr:B1coprime}, \ref{pr:B23comprime} and \ref{pr:B4coprime}.
	\end{proof}
	
	As a side remark, the inequalities for $p,q$ in all the following propositions are optimal in the sense that if they are violated, then the given $\alpha$ has length less than $6$. To see this, observe that the inequalities only exclude fields containing square roots of $2$, $3$, $5$, $6$, $7$ or $13$, and in these fields $\ell(7)<4$, see Section \ref{se:quadratic}. On the other hand, at least for fields containing $\sqrt{13}$ another element of length $6$ can be found, see Proposition \ref{pr:sqrt13P>=6}.
	
	We start with a result for fields of type (B1); besides being one third of the above proposition, it will also serve as a lemma for the upcoming Theorem \ref{th:(B1)P=7}.
	
	\begin{proposition}[Type (B1)] \label{pr:B1coprime}
		Let $K=\BQ{p}{q}$ where $p\equiv 2$ and $q\equiv3 \pmod{4}$ are coprime square-free integers such that $p,q\geq 10$. Then $\P(\O_K)\geq 6$.
		
		In particular, the following element has length $6$:
		\[
		\alpha = 7+(1+\sqrt{p})^2+(1+\sqrt{q})^2.
		\]
	\end{proposition}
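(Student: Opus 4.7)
The upper bound $\ell(\alpha)\le 6$ is immediate from the four-square representation $7=1^{2}+1^{2}+1^{2}+2^{2}$ in $\Z\subset\O_K$. The real task is to show that $\alpha$ cannot be written as a sum of five squares in $\O_K$; I will assume $\alpha=\sum_{i=1}^{5}x_i^{2}$ with $x_i\in\O_K$ and derive a contradiction.

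First I set up coordinates. Writing $x_i=a_i+b_i\sqrt p+c_i\sqrt q+d_i\sqrt r$, the type-(B1) integral basis $\{1,\sqrt p,\sqrt q,(\sqrt p+\sqrt r)/2\}$ forces $a_i,c_i\in\Z$ and $b_i,d_i\in\frac12\Z$ with $b_i-d_i\in\Z$. Coprimality of $p,q$ gives $r=pq$ and $\sqrt p\sqrt q=\sqrt r$, so squaring and comparing coefficients with
\[
\alpha=(9+p+q)+2\sqrt p+2\sqrt q
\]
yields the trace identity
\[
\sum_i\bigl(a_i^{2}+pb_i^{2}+qc_i^{2}+pq\,d_i^{2}\bigr)=9+p+q
\]
and the three bilinear side conditions
\[
\sum_i a_ib_i+q\!\sum_i c_id_i=1,\quad \sum_i a_ic_i+p\!\sum_i b_id_i=1,\quad \sum_i a_id_i+\sum_i b_ic_i=0.
\]

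The key step is to eliminate the $d_i$. Since every summand on the left of the trace identity is nonnegative and $p,q\ge 10$, one has $pq\sum_i(2d_i)^{2}\le 4(9+p+q)$, and a quick check forces $\sum_i(2d_i)^{2}\le 1$. Moreover, the possibility $\sum_i(2d_i)^{2}=1$ -- a single $d_i=\pm\tfrac12$, all others zero -- additionally requires $pq/4\le 9+p+q$, i.e.\ $(p-4)(q-4)\le 52$, which under our hypotheses leaves only $(p,q)=(10,11)$. In the generic case (all $d_i=0$, hence all $b_i\in\Z$), set $B=\sum b_i^{2}$ and $C=\sum c_i^{2}$; the trace identity becomes $\sum a_i^{2}+pB+qC=9+p+q$. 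Neither $B$ nor $C$ can vanish, else a side condition collapses to $0=1$, and $B,C\ge 2$ is impossible since $2p+2q>9+p+q$. So $B=C=1$: a single $b_i$ and a single $c_j$ equal $\pm 1$, necessarily at distinct indices (from $\sum b_ic_i=0$), and the two off-diagonal equations then force the two corresponding $a$'s to equal $\pm 1$. The trace identity reduces to writing $7$ as a sum of three integer squares -- impossible by the classical three-square theorem.

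It remains to handle the exceptional pair $(p,q)=(10,11)$ with a single $d_i=\pm\tfrac12$. Here the trace budget collapses to $9+p+q-pq/4=\tfrac52$, while the forced half-integer $b_i$ already contributes $p\cdot\tfrac14=\tfrac52$; equality then forces $a_i=c_i=0$ for all $i$ and $b_j=0$ at every other index, but then $\sum_i a_ib_i+q\sum_i c_id_i=0$, contradicting the prescribed value $1$. The main obstacle in the whole argument is really just the careful bookkeeping around the half-integer coefficients permitted by the (B1) basis, which isolates this corner case $(10,11)$; luckily the tightness of the trace identity there makes the contradiction essentially automatic.
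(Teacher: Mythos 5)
Your proof is correct and takes essentially the same route as the paper's: compare the four coefficients of $\alpha=\sum x_i^2$, use the trace identity and total positivity to kill the $\sqrt{pq}$-coefficients and force $\sum b_i^2=\sum c_i^2=1$, identify the two summands $(\pm 1\pm\sqrt{p})^2$ and $(\pm 1\pm\sqrt{q})^2$, and finish with the three-square theorem for $7$ --- the only structural difference is that you eliminate the $d_i$ before locating a nonzero $c_i$, which spawns the corner case $(p,q)=(10,11)$ that the paper's ordering avoids and which you dispatch correctly. One small imprecision: to conclude $B=C=1$ you must exclude $B\ge 2$ (with $C\ge 1$) and $C\ge 2$ (with $B\ge 1$) separately, which follows from $2p+q>9+p+q$ and $p+2q>9+p+q$, i.e.\ $p,q>9$; the inequality you cite, $2p+2q>9+p+q$, only covers the case where both sums are at least $2$.
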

	\begin{proof}
		In this case, the integral basis is given by $\O_K=\Span_{\Z}\bigl\{1, \sqrt{p}, \sqrt{q}, \frac{\sqrt{p}+\sqrt{pq}}2\bigr\}$. Suppose
		\[
		\alpha = \sum \Bigl(a_i + b_i\sqrt{p} + c_i\sqrt{q} + d_i\frac{\sqrt{p}+\sqrt{pq}}2\Bigr)^2.
		\]
		This means
		\begin{align*}
			9+p+q+2\sqrt{p}+2\sqrt{q}  = \sum\limits \Bigl(&\bigl(a_i^2+b_i^2p+c_i^2q+d_i^2\frac{p+pq}{4}+b_id_ip\bigr)\\
			&+ (2a_ib_i+a_id_i+c_id_iq)\sqrt{p}
			+ \bigl(2a_ic_i+b_id_ip+d_i^2\frac{p}{2}\bigr)\sqrt{q}\\
			&+ \left(2b_ic_i+c_id_i+a_id_i\right)\sqrt{pq}\Bigr).
		\end{align*}
		
		By comparing coefficients, one gets the following conditions:
		
		\begin{align}
			9+p+q & = \sum a_i^2+q\sum c_i^2+p\sum\biggl(\!\Bigl(b_i+\frac{d_i}{2}\Bigr)^2+q\frac{d_i^2}{4}\biggr), \label{eq:B1c:1}\\
			2 & = 2\sum a_ib_i+\sum a_id_i+q\sum c_id_i, \label{eq:B1c:2} \\
			2 & = 2\sum a_ic_i+p\sum b_id_i+\frac{p}{2}\sum d_i^2, \label{eq:B1c:3}\\
			0 & = 2\sum b_ic_i+\sum c_id_i+\sum a_id_i. \label{eq:B1c:4}
		\end{align}
		Without loss of generality $a_i\geq0$ and we will prove the following statements:
		\begin{enumerate}[(i)]
			\item There is a nonzero $b_i$ and a nonzero $c_j$.
			\item All the coefficients $d_i$ are zero.
			\item There is exactly one nonzero coefficient $|b_i|=1$ and one nonzero coefficient $|c_j|=1$.
			\item If $c_i\neq 0$, then $b_i=0$ and vice versa.
			\item If $c_i$ is nonzero, then $a_i=c_i=1$, the same for $b_i$.
		\end{enumerate}
		The proofs follow:
		
		\textbf{(i)} Suppose $b_i = 0$ for all $i$.
		Then from (\ref{eq:B1c:4}) we get $\sum c_id_i = -\sum a_id_i$. \\
		Together with (\ref{eq:B1c:2}), this yields
		\[
		2 = q\sum c_id_i - \sum c_id_i = (q-1) \sum c_id_i,
		\]
		which is impossible as $q\geq 10$ and $\sum c_id_i$ is rational integer.
		
		If $c_i = 0$ for all $i$, we proceed similarly: From (\ref{eq:B1c:3}) follows 
		$2 = \frac{p}{2}\sum (2b_id_i+d_i^2)$, 
		which is again impossible as $p\geq 10$ and $\sum 2b_id_i+d_i^2$ is rational integer.

		\textbf{(ii)} Suppose there is some $|d_i|> 0$. Since there is also some nonzero $c_i$, from (\ref{eq:B1c:1}) we get 
		$9+p+q \geq q+ \frac{pq}{4}$.
		This is not possible for $p,q\geq 10$.
		
		\textbf{(iii)}
		We already know that there is at least one nonzero $c_j$ and $b_i$. On the other hand, equation (\ref{eq:B1c:1}) now yields 
		\[
		9+p+q \geq q\sum c_i^2+p\sum b_i^2.
		\]
		If $\sum c_i^2>1$ or $\sum b_i^2>1$ this is clearly impossible as $p,q\geq 10$.
		
		\textbf{(iv)} This is an easy consequence of  $0 = 2\sum b_ic_i$ (\ref{eq:B1c:4}) and the previous statement.

		\textbf{(v)} These are again direct consequences of $1 = \sum a_ib_i$  (\ref{eq:B1c:2}) and $1 = \sum a_ic_i$  (\ref{eq:B1c:3}) and our assumption that $a_i\geq 0$. 
		
		From the previous statements, it can be seen that the only way how to write $\alpha$ as a sum of squares is $\alpha = (1+\sqrt{p})^2+(1+\sqrt{q})^2+\sum x_i^2$, where $x_i \in \Z$. This concludes the proof as $7$ is not a sum of three squares over rational integers. 
	\end{proof}

	Exploiting the previous proposition, we are able to prove one of our main results. Observe that it covers \emph{all} fields with coprime $m,s \geq 10$ of type (B1), since if $m,s$ are coprime, they cannot both be $2$ modulo $4$.
	
	\begin{theorem}[Type (B1)] \label{th:(B1)P=7}
		Let $K=\BQ{p}{q}$ where $p\equiv 2$ and $q\equiv3 \pmod{4}$ are coprime square-free integers such that $p,q\geq 10$. Then $\P(\O_K)=7$.
		
		In particular, the following element has length $7$:
		\[
		\alpha = 7 +(1+\sqrt{p})^2 +(1+\sqrt{q})^2 + \Bigl(\frac{\sqrt{p}+\sqrt{pq}}{2}\Bigr)^2.
		\]
	\end{theorem}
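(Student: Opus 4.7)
The expression in the statement already shows $\ell(\alpha) \leq 7$, so the task is to prove $\ell(\alpha) \geq 7$. Suppose $\alpha = \sum_{i=1}^n \gamma_i^2$ with $\gamma_i = a_i + b_i\sqrt p + c_i\sqrt q + d_i\tfrac{\sqrt p + \sqrt{pq}}{2}$, $a_i,b_i,c_i,d_i \in \Z$. Comparing coefficients in the basis $(1,\sqrt p,\sqrt q,\sqrt{pq})$ of $K$ gives four equations whose left-hand sides coincide with those of (\ref{eq:B1c:1})--(\ref{eq:B1c:4}); the right-hand sides are now $9+p+q+\tfrac{p(1+q)}{4}$, $2$, $2+\tfrac{p}{2}$ and $0$, respectively. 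Put $D := \sum d_i^2$.

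The first goal is $D = 1$. The new (\ref{eq:B1c:3}) reads $2\sum a_ic_i + p\sum b_id_i = 2 + \tfrac{p}{2}(1-D)$: since $p \equiv 2\pmod 4$, the integer $p/2$ is odd, so the right-hand side is even exactly when $D$ is odd, while the left-hand side is always even; hence $D$ is odd. Rewrite (\ref{eq:B1c:1}) as $\sum a_i^2 + p\sum(b_i+d_i/2)^2 + q\sum c_i^2 + \tfrac{pq}{4}D = 9+p+q+\tfrac{p(1+q)}{4}$ and note $p(b_i+d_i/2)^2 \geq p/4$ whenever $d_i$ is odd; the case $D\geq 3$ then collapses to $pq/2 \leq 9+p+q$, which is false for $p,q \geq 10$. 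Hence $D = 1$, and after replacing some $\gamma_i$ by $-\gamma_i$ I may assume $d_1 = 1$ and $d_i = 0$ for $i \geq 2$.

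The heart of the argument is showing $\gamma_1 = \tfrac{\sqrt p + \sqrt{pq}}{2}$, i.e.\ $a_1 = b_1 = c_1 = 0$. With $D=1$, (\ref{eq:B1c:3}) yields $\sum a_ic_i = 1 - pb_1/2$ and (\ref{eq:B1c:1}) reduces to
\[
\textstyle\sum a_i^2 + p(b_1^2+b_1) + p\sum_{i\geq 2}b_i^2 + q\sum c_i^2 = 9+p+q.
\]
Values $|b_1|\geq 2$ give $p(b_1^2+b_1) \geq 2p$, leaving too little room on the right and contradicting the non-vanishing $\sum a_ic_i$; $b_1 = 1$ yields $\sum a_i^2 + q\sum c_i^2 \leq 9-p+q$, incompatible with $\sum a_ic_i = 1-p/2 \neq 0$ via Cauchy--Schwarz; and $b_1 = -1$ fails because $(\sum a_i^2)(\sum c_i^2) \geq (1+p/2)^2$ is inconsistent with $\sum a_i^2 + q\sum c_i^2 \leq 9+p+q$ for every integer $\sum c_i^2 \geq 1$, since $(1+p/2)^2 - (9+p) = p^2/4 - 8 > 0$ for $p \geq 10$. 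Hence $b_1 = 0$ and $\sum a_ic_i = 1$, so $\sum c_i^2 \geq 1$. The case $\sum c_i^2 \geq 2$ is excluded by another short computation: one derives $\sum a_i^2 + p\sum_{i\geq 2}b_i^2 \leq 9+p-q$, which forces all $b_i$ to vanish, after which (\ref{eq:B1c:2})--(\ref{eq:B1c:4}) lead to $c_1(q-1) = 2$, impossible for $q \geq 11$. Thus $\sum c_i^2 = 1$; a brief subcase analysis (on whether the unique nonzero $c_k$ has $k=1$, and whether it coincides with the unique nonzero $b_j$ for $j\geq 2$) using (\ref{eq:B1c:2}) and (\ref{eq:B1c:4}) then forces $a_1 = c_1 = 0$ and pins down two further summands $\gamma_j = \pm(1+\sqrt p)$ and $\gamma_k = \pm(1+\sqrt q)$.

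With $\gamma_1$ thus identified, $\sum_{i\geq 2}\gamma_i^2 = \alpha - \gamma_1^2 = 7 + (1+\sqrt p)^2 + (1+\sqrt q)^2$, which by Proposition~\ref{pr:B1coprime} has length $6$; hence $n - 1 \geq 6$, so $n \geq 7$. Combined with the upper bound $\P(\O_K) \leq 7$ from \cite{KY}, this yields $\P(\O_K) = \ell(\alpha) = 7$. The main obstacle is the third paragraph: pinning $\gamma_1$ down precisely, while keeping the remaining $\gamma_i$ unspecified enough for Proposition~\ref{pr:B1coprime} to be applicable, requires juggling all four coefficient equations simultaneously and dispatching a handful of sign-and-index subcases.
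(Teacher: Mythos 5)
Your proof is correct and follows essentially the same route as the paper's: comparing coefficients in the integral basis to obtain the four equations, forcing $\sum d_i^2=1$ by the parity of $p/2$ and a trace estimate, pinning the summand with $d_1=1$ down to $\frac{\sqrt{p}+\sqrt{pq}}{2}$, and concluding via Proposition~\ref{pr:B1coprime}. One small caveat: in the subcase $b_1=-1$ your stated justification $(1+p/2)^2-(9+p)>0$ only disposes of $\sum c_i^2=1$; for $\sum c_i^2\geq 2$ the combination of $\sum a_i^2\leq 9+p+q-q\sum c_i^2$ with Cauchy--Schwarz still yields a contradiction, but this requires an extra line of optimization over the integer $\sum c_i^2$ (the paper avoids the issue by first establishing $\sum c_i^2=1$ and $\sum a_i^2=9$ and only then excluding $b_1\neq 0$).
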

	\begin{proof}
		It suffices to prove that $\ell(\alpha)=7$; the rest is clear since $7$ is an upper bound on the Pythagoras number for every order of degree $4$. For a field of type (B1), we have $\O_K=\Span_{\Z}\bigl\{1, \sqrt{p}, \sqrt{q}, \frac{\sqrt{p}+\sqrt{pq}}2\bigr\}$. Suppose that $\alpha =
		9+p+q+\frac{p+pq}{4}+2\sqrt{p}+\left(2+\frac{p}{2}\right)\sqrt{q}$ is a sum of an arbitrary number of squares.
		
		Analogously to the proof of Proposition \ref{pr:B1coprime}, we denote the squares as $\bigl(a_i + b_i\sqrt{p} + c_i\sqrt{q} + d_i\frac{\sqrt{p}+\sqrt{pq}}2\bigr)^2$. By comparing coefficients, one gets the following conditions:
		
		\begin{align}
			9+p+q+\frac{p+pq}{4} & = \sum  a_i^2+q\sum c_i^2+p\sum \biggl(\!\Bigl(b_i+\frac{d_i}{2}\Bigr)^2+\frac{d_i^2q}{4}\biggr),  \label{eq:B1_7:1}\\
			2 & = 2\sum a_ib_i+\sum a_id_i+q\sum c_id_i, \label{eq:B1_7:2}\\
			2+\frac{p}{2} & = 2\sum a_ic_i+p\sum b_id_i+\frac{p}{2}\sum d_i^2, \label{eq:B1_7:3}\\
			0 & = 2\sum b_ic_i+\sum c_id_i+\sum a_id_i. \label{eq:B1_7:4}
		\end{align}
		Without loss of generality all $d_i\geq0$ and we will step by step prove the following lemmata:
		\begin{enumerate}[(i)]
			\item There exists a nonzero $b_i$ and a nonzero $c_j$.
			\item There is exactly one nonzero $d_i$, let us say $d_1$. This $d_1 = 1$ and $b_1 \in \{-1,0\}$.
			\item There exists a nonzero $b_i$ for $i>1$.
			\item There is exactly one $j$ such that $c_j$ is nonzero, and exactly one $k>1$ such that $b_k$ is nonzero. For these indices, $|c_j|=1$ and $|b_k|=1$. Also $\sum a_i^2 = 9$.
			\item $a_1 = b_1 = c_1 = 0$. Thus, the first summand is $\bigl(\frac{\sqrt{p}+\sqrt{pq}}{2}\bigr)^2$.
		\end{enumerate}
		The last lemma implies that $\ell(\alpha)=7$ since $7+(1+\sqrt{p})^2+(1+\sqrt{q})^2$ has length $6$ by Proposition \ref{pr:B1coprime}. 
		
		Now let us present the proofs of the lemmata:
		
		\textbf{(i)} This can be proven in the same way as the first part of the proof of \ref{pr:B1coprime}.
		
		\textbf{(ii)} If $d_i^2\geq 4$ for some $i$ or there are at least two $d_i = 1$, the RHS of (\ref{eq:B1_7:1}) is at least $q+\frac{pq}{2}$ (since there is a nonzero $c_i$); however, $q+\frac{pq}{2}>9+p+q+\frac{p+pq}{4}$ for $p,q \geq 10$.
		On the other hand, comparing parities of LHS and RHS of (\ref{eq:B1_7:3}), there must exist at least one odd $d_i$, implying there is exactly one nonzero $d_i$, without loss of generality $d_1 = 1$.
		
		Now if $b_1 \notin \{-1,0\}$, then $p\bigl( b_1+\frac{d_1}{2}\bigr)^2\geq \frac{9}{4}p$, which, taken together with the other known estimates (nonzero $c_i$ and $d_1=1$), makes (\ref{eq:B1_7:1}) impossible for $p\geq 10$.
		
		\textbf{(iii)} If $b_1=0$, it is clear from \textbf{(i)}, so assume $b_1=-1$ and $b_i=0$ for $i>1$.
		
		Then from (\ref{eq:B1_7:2}) we get $2 = -2a_1+a_1+c_1q$, from (\ref{eq:B1_7:4}) also $0 = -2c_1+c_1+a_1$. By adding these two equalities, we get $2 = c_1(q-1)$, which is impossible.
		
		\textbf{(iv)} This almost immediately follows from earlier results and (\ref{eq:B1_7:1}): With our knowledge, the equality can be rewritten as
		\[
		9+p+q+\frac{p+pq}{4} = \sum  a_i^2 + q\sum c_i^2 + p \frac{1+q}{4} + p\sum_{i \geq 2} b_i^2.
		\]
		Since there is at least one nonzero $c_i$, we know $\sum c_i^2 \geq 1$, and similarly  $\sum_{i \geq 2} b_i^2 \geq 1$. Thus the RHS is at least $p+q+\frac{pq+p}{4}$, which is LHS minus $9$. From this it is clear that both aforementioned sums must be exactly $1$ (implying that exactly one summand in each of them is $1$) and $\sum a_i^2 =9$.
		
		\textbf{(v)}
		From (\ref{eq:B1_7:3}) we  get $2-pb_1 = \sum 2a_ic_i$. This is impossible for $b_1\neq 0$, as one easily deduces from $p\geq 10$ and the fact that there is exactly one nonzero $|c_i|=1$ and that $\sum a_i^2 =9$.
		
		If $c_1\neq 0$, from (\ref{eq:B1_7:2}) we get $2-c_1q - a_1 =\sum_{i>1} 2a_ib_i$.
		Since $\sum a_i^2 = 9$ and $q\geq 10$, this is again impossible as there is only one nonzero $b_i$ for $i>1$, equal to $\pm1$. Thus $c_1=0$.\notabene{For $q=11$ it might seem possible at the first glance, but only if there are two $a_i$ with $|a_i|=3$. That is clearly nonsense.}
		
		From (\ref{eq:B1_7:4}) we get $a_1 = -\sum_{i>1}2b_ic_i$, so the only other possibility besides $a_1=0$ is $a_1 = \pm2$.
		
		If $a_1=-2$, equations (\ref{eq:B1_7:2}), (\ref{eq:B1_7:3}), (\ref{eq:B1_7:4}) read as $2 = \sum_{i>1}a_ib_i$, $1= \sum_{i>1} a_ic_i$, $1 = \sum_{i>1} b_ic_i$. The last one shows that $b_i \neq 0$ and $c_j\neq 0$ must happen for $i=j$, but then the first two equations imply different values of $a_i$, which is a contradiction.
		
		If $a_1=2$, we get $0 = \sum_{i>1}a_ib_i$, $1= \sum_{i>1} a_ic_i$, $-1 = \sum_{i>1} b_ic_i$, which leads to the same contradiction.
	\end{proof}
	
	The following propositions handle the fields of type (B2,3) and (B4). They can be proved using the exact same five steps as in the proof of Proposition $\ref{pr:B1coprime}$, each of these steps analogously requiring either straightforward inequalities concerning trace or a simple parity argument. As all these ideas were already illustrated, we omit these proofs.
	
	One thing to mention is that instead of using the integral basis explicitly, one can work with elements of the form $\frac{a_i}{4}+\frac{b_i}{4}\sqrt{p}+\frac{c_i}{4}\sqrt{q}+\frac{d_i}{4}\sqrt{pq}$ and impose congruence relations upon the coefficients. The structure of the proofs would remain unchanged and it can perhaps be clearer, especially for fields of type (B4). This other approach is often used throughout the rest of the paper.
	
	\begin{proposition}[Type (B2,3)] \label{pr:B23comprime}
		Let $K=\BQ{p}{q}$ where $p\equiv 2,3$ and $q\equiv 1 \pmod{4}$ are coprime square-free integers such that $p\geq 10$, $q\geq 17$. Then $\P(\O_K)\geq 6$.
		
		In particular, the following element has length $6$:
		\[
		\alpha = 7+(1+\sqrt{p})^2+\Bigl(\frac{1+\sqrt{q}}{2}\Bigr)^2.
		\]
	\end{proposition}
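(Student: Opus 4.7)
The plan is to mimic the structure of the proof of Proposition \ref{pr:B1coprime}, but adapted to the slightly different integral basis. Since in both types (B2) and (B3) the integral basis is the same, namely $\bigl\{1,\sqrt{p},\tfrac{1+\sqrt{q}}{2},\tfrac{\sqrt{p}+\sqrt{pq}}{2}\bigr\}$ (recall $r=pq$ by coprimality), every $\gamma\in\O_K$ can be written as
\[
\gamma_i = \frac{a_i+b_i\sqrt{p}+c_i\sqrt{q}+d_i\sqrt{pq}}{2}
\]
subject to the congruence conditions $a_i\equiv c_i\pmod 2$ and $b_i\equiv d_i\pmod 2$ (this is the alternative, ``denominator-$2$'' description suggested by the authors right before the proposition). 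Writing $\alpha = \sum \gamma_i^2$ and comparing the four rational coefficients of $4\alpha = (33+4p+q)+8\sqrt{p}+2\sqrt{q}$, I obtain
\begin{align*}
(\mathrm{I})\quad & \textstyle\sum(a_i^2+b_i^2p+c_i^2q+d_i^2pq) = 33+4p+q,\\
(\mathrm{II})\quad & \textstyle\sum(a_ib_i+q\,c_id_i) = 4,\\
(\mathrm{III})\quad & \textstyle\sum(a_ic_i+p\,b_id_i) = 1,\\
(\mathrm{IV})\quad & \textstyle\sum(a_id_i+b_ic_i) = 0.
\end{align*}

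Next I would run the same five-step argument as in Proposition \ref{pr:B1coprime}. (i) From $(\mathrm{I})$ and $pq\geq 130$, compared with $33+4p+q$, one gets $d_i=0$ for all $i$ since $(p-1)(q-4)>37$ under the hypotheses $p\geq 10$, $q\geq 13$. (ii) The parity condition then forces every $b_i$ to be even, so writing $b_i=2b_i'$ turns $(\mathrm{II})$ into $\sum a_ib_i'=2$ and $(\mathrm{I})$ into $A+4p\sum(b_i')^2+qC=33+4p+q$, where $A=\sum a_i^2$, $C=\sum c_i^2$. Comparing absolute traces again, $\sum(b_i')^2\geq 2$ is too large (it would force $A+qC\leq q-7$, impossible since $\sum a_ic_i=1$ forces $C\geq 1$), while $\sum(b_i')^2=0$ contradicts $\sum a_ib_i'=2$. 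Hence exactly one $b_i'$ equals $\pm 1$, say WLOG $b_1=2$ and $b_i=0$ for $i>1$. Plugging back, $(\mathrm{I})$ collapses to $A+qC=33+q$, and $C\geq 1$ together with $q\geq 13$ forces $C=1$, $A=33$.

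The rest is then forced: the unique nonzero $c_j=\pm 1$ satisfies $j\neq 1$ (otherwise $(\mathrm{IV})$ fails via $b_1c_1=\pm 2$), and WLOG $c_j=1$; then $(\mathrm{II})$ gives $a_1=2$ and $(\mathrm{III})$ gives $a_j=1$. The parity condition $a_i\equiv c_i\pmod 2$ forces all remaining $a_i$ (with $i\neq 1,j$) to be even, and these remaining $\gamma_i=a_i/2$ are ordinary rational integers. Thus the decomposition looks like
\[
\alpha = (1+\sqrt{p})^2 + \Bigl(\tfrac{1+\sqrt{q}}{2}\Bigr)^2 + \sum_{i\neq 1,j}(a_i/2)^2,
\]
and the last sum of rational-integer squares must equal $7$ (since $A-a_1^2-a_j^2=33-5=28$, divided by $4$). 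Because $7$ is not a sum of three rational squares, this tail needs at least four summands, giving $\ell(\alpha)\geq 6$. The reverse inequality is obvious from $\ell(7)\leq 4$, so $\ell(\alpha)=6$. I expect no real obstacle beyond bookkeeping; the only delicate step is verifying that step (ii) really rules out $\sum(b_i')^2\geq 2$ under the given hypotheses $p\geq 10$, $q\geq 13$, which is a short numerical check.
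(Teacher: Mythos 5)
Your overall route is exactly the one the paper intends: the authors omit this proof, stating only that it follows the same five steps as Proposition \ref{pr:B1coprime}, and your setup, the equations (I)--(IV), the elimination of the $d_i$, and the treatment of the $b_i$ via the parity $b_i\equiv d_i\pmod 2$ are all correct. There is, however, a genuine gap at the step where you claim that $A+qC=33+q$ together with $C\geq 1$ and $q\geq 13$ forces $C=1$. It does not. Since (II) gives $a_1=2$ we have $A\geq 4$, but $C=2$ yields $A=33-q$, which is admissible for $q=13,17,21,29$; and $C=3$ yields $A=33-2q$, which is admissible for $q=13$. The case $C=2$ can still be excluded, but only by a parity argument you did not make: the two indices with $c_i=\pm1$ have odd $a_i$ (because $a_i\equiv c_i\pmod 2$), so $\sum a_ic_i$ is even and cannot equal $1$ as (III) demands.

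The case $C=3$ with $q=13$ cannot be excluded at all, because the ``in particular'' claim is actually false there: taking $\gamma_1=1+\sqrt{p}$ and three further summands of the form $\frac{\pm1\pm\sqrt{13}}{2}$ satisfies (I)--(IV) and all congruence conditions, and since $7=\bigl(\frac{1+\sqrt{13}}{2}\bigr)^2+\bigl(\frac{1-\sqrt{13}}{2}\bigr)^2$ one gets
\[
\alpha=(1+\sqrt{p})^2+\Bigl(\frac{1+\sqrt{13}}{2}\Bigr)^2+\Bigl(\frac{1+\sqrt{13}}{2}\Bigr)^2+\Bigl(\frac{1-\sqrt{13}}{2}\Bigr)^2,
\]
a sum of four squares. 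The hypothesis consistent with the rest of the paper is $q\geq 17$: Proposition \ref{pr:coprime} excludes $13$ from $\{p,q\}$, and the remark preceding Proposition \ref{pr:B1coprime} makes clear the inequalities are meant to exclude exactly the fields where $\ell(7)<4$, which $q\geq 13$ fails to do. (The inequality $\P(\O_K)\geq 6$ itself still holds for $q=13$, but via the different witness of Proposition \ref{pr:sqrt13P>=6}.) Under $q\geq 17$, the case $C=3$ forces $A=33-2q<0$, your argument closes, and the remainder of your proof --- pinning down $\gamma_1=1+\sqrt{p}$, $\gamma_j=\frac{1+\sqrt{q}}{2}$, and a tail of rational integer squares summing to $7$ --- is correct.
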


	\begin{proposition}[Type (B4)] \label{pr:B4coprime}
		Let $K=\BQ{p}{q}$ where $p \equiv q \equiv 1 \pmod{4}$ are coprime square-free integers such that $p,q\geq17$. Then $\P(\O_K)\geq 6$.
		
		In particular, the following element has length $6$:
		\[
		\alpha = 7+\Bigl(\frac{1+\sqrt{p}}{2}\Bigr)^2+\Bigl(\frac{1+\sqrt{q}}{2}\Bigr)^2.
		\]
	\end{proposition}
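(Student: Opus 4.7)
The plan is to imitate the proof of Proposition~\ref{pr:B1coprime} with one essential new ingredient. Since $\gcd(p,q)=1\equiv 1\pmod 4$, the field $K$ is of type (B4a), and a direct check from the integral basis shows that an element $x=\frac{a+b\sqrt p+c\sqrt q+d\sqrt{pq}}{4}$ lies in $\O_K$ precisely when $a\equiv b\equiv c\equiv d\pmod 2$ \emph{and} $a\equiv b+c-d\pmod 4$. Assuming $\alpha=\sum x_i^2$ with $x_i=\frac{a_i+b_i\sqrt p+c_i\sqrt q+d_i\sqrt{pq}}{4}$, one multiplies through by $16$ and compares coefficients in the basis $(1,\sqrt p,\sqrt q,\sqrt{pq})$ to obtain four equations entirely parallel to (\ref{eq:B1c:1})--(\ref{eq:B1c:4}): a trace relation $120+4p+4q=\sum(a_i^2+b_i^2p+c_i^2q+d_i^2pq)$, two equations equal to $4$ (for the $\sqrt p$- and $\sqrt q$-coefficients), and a fourth equation equal to $0$.

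Steps (i) and (ii) of the (B1) scheme -- that some $b_i$ and some $c_j$ are nonzero, and that all $d_i=0$ -- carry over verbatim as trace inequalities; for step (ii) one uses $pq>120+3p+3q$ (valid since $p,q\geq 17$) to forbid any nonzero $d_i$. Once $d_i=0$, the first congruence forces $a_i,b_i,c_i$ to be even, and substituting $a_i=2a_i'$, $b_i=2b_i'$, $c_i=2c_i'$ reduces the system to
\[
\sum(a_i'^2+pb_i'^2+qc_i'^2)=30+p+q,\quad \sum a_i'b_i'=1,\quad \sum a_i'c_i'=1,\quad \sum b_i'c_i'=0,
\]
together with the surviving mod-$2$ remnant of the mod-$4$ condition, namely $a_i'\equiv b_i'+c_i'\pmod 2$.

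The main obstacle is step (iii) in these reduced variables. Writing $B=\sum b_i'^2$ and $C=\sum c_i'^2$, the trace equation becomes $p(B-1)+q(C-1)\leq 30$, so $(B,C)\in\{(1,1),(1,2),(2,1)\}$. The cases $(2,1)$ and $(1,2)$ are the one genuinely new step compared with the (B1) argument -- the trace bound alone does not exclude them when $p$ and $q$ are close to $17$, and the remnant congruence must be invoked. In $(B,C)=(2,1)$, let $i_1,i_2$ be the indices with $b_{i_k}'=\pm 1$ and $j_1$ the index with $c_{j_1}'=\pm 1$. If $j_1\in\{i_1,i_2\}$, then $\sum b_i'c_i'=\pm 1\neq 0$, contradicting orthogonality. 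Otherwise $c_{i_1}'=c_{i_2}'=0$, so the remnant forces $a_{i_1}',a_{i_2}'$ both odd; but then $\sum a_i'b_i'=\pm a_{i_1}'\pm a_{i_2}'$ is a sum of two odd integers, hence even, and cannot equal $1$. Thus $B=C=1$.

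The remaining steps (iv) and (v) are mechanical imitations of the (B1) case. Orthogonality $\sum b_i'c_i'=0$ places the single nonzero $b_{i_1}'$ and the single nonzero $c_{j_1}'$ at distinct indices, and the identities $\sum a_i'b_i'=\sum a_i'c_i'=1$ pin down $a_{i_1}'=b_{i_1}'=\pm 1$ and $a_{j_1}'=c_{j_1}'=\pm 1$ with matching signs; after absorbing sign flips into the squares one recovers the two summands $\bigl(\frac{1+\sqrt p}{2}\bigr)^2$ and $\bigl(\frac{1+\sqrt q}{2}\bigr)^2$. For every other $i$, $b_i'=c_i'=0$ and the remnant congruence then forces $a_i'$ even, so $x_i$ is a rational integer. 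The remaining integer squares must sum to $7$, and since $7$ is not a sum of three rational integer squares, at least four of them are required, giving $\ell(\alpha)\geq 6$. The reverse inequality is witnessed by $\alpha=\bigl(\frac{1+\sqrt p}{2}\bigr)^2+\bigl(\frac{1+\sqrt q}{2}\bigr)^2+2^2+1^2+1^2+1^2$.
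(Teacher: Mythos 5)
Your proof is correct and follows exactly the route the paper prescribes: the paper omits the proof of Proposition \ref{pr:B4coprime}, stating only that it uses ``the exact same five steps'' as Proposition \ref{pr:B1coprime} with quarter-integer coefficients subject to congruence conditions, which is precisely what you carry out. Your membership criterion $a\equiv b\equiv c\equiv d\pmod 2$, $a\equiv b+c-d\pmod 4$ for type (B4a) is right, and your handling of the $(B,C)=(2,1)$ case via the mod-$2$ remnant supplies the ``simple parity argument'' the paper alludes to but does not write down.
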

	
	Note that in the last proposition, the field is in fact always of type (B4a) since $\gcd(p,q) \equiv 1 \pmod4$. Let us also remark that the omitted case $p \equiv q \equiv 3$ (see Proposition \ref{pr:coprime}) seems to be somewhat harder, but we believe that $\P(\O_K)\geq 6$ holds even in that situation.

	\section{Pythagoras number is at least 5 (up to seven exceptions)} \label{se:P>=5}
	
	In this section we prove the first half of Theorem \ref{th:main2parts}, i.e.\ that $\P(\O_K) \geq 5$ unless $K = \BQ23$, $\BQ25$, $\BQ35$ (where we expect $\P(\O_K)=3$) or $K = \BQ37$, $\BQ27$, $\BQ56$, $\BQ57$  (where we expect $\P(\O_K)=4$).
	
	The structure of the proof is as follows: The fields which contain $\sqrt2$, $\sqrt3$ or $\sqrt5$ are handled in Subsection \ref{ss:235}, with the most difficult case postponed until Subsection \ref{ss:strongAsymptotics}; the fields containing $\sqrt6$ or $\sqrt7$ are discussed in Subsection \ref{ss:67}. The fields containing $\sqrt{13}$ are postponed until Proposition \ref{pr:sqrt13P>=6}, where we prove that the Pythagoras number is in fact at least $6$. For the fields which contain none of these six numbers, the easier case of $m \equiv 1 \pmod4$ is solved in Subsection \ref{ss:P>=5mEquiv1}, while Subsection \ref{ss:P>=5mEquiv23} solves a majority of the fields where $m \not\equiv 1 \pmod4$. Among these, the fields with $s=m+4$, $s=m+8$ or $s=m+12$ turn out to be the most problematic; a solution for them is provided in Subsection \ref{ss:m+4,m+8,m+12}. A formal proof of the whole Theorem \ref{th:main2parts} (\ref{it:main(1)}), including references to specific propositions rather than subsections, is situated at the very end of Section \ref{se:P>=5}.
	
	Since the proof is a bit lengthy, we first provide a hint how to easily obtain the weaker estimate $\P(\O_K)\geq 4$: By trace considerations, it is simple to show that if $\ell(7)<4$, then $K$ must contain $\sqrt2$, $\sqrt3$, $\sqrt5$, $\sqrt6$, $\sqrt7$ or $\sqrt{13}$. For fields with $m=6$, the element $10+2\sqrt{6}=1+1+1+(1+\sqrt{6})^2$ is a sum of less than four squares only in $\BQ6{10}$, and here one easily finds another counterexample. For fields with $m=7$ or $m=13$, one obtains similar results for $11+2\sqrt{7}=1+1+1+(1+\sqrt{7})^2$ and $\frac{13}{2}+\frac{1}{2}\sqrt{13}=1+1+1+\big(\frac{1+\sqrt{13}}{2}\big)^2$, respectively; only the field $\BQ{7}{11}$ has to be handled separately. It then remains to find a proof for fields containing $\sqrt{2}$, $\sqrt{3}$ or $\sqrt{5}$. This is easily done by the same method which we use in Subsection \ref{ss:67} for proving $\P(\O_K)\geq 5$ for $m=6, 7$; a more general explanation of this strategy is provided later in Subsection \ref{ss:P>=6idea}. 
	
	The rest of the section is spent solely by proving the result advertised in its title. Remember that we keep the notation $1<m<s<t$ for the three square-free integers whose square roots belong to $K$.

	\subsection{The cases where \texorpdfstring{$m \equiv 1 \pmod4$}{m is 1 mod 4}} \label{ss:P>=5mEquiv1}
	The following proposition shows that for $m \equiv 1 \pmod4$, the simplest element of length $5$ in $\Z\bigl[\frac{1+\sqrt{m}}{2}\bigr]$ has length $5$ also in $\O_K$:
	
	\begin{proposition} \label{pr:mIs1}
		Let $K = \BQ{m}{s}$ be a totally real biquadratic field with $m \equiv 1 \pmod{4}$ where $m \neq 5,13$. Then $\P(\O_K)\geq 5$.
		
		In particular, $\ell(\alpha_0)=5$ holds for
		\[
		\alpha_0 = 7 + \Bigl(\frac{1+\sqrt{m}}{2}\Bigr)^2 = 7 + \frac{1+m}{4} + \frac{\sqrt{m}}{2}.
		\]
	\end{proposition}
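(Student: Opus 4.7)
The plan is to show that in any representation $\alpha_0 = \sum_{i=1}^{n} x_i^2$ with $x_i \in \O_K$, each summand $x_i$ must actually lie in the subring $\Z\bigl[\frac{1+\sqrt{m}}{2}\bigr] \subseteq \O_K$. Once this reduction is achieved, Observation \ref{ob:betterQuadratic} yields $\ell(\alpha_0) = 5$ in the quadratic order, and hence the same lower bound in $\O_K$. The hypotheses ($m \equiv 1 \pmod 4$, $m$ square-free, $m \neq 5, 13$) force $m \geq 17$, which is precisely the range where Observation \ref{ob:betterQuadratic} applies.

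To obtain the reduction, I would write $x_i = \frac{a_i + b_i\sqrt{m} + c_i\sqrt{s} + d_i\sqrt{t}}{4}$ with $a_i, b_i, c_i, d_i \in \Z$ subject to the congruence conditions corresponding to the type of $K$ (determined by $s \bmod 4$, one of (B2), (B3), (B4a), (B4b)). Expanding $x_i^2$ via $\sqrt{ms} = r_0\sqrt{t}$, $\sqrt{mt} = q_0\sqrt{s}$, $\sqrt{st} = p_0\sqrt{m}$, and comparing coefficients with $\alpha_0 = \frac{29+m}{4} + \frac{1}{2}\sqrt{m}$ in the basis $(1, \sqrt{m}, \sqrt{s}, \sqrt{t})$ produces
\begin{align*}
\sum (a_i^2 + m b_i^2 + s c_i^2 + t d_i^2) &= 4m + 116, \\
\sum (a_i b_i + p_0\, c_i d_i) &= 4, \\
\sum (a_i c_i + q_0\, b_i d_i) &= 0, \\
\sum (a_i d_i + r_0\, b_i c_i) &= 0.
\end{align*}
The trace equation gives $s\sum c_i^2 + t\sum d_i^2 \leq 4m + 116$; together with $t = ms/\gcd(m,s)^2$ this forces $d_i = 0$ for all $i$ whenever $t > 4m + 116$, which for $m \geq 17$ leaves only a handful of small $(m,s)$ with $\gcd(m,s) > 1$ in play. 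Once $d_i = 0$, the last two equations collapse to $\sum a_ic_i = \sum b_ic_i = 0$ while the second becomes $\sum a_ib_i = 4$; Cauchy--Schwarz applied to integer vectors with this fixed inner product (minimum attained at a single nonzero coordinate with $|a|=4,\ |b|=1$) gives $\sum a_i^2 + m\sum b_i^2 \geq m + 16$, so $s\sum c_i^2 \leq 3m + 100$. Therefore $c_i = 0$ for every $i$ once $s > 3m + 100$; the parity constraints built into the integral basis (for instance in types (B2) and (B3) every $c_i$ is even, hence $\sum c_i^2 \geq 4$ whenever nonzero) tighten this threshold substantially.

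After this pruning, only finitely many pairs $(m,s)$ remain, corresponding to $m \in \{17, 21\}$ and a few small values of $s$, namely $(17,19), (17,21), (17,22), (21,22), (21,23)$. These are precisely the pairs listed in Lemma \ref{le:comp}(\ref{it:mIs1}), where $\ell(\alpha_0) = 5$ is verified directly by the algorithm of Subsection \ref{ss:algorithms}. The main obstacle is extracting exactly this tight list of exceptions: it requires a careful case-by-case walk through types (B2), (B3), (B4a), (B4b), exploiting the congruences on $(a_i, b_i, c_i, d_i)$ together with the divisibility relations modulo $q_0$ and $r_0$ imposed by the vanishing of the $\sqrt{s}$ and $\sqrt{t}$ coefficients. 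Everything else reduces cleanly to Observation \ref{ob:betterQuadratic}.
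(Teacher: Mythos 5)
Your overall plan (force every summand of a putative short representation into $\Z\bigl[\frac{1+\sqrt{m}}{2}\bigr]$ and then quote the quadratic result) is the same as the paper's, but the quantitative middle of your argument has a genuine gap in two places. First, the assertion that $t>4m+116$ fails only for \enquote{a handful of small $(m,s)$} is false: since $t/s=m/\gcd(m,s)^2$, the ratio is close to $1$ whenever $\gcd(m,s)$ is large compared with $\sqrt{m}$, and then $t$ stays below $4m+116$ for arbitrarily large $m$. For instance $m=957=29\cdot 33$, $s=29\cdot 37=1073$, $t=33\cdot 37=1221<3944=4m+116$; one such field exists for every sufficiently large value of $\gcd(m,s)$, so the set you would have to treat separately is infinite. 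Second, and more seriously, even after $d_i=0$ is secured, the threshold $s>3m+100$ leaves unresolved, for \emph{every} admissible $m$, all fields with $m<s\leq 3m+100$ -- again an infinite family -- and nothing you write actually collapses this to the five pairs you list; the jump from \enquote{tighten this threshold substantially} to the exact list $(17,19),(17,21),(17,22),(21,22),(21,23)$ is unjustified.

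The missing ingredient is the paper's use of Lemma \ref{le:KTZ}: if only one summand $x_1$ lay outside $\Q(\sqrt{m})$, then $x_1^2\in\Q(\sqrt{m})$ would force $x_1=k\sqrt{s}+l\sqrt{t}$, whose square has absolute trace at least $\min\bigl(s,\frac{s+t}{4}\bigr)$, already larger than $\atr\alpha_0=\frac{29+m}{4}$. Hence at least \emph{two} summands lie outside $\Q(\sqrt{m})$, each contributing at least $\frac{1+s}{4}$ to the absolute trace; this doubles your lower bound and yields $27+m\geq 2s$, which is what actually reduces the exceptional set to the five computer-checked fields of Lemma \ref{le:comp}~(\ref{it:mIs1}). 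Relatedly, in type (B4) your parametrisation admits summands $\frac{a+b\sqrt{m}+c\sqrt{s}+d\sqrt{t}}{4}$ with all coefficients odd, and the trace inequality alone does not exclude them; the paper invokes Lemma \ref{le:quarterSquares} to show that such a summand must be accompanied by a second one of the same shape, giving $s+t\leq 57+m$ and hence $t\leq 56$, which is impossible for type (B4). Without these two ideas your pruning does not terminate in a finite, let alone correct, exceptional list.
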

	\begin{proof}
		Peters \cite{Pe} proved that $\alpha_0$ cannot be expressed as a sum of four squares in $\O_{\Q(\!\sqrt{m})}$, see Theorem \ref{th:quadratic}. Therefore, if $\alpha_0 = \sum_{i=1}^{4}x_i^2$ in $\O_K$, we can assume $x_1 \notin \Q(\!\sqrt{m})$.
		
		First suppose that $x_i \in \Q(\!\sqrt{m})$ for $i\neq 1$. Then $x_1^2 \in \Q(\!\sqrt{m})$, hence by Lemma \ref{le:KTZ} $x_1$ has the form $k\sqrt{s}+l\sqrt{t}$ for $k,l\in\Q$. However, these elements have a big trace: To minimise the trace of $x_1^2$, one has to choose either $x_1 = \sqrt{s}$ or $x_1=\frac{\sqrt{s}\pm\sqrt{t}}{2}$. Therefore, by comparing traces of $\alpha_0$ and $x_1^2$, we obtain one of the inequalities
		\[
		7 + \frac{1+m}{4} \geq s \quad \text{or} \quad 7 + \frac{1+m}{4} \geq \frac{s+t}{4}.
		\]
		The first inequality is nonsense since $s\geq m+1$. As for the second, it clearly implies $t \leq 28$ which holds only for finitely many biquadratic fields; and none of them satisfies $m \equiv 1 \pmod{4}$.
		
		Therefore we can assume $x_1,x_2\notin \Q(\!\sqrt{m})$. We distinguish two cases. In the first, both $x_1$ and $x_2$ are of the form $\frac{a+b\sqrt{m}+c\sqrt{s}+d\sqrt{t}}{2}$ for $a,b,c,d \in \Z$, i.e.\ \enquote{no quarter-integers are involved}. Then $\atr(x_1^2+x_2^2) \geq 2\atr\bigl(\frac{1+\sqrt{s}}{2}\bigr)^2$, yielding
		\[
		7 + \frac{1+m}{4} \geq 2\frac{1+s}{4},
		\]
		i.e.\ $27 + m \geq 2s$. This, together with our conditions on $m$, holds only for the five fields $\BQ{17}{19}$, $\BQ{17}{21}$, $\BQ{17}{22}$, $\BQ{21}{22}$, $\BQ{21}{23}$; these fields were handled by our computer program, see Lemma \ref{le:comp} (\ref{it:mIs1}).
		
		In the other case, one of $x_i$ is of the form $\frac{a+b\sqrt{m}+c\sqrt{s}+d\sqrt{t}}{4}$ with $a,b,c,d$ odd. Then, by Lemma \ref{le:quarterSquares}, $x_i^2$ is of the same form; hence, to get $\alpha_0$, at least one other summand must take the same form. Comparing traces, this yields
		\[
		7 + \frac{1+m}{4} \geq 2 \frac{1+m+s+t}{16},
		\]
		i.e.\ $57+m\geq s+t$, implying $56\geq t$. This is impossible, since quarter-integers can only occur for fields of type (B4), and the smallest value of $t$ in such a field is $65$.
	\end{proof}

	\subsection{The cases where \texorpdfstring{$m \not\equiv 1 \pmod4$}{m is not 1 mod 4}} \label{ss:P>=5mEquiv23}
	
	If we consider the fields where $m \not\equiv 1 \pmod4$, it is natural to ask whether $\alpha_0 = 7 + (1+\sqrt{m})^2$ satisfies $\ell(\alpha_0)=5$. Usually, the answer is positive. However, in contrast with the analogous question for $m\equiv 1$, there are infinitely many fields where this is not the case:
	
	\begin{observation} \label{ob:m+4etcIsAProblem}
		In a field $\BQ{m}{s}$ where $s=m+4$, $s=m+8$ or $s=m+12$, four squares suffice to represent $\alpha_0 = 7 + (1+\sqrt{m})^2 = (8+m) + 2\sqrt{m}$. More specifically, denote $y_1 = 1 + \frac{\sqrt{m}+\sqrt{s}}{2}$ and $y_2 = 1 + \frac{\sqrt{m}-\sqrt{s}}{2}$. Then
		\[
		\alpha_0 = \begin{cases}
			y_1^2+y_2^2+2^2 & \text{if $s = m + 4$},\\
			y_1^2+y_2^2+1^2+1^2 & \text{if $s = m + 8$},\\
			y_1^2+y_2^2 & \text{if $s = m + 12$},
		\end{cases}
		\]
		as one easily sees from the general identity $y_1^2+y_2^2 = 2 + \frac{m+s}{2} + 2\sqrt{m}$.
	\end{observation}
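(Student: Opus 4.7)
My approach is a direct verification, following the identity already indicated in the statement. The plan breaks into three short steps, and there is essentially no substantial obstacle.

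First I would confirm that $y_1, y_2 \in \O_K$. Since $s - m \in \{4, 8, 12\}$, we automatically have $m \equiv s \pmod{4}$; moreover, as we are in the regime $m \not\equiv 1 \pmod{4}$, both $m$ and $s$ lie in the same residue class ($2$ or $3$) modulo $4$. Consulting the integral basis tables from Subsection \ref{ss:biquadratic}, after relabeling the generators so that $p \equiv r \pmod{4}$ (one can take $\{p,r\} = \{m,s\}$), the element $\frac{\sqrt{p}+\sqrt{r}}{2} = \frac{\sqrt{m}+\sqrt{s}}{2}$ appears in the integral basis in each of the types (B1), (B2), (B3). Hence $\frac{\sqrt{m}\pm\sqrt{s}}{2} \in \O_K$, so $y_1, y_2 \in \O_K$.

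Next I would expand $y_1^2 + y_2^2$ directly. The terms $\pm\sqrt{s}$ and the cross-term $\pm\frac{\sqrt{ms}}{2}$ each appear with opposite signs in $y_1^2$ and $y_2^2$, so they cancel in the sum, leaving
\[
y_1^2 + y_2^2 = 2 + 2\sqrt{m} + \frac{m+s}{2}.
\]
This is the general identity advertised in the observation. Comparing with $\alpha_0 = 8 + m + 2\sqrt{m}$, I obtain
\[
\alpha_0 - (y_1^2 + y_2^2) = 6 + m - \frac{m+s}{2} = \frac{12 + m - s}{2},
\]
which equals $4$, $2$, or $0$ when $s = m+4$, $m+8$, or $m+12$, respectively. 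Writing $4 = 2^2$ and $2 = 1^2 + 1^2$ yields the three announced decompositions, each involving at most four squares.

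The hard part, if any, is only bookkeeping: confirming the integrality of $y_1$ and $y_2$, which reduces to the parity observation $m \equiv s \pmod 4$ together with a glance at the integral basis tables. Once this is settled, the rest is a one-line algebraic identity.
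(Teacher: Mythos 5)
Your proposal is correct and follows essentially the same route as the paper: the paper likewise reduces everything to the identity $y_1^2+y_2^2 = 2 + \frac{m+s}{2} + 2\sqrt{m}$ and notes that the remainder $\frac{12+m-s}{2}$ equals $4$, $2$ or $0$ in the three cases. Your additional check that $\frac{\sqrt{m}\pm\sqrt{s}}{2}\in\O_K$ (via $m\equiv s\pmod 4$ and the integral basis tables) is a routine detail the paper leaves implicit, and it is carried out correctly.
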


	The three families of fields which were shown to be problematic in Observation \ref{ob:m+4etcIsAProblem} will be handled in the next subsection. The aim of this subsection is to prove that when one excludes these three families (and of course supposes $m \neq 2,3,6,7$), then five squares are indeed needed for $\alpha_0$ in all but three fields.
	
	\begin{proposition} \label{pr:mNot1Pis5}
		Let $K = \BQ{m}{s}$ be a totally real biquadratic field with $m \not\equiv 1 \pmod{4}$ such that $\P(\O_F)=5$ for $F=\Q(\!\sqrt{m})$. (Namely $m \neq 2,3,6,7$.) Assume further that $s \neq m+4,m+8,m+12$. Then $\P(\O_K)\geq 5$.
		
		In particular, the length of
		\[
		\alpha_0 = 7 + (1+\sqrt{m})^2 = (8+m) + 2\sqrt{m}
		\]
		is $5$ unless $K=\BQ{10}{13}$, $\BQ{11}{13}$ or $\BQ{30}{35}$. In the first two of these three fields, $\ell\Bigl(3+(\frac{1+\sqrt{13}}{2})^2+(1+\frac{1+\sqrt{13}}{2})^2\Bigr)=5$; in the last, $\ell\Bigl(7 + (1+\sqrt{42})^2\Bigr)=5$.
	\end{proposition}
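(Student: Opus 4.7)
The plan is to follow the same blueprint as Proposition \ref{pr:mIs1}: suppose for contradiction that $\alpha_0 = \sum_{i=1}^{4} x_i^2$ in $\O_K$. Since $\alpha_0$ already requires five squares in $\O_F$ for $F = \Q(\sqrt m)$ (Theorem \ref{th:quadratic} together with the hypothesis $\P(\O_F)=5$), at least one $x_i$ must lie outside $F$; by symmetry assume $x_1 \notin F$.

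If additionally $x_2, x_3, x_4 \in F$, then $x_1^2 = \alpha_0 - x_2^2 - x_3^2 - x_4^2 \in F$, so Lemma \ref{le:KTZ} gives $x_1 = k\sqrt s + l\sqrt t$ with $k, l \in \Q$. Among nonzero elements of $\O_K$ of this form, the minimum of $\atr x_1^2$ is attained either at $\sqrt s$ (giving $s$) or, when $\tfrac{\sqrt s + \sqrt t}{2}$ happens to lie in $\O_K$, at that element (giving $\tfrac{s+t}{4}$). Comparing with $\atr \alpha_0 = m+8$ forces either $s \leq m+8$ or $s+t \leq 4m+32$, leaving only finitely many candidate fields.

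If instead at least two of the $x_i$ lie outside $F$, then $\atr \alpha_0 \geq 2\mu$, where $\mu$ is the minimum of $\atr y^2$ for $y \in \O_K \setminus F$. The hypothesis $m \not\equiv 1 \pmod 4$ rules out type (B4), so $\mu$ is realised by one of $\sqrt s$, $\tfrac{1+\sqrt s}{2}$, $\tfrac{\sqrt m + \sqrt s}{2}$, $\tfrac{\sqrt m + \sqrt t}{2}$ or $\tfrac{\sqrt s + \sqrt t}{2}$, depending on the congruences of $m, s, t$ modulo $4$ that determine the integral basis. In every case the inequality $m+8 \geq 2\mu$ turns into a linear bound of the form $s \leq Cm+D$ (or an analogous bound on $s+t$), again leaving only a finite list of candidate fields.

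The surviving pairs $(m, s)$ form exactly the list of Lemma \ref{le:comp}(\ref{it:mNot1Gen}), which is handled by the algorithm of Subsection \ref{ss:algorithms}. For the three genuinely exceptional fields $\BQ{10}{13}$, $\BQ{11}{13}$, $\BQ{30}{35}$, the specific element $\alpha_0$ can be written with fewer squares and is therefore useless, so we replace it: the first two fields contain $\sqrt{13}$, and Theorem \ref{th:quadratic} offers $3+\bigl(\tfrac{1+\sqrt{13}}{2}\bigr)^2+\bigl(1+\tfrac{1+\sqrt{13}}{2}\bigr)^2$ as an element of length $5$ already in $\O_{\Q(\sqrt{13})}$, and Lemma \ref{le:comp}(\ref{it:threeexceptions}) confirms that this length persists in the larger $\O_K$; in $\BQ{30}{35}$, one simply reruns the $\alpha_0$-argument with the role of $m$ taken by $t=42$, again covered by Lemma \ref{le:comp}(\ref{it:threeexceptions}). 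The main obstacle is the bookkeeping in the second case: the precise value of $\mu$ and the resulting list of surviving pairs $(m,s)$ depend delicately on the congruences of $m, s, t$ modulo $4$, which is what produces the sizeable list appearing in Lemma \ref{le:comp}(\ref{it:mNot1Gen}).
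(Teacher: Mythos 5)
Your overall architecture (assume $\alpha_0=\sum_{i=1}^4x_i^2$, split on how many $x_i$ lie outside $F=\Q(\sqrt m)$, reduce to a finite computer-checked list) matches the paper's, but the reduction step fails: the trace inequalities you derive do \emph{not} leave finitely many fields. In your first case the bound is $s\leq m+8$, and in your second case the bounds are of the shape $s\leq m+16$ (when $\mu=\frac{m+s}{4}$) or $s\leq 2m+15$ (when $\mu=\frac{1+s}{4}$); each of these constrains only $s-m$ or $s/m$, not $m$ itself, so each is satisfied by infinitely many pairs $(m,s)$. A finite list such as the one in Lemma \ref{le:comp}(\ref{it:mNot1Gen}) only comes out of inequalities like $4s\leq 31+3m$ or $2s\leq 31+m$, which together with $s>m$ force $m$ to be bounded; your inequalities are strictly weaker. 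A symptom of the same problem is that your argument never invokes the hypothesis $s\neq m+4,m+8,m+12$, yet Observation \ref{ob:m+4etcIsAProblem} exhibits, for those infinitely many fields, a representation of $\alpha_0$ by at most four squares using exactly two summands outside $F$ (namely $1+\frac{\sqrt m\pm\sqrt s}{2}$). Any proof that claims to dispose of the ``two summands outside $F$'' configuration for all but finitely many fields without using that hypothesis must therefore be incorrect.

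What the paper actually does to close this gap is to extract more than the trace. Writing $x_i=\frac{a_i+b_i\sqrt m+c_i\sqrt s+d_i\sqrt t}{2}$, the coefficient of $\sqrt m$ in $\alpha_0$ forces some $a_kb_k\neq0$, which adds $1+m$ (rather than just $0$) to the lower bound for $\sum a_i^2+m\sum b_i^2$ and sharpens every subsequent trace estimate; then the counts $B,C,D$ of nonzero $b_i,c_i,d_i$ are pinned down and most combinations are killed by the parity constraints coming from the integral basis (e.g.\ $B=1$, $C=2$, $D=0$ would require two of $\frac{1+\sqrt m}{2},\frac{1+\sqrt s}{2},\frac{\sqrt m+\sqrt s}{2}$ to be integral). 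The two surviving configurations need the full system of four coefficient equations: $B=C=D=1$ leads to a contradiction among the equations for the $\sqrt m,\sqrt s,\sqrt t$ coefficients, and $B=C=2$ forces $m\equiv s\pmod4$ together with $s-m\leq\frac{31}{2}$, i.e.\ $s\in\{m+4,m+8,m+12\}$ --- which is exactly where the excluded-family hypothesis is consumed. None of this is recoverable from the single inequality $\atr\alpha_0\geq 2\mu$. (Your treatment of the three exceptional fields and of the case ``only $x_1\notin F$'' is closer to the mark, though even there the bound $s\leq m+8$ alone is insufficient; one additionally needs that $\alpha_0-x_1^2$ keeps an odd coefficient at $\sqrt m$ or lands outside $\Z[2\sqrt m]$, as in claim (iii) of the paper's proof.)
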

	\begin{proof}
		We start by defining three sets of exceptional fields for which the following proof does not work and they must be handled separately. For convenience, denote by $\K$ the set of all fields for which we aim to prove the theorem, i.e.\ fields where $m \not\equiv 1 \pmod4$, $m \neq 2,3,6,7$ and $s \neq m+4,m+8,m+12$. Now, the problematic fields will be defined by the following inequalities:
		\begin{align*}
			\K_1 &= \{K \in \K : 31+3m \geq 4s\},\\
			\K_2 &= \{K \in \K : 31+2m \geq s+t\},\\
			\K_3 &= \{K \in \K : 31+m \geq 2s\}.
		\end{align*}
		Clearly $\K_1 \subset \K_3$. It is also not very difficult to see that all these sets are finite; this is done in Lemma \ref{le:listsOfExceptions}. Once the lists are proven to be finite, a computer program can be applied to solve them; this was done in Lemma \ref{le:comp} (\ref{it:mNot1Gen}) and (\ref{it:threeexceptions}). (Note that the three specific fields listed in the statement of Proposition \ref{pr:mNot1Pis5} belong to $\K_2$ or $\K_3$.) Having gotten this out of the way, let us now proceed with the general proof:
		
		There are several possibilities regarding the integral basis; however, we shall perform the proof for all of them simultaneously. We assume $\alpha_0 = \sum x_i^2$, where
		\[
		x_i = \frac{a_i + b_i\sqrt{m} + c_i\sqrt{s} + d_i\sqrt{t}}{2}, \qquad \text{with $a_i+b_i+c_i+d_i \equiv 0 \pmod2$}.
		\]
		Depending on the integral basis, further conditions are imposed on the coefficients, but we shall repeat those directly when using them. As usual, the core of the proof is comparing of traces; however, in this case, it will not give us enough information. Therefore we start with another observation, which will then make the comparing of traces more efficient.
		
		Since $\alpha_0$ is not a sum of four squares in $\Z[\sqrt{m}]$, we can assume that at least one of $x_i$ is not in $\Z[\sqrt{m}]$. Now we proceed to prove a sequence of claims which further restrict the possible values of all the coefficients $b_i,c_i,d_i$:
		
		\begin{enumerate}[label=(\roman*)]
			\item At least one product $a_kb_k \neq 0$. In particular, $\sum a_i^2 + m\sum b_i^2 \geq 1 + m$.
			\item With the exception of fields $K \in \K_1$, we have $\sum c_i^2 + \sum d_i^2 \leq 3$. In particular, $c_i$ and $d_i$ only take the values $0, \pm1$.
			\item With the same fields $K \in \K_1$ excluded, there are at least two $x_i \notin \Z[\sqrt{m}]$. This also means $\sum c_i^2 + \sum d_i^2 \geq 2$.
			\item If $K \notin \K_1 \cup \K_2$, the following holds: Either all $d_i = 0$, or $\sum b_i^2 + \sum c_i^2 + \sum d_i^2 \leq 3$.
			\item If $K \notin \K_1 \cup \K_3$, we have $\sum b_i^2 + \sum c_i^2 + \sum d_i^2 \leq 4$. In particular, $\sum b_i^2 \leq 2$, so $b_i$ can only take values $0, \pm 1$.
		\end{enumerate}
		
		The proofs follow:
		
		\textbf{(i)} Comparing the coefficients in front of $\sqrt{m}$ yields the equality $2 = 2\bigl(\sum \frac{a_ib_i}{4} + m_0\sum\frac{c_id_i}{4}\bigr)$. Assuming that $a_ib_i=0$ for all indices, we get $4 = m_0 \sum c_id_i$, implying $m_0 \mid 4$. Since $m_0$ is square-free and at least $3$, this is impossible.
		
		\textbf{(ii)} By comparing traces, exploiting the previous claim, we get
		\[
		32 + 4m = 4\atr\alpha = \sum a_i^2 + m\sum b_i^2 + s\sum c_i^2 + t\sum d_i^2 \geq 1 + m + s\Bigl(\sum c_i^2 + \sum d_i^2\Bigr).
		\]
		For the sake of contradiction, assume $\sum c_i^2 + \sum d_i^2 \geq 4$. This yields $31 + 3m \geq 4s$, which is the definition of $K \in \K_1$.
		
		\textbf{(iii)} We already know that at least one $x_i \notin \Z[\sqrt{m}]$. Suppose it were the only one -- say, $x_1$. Then $x_1^2 \in \Z[\sqrt{m}]$, which, by Lemma \ref{le:KTZ}, means $x_1 = \frac{c_1\sqrt{s}+d_1\sqrt{t}}{2}$. \notabene{One can note that since $m\not\equiv 1$, this is an algebraic integer if and only if the field is of type (B1) with $s\equiv t \equiv m_0 \equiv 2 \pmod4$. But the given argument makes no use of this.}
		
		Combining the previous claim with $c_1\equiv d_1 \pmod2$, we get $|c_1|=|d_1|=1$. Thus there are only two possibilities: $x_1^2 = \frac{s+t}{4} \pm \frac{m_0}{2}\sqrt{m}$. Although for $m_0 \equiv 2 \pmod4$ this indeed belongs to $\Z[\sqrt{m}]$, it can never belong to $\Z[2\sqrt{m}]$. Thus $\alpha_0 - x_1^2$ cannot be a sum of squares in $\Z[\sqrt{m}]$, a contradiction.
		
		\textbf{(iv)} To arrive at a contradiction, assume that $\sum b_i^2 + \sum c_i^2 + \sum d_i^2 \geq 4$ where at least one $d_i$ is nonzero. Since $K \notin \K_1$, by the previous claim we also know $\sum c_i^2 + \sum d_i^2 \geq 2$. Putting these pieces of information together, we have $m\sum b_i^2 + s\sum c_i^2 + t \sum d_i^2 \geq 2m + s + t$. Thus
		\[
		32 + 4m \geq 1 + m\sum b_i^2 + s\sum c_i^2 + t \sum d_i^2 \geq 1 + 2m + s + t,
		\]
		i.e.\ $31 + 2m \geq s + t$. This is the definition of $K \in \K_2$.
		
		\textbf{(v)} For the sake of contradiction, assume $\sum b_i^2 + \sum c_i^2 + \sum d_i^2 \geq 5$. By (iii) we also know $\sum c_i^2 + \sum d_i^2 \geq 2$. Putting these pieces of information together, we have $m\sum b_i^2 + s\sum c_i^2 + t \sum d_i^2 \geq 3m + 2s$. Thus
		\[
		32 + 4m \geq 1 + m\sum b_i^2 + s\sum c_i^2 + t \sum d_i^2 \geq 1 + 3m + 2s,
		\]
		i.e.\ $31 + m \geq 2s$. This is the definition of $K \in \K_3$. Now $\sum b_i^2 \leq 2$ follows using claim (iii).
		
		Now, assuming $K\notin \K_1 \cup \K_2 \cup \K_3$, the five proven claims provide very strong restrictions for the possible values of $|b_i|,|c_i|,|d_i|$. One sees that all of them are only zeros or ones. To simplify the following discussion, denote by $B,C,D$ the number of nonzero $b_i,c_i,d_i$. If $D \neq 0$, then by claim (iv) $B+C+D \leq 3$ and by claims (i) and (iii) $B\geq 1$ and $C+D \geq 2$. This gives rise to two possibilities: $B=C=D=1$ or $B=1, C=0, D=2$. On the other hand, if $D = 0$, then claims (i) and (v) give $1 \leq B \leq 2$ and claims (ii) and (iii) yield $2 \leq C \leq 3$. These are four more possibilities.
		
		Of these latter four possibilities, three can be excluded based on examining the integral bases: Since $D=0$, all $d_i$ are zero, hence even. Since $B,C \neq 0$, there are odd values of $b_i$ and of $c_i$. Unless $B=C=2$, the values of $B$ and $C$ are different, so there is some index $i$ where exactly one of $b_i$ and $c_i$ is odd; therefore $a_i$ is odd as well. Hence $a_i,b_i,c_i$ all attain odd values in our summands, while $d_i$ does not. However, this requires at least two of the elements $\frac{1+\sqrt{m}}{2}$, $\frac{1+\sqrt{s}}{2}$, $\frac{\sqrt{m}+\sqrt{s}}{2}$ to be algebraic integers, and this is impossible (it holds only for fields of type (B4), but there $m \equiv 1 \pmod4$).
		
		So of the cases where $D=0$, only $B=C=2$ remains to be solved. Here all $d_i$ are even, while $b_i$ and $c_i$ attain also odd values. In order not to get a contradiction as in the last paragraph, they must attain these values simultaneously, so the integral basis must contain $\frac{\sqrt{m}+\sqrt{s}}{2}$. This happens if and only if $m \equiv s \pmod4$. Now we compare traces one last time, using also the existence of $a_k\neq0$:
		\[
		32 + 4m = \sum a_i^2 + 2m + 2s \geq 1 + 2m + 2s,
		\]
		which can be rewritten as $s - m \leq \frac{31}{2} < 16$. The condition $m \equiv s \pmod4$ now yields $s = m+4, m+8$ or $m+12$, in which cases nontrivial decompositions indeed exist.
		
		The last two cases to be handled are $B=C=D=1$ and $B=1, C=0, D=2$. Let us tackle the latter. There are two odd $d_i$, while only one $b_i$ is odd and all $c_i$ are even. To satisfy the parity condition $a_i+b_i+c_i+d_i \equiv 0 \pmod2$ for both indices where $d_i \neq 0$, at least one $a_i$ must also be odd. This leads to a contradiction similar to one we already saw: For $m\not\equiv 1$, at most one of the elements $\frac{1+\sqrt{m}}{2}$, $\frac{1+\sqrt{t}}{2}$, $\frac{\sqrt{m}+\sqrt{t}}{2}$ can be an algebraic integer.
		
		The last remaining case is $B=C=D=1$. (Here the parity condition is easily satisfied: Either one or three of $a_i$ are odd.) By claim (iii) there are two different summands $x_i\notin \Z[\sqrt{m}]$, so we can assume $c_1=1$, $d_2=1$ (and all the other $c_i, d_i = 0$). There is exactly one nonzero $b_i$, let us denote it by $b_k = \pm 1$. First we realise that $k=1$ or $k=2$: If not, then to satisfy the parity condition, all of $a_1,a_2,a_k$ are odd and all the elements $\frac{1+\sqrt{m}}{2}$, $\frac{1+\sqrt{s}}{2}$, $\frac{1+\sqrt{t}}{2}$ are integral; this is nonsense. So indeed $k=1$ or $k=2$. To conclude, let us consider three of the four equations obtained by comparing $\alpha_0$ with $\sum x_i^2$ coefficient by coefficient:
		\begin{align*}
			4 &= \sum a_ib_i + m_0\sum c_id_i = \pm a_k\\
			0 &= \sum a_ic_i + s_0\sum b_id_i = a_1 + s_0 b_2\\
			0 &= \sum a_id_i + t_0\sum b_ic_i = a_2 + t_0 b_1.
		\end{align*}
		If $k=2$, then $b_1=0$, so the last equation gives $a_k=a_2=0$, which contradicts the first equation. Similarly, if $k=1$, then the second equation contradicts the first.
		
		Thus the proof is concluded (except for the cases $\K_1$, $\K_2$, $\K_3$, which are handled below).
	\end{proof}
	
	Now we handle the exceptional cases for which the inequalities in the previous proof were not good enough, thus completing the proof.
	
	\begin{lemma}\label{le:listsOfExceptions}
		Denote by $\K$ the set of all biquadratic fields where $m \not\equiv 1 \pmod4$, $m \neq 2,3,6,7$ and $s \neq m+4,m+8,m+12$. Then the following sets are finite:
		\begin{align*}
			\K_1 &= \{K \in \K : 31+3m \geq 4s\},\\
			\K_2 &= \{K \in \K : 31+2m \geq s+t\},\\
			\K_3 &= \{K \in \K : 31+m \geq 2s\}
		\end{align*}
		and for each $K \in \K_1 \cup \K_2 \cup \K_3$, the claim of the previous proposition holds: Namely, $\alpha_0 = 7 + (1+\sqrt{m})^2$ has length $5$ in $\O_K$ unless $K$ is one of the three exceptional fields; these three fields contain another element of length $5$, defined in the previous proposition.
	\end{lemma}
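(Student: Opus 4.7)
The plan is to separate finiteness from the length verification. For finiteness, $\K_3$ is immediate: combining $31 + m \geq 2s$ with $s > m$ forces $m \leq 30$, hence $s \leq 30$, and only finitely many square-free pairs remain. The inclusion $\K_1 \subseteq \K_3$ follows because $4s \leq 31 + 3m$ yields $2s \leq (31+3m)/2 \leq 31 + m$ whenever $m \leq 31$, and this holds throughout $\K_1$ since $4m < 4s \leq 31 + 3m$ forces $m < 31$.

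For $\K_2$ a more delicate argument is required, since a priori $t$ can be as small as $s+1$. Setting $d = \gcd(m,s)$ and using the explicit formula $t = ms/d^2$, the condition $s + t \leq 31 + 2m$ becomes
\[
s(d^2 + m) \leq d^2(31 + 2m).
\]
Since $s$ is a multiple of $d$ strictly greater than $m$, one has $s \geq m + d$, so $(m+d)(d^2 + m) \leq d^2(31 + 2m)$, which rearranges to $m(m - d(d-1)) \leq d^2(31 - d)$. Combined with the constraint $m > d^2$ forced by $t > s$, this bounds $d$ (one checks $d \leq 15$) and, for each admissible $d$, both $m$ and $s$. A short case analysis over $d \in \{1,2,3,5,6,7,10,11,13,14,15\}$, respecting the conditions $m \geq 10$, $m$ square-free, $m \not\equiv 1 \pmod 4$, $m \notin \{6,7\}$, and $s \notin \{m+4, m+8, m+12\}$, produces a very short explicit list of fields in $\K_2 \cap \K$.

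With finite lists in hand, the length claim reduces to a computational check. For every field in $(\K_1 \cup \K_3) \cap \K$ except $\BQ{10}{13}$ and $\BQ{11}{13}$, the element $\alpha_0 = 7 + (1+\sqrt{m})^2$ has length $5$ by Lemma \ref{le:comp} (\ref{it:mNot1Gen}); for these two fields together with $\BQ{30}{35} \in \K_2$, the alternative elements listed in the statement of Proposition \ref{pr:mNot1Pis5} have length $5$ by Lemma \ref{le:comp} (\ref{it:threeexceptions}).

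The main obstacle, beyond careful bookkeeping, is the relabeling $m < s < t$: a candidate pair with $ms/\gcd(m,s)^2 < s$ actually describes a field whose canonical ordering puts this smaller quantity in the role of $s$ and shifts $s$ into the role of $t$. Such spurious pairs (for example $(10, 15)$ and $(14, 21)$, both of which get relabeled to fields excluded from $\K$ by the conditions $m \notin \{2,3,6,7\}$ or $s - m \in \{4,8,12\}$) must be identified and discarded so that the final enumeration matches exactly with the list appearing in Lemma \ref{le:comp}.
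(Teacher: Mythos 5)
Your proposal is correct and follows essentially the same route as the paper: note $\K_1\subseteq\K_3$, bound $\K_3$ directly, bound $\K_2$ via an inequality controlling $\gcd(m,s)$, enumerate the finite lists (discarding the relabeled pairs such as $(10,15)$ and $(14,21)$), and defer the length verifications to the computational Lemma \ref{le:comp}. The only cosmetic difference is that you derive the $\K_2$ bound in the variables $(m,s,d)$ with $d=\gcd(m,s)$, whereas the paper writes the same estimate in the symmetric coordinates $(m_0,s_0,t_0)$, obtaining $31\geq m_0+2t_0$; both yield the same two-element set $\K_2=\{\BQ{15}{21},\BQ{30}{35}\}$.
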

	\begin{proof}
		Clearly $\K_1 \subset \K_3$, so it suffices to consider the larger sets $\K_2$ and $\K_3$. We already explained in Subsection \ref{ss:algorithms} that in a \emph{fixed} order $\O_K$, computing the length of a given element is just a computational task. Thus to prove our result, it suffices to provide a full list of fields in $\K_2$ and $\K_3$; the rest of the proof can be handled by a simple computer program (or a very patient student). The results of this computation are already prepared in Lemma \ref{le:comp} (\ref{it:mNot1Gen}) and (\ref{it:threeexceptions}).
		
		We start with $\K_3$. The defining inequality implies $30 \geq s$, so we have $10 \leq m \leq 29$. It suffices to go through all square-free $m \not\equiv 1 \pmod4$ in this interval and for each of them to list all square-free $s$ satisfying $m < s  \leq \frac{31+m}{2}$. We also omit cases where $s=m+4,m+8,m+12$. The full list of the eighteen pairs $(m,s)$ such that $\BQ{m}{s} \in \K_3$ follows:
		\begin{align*}
			&(10,11), (10,13), (10,17), (10,19); (11,13), (11,14), (11,17), (11,21); (14,15), (14,17),\\
			& (14,19); (15,17), (15,21), (15,22); (19,21), (19,22); (22,23); (23,26).
		\end{align*}
		We omitted the fields $(10,15)$ and $(14,21)$, since we require $t$ to be the largest of the three number $m,s,t$, while these fields contain $\sqrt6$. Therefore, they do not belong to $\K$ and are handled in Subsection \ref{ss:67}.
		
		The finiteness of $\K_2$ is less obvious. However, using the notation with $t_0<s_0<m_0$, one can write
		\[
		31 \geq s + t - 2m = m_0t_0 + m_0s_0 - 2s_0t_0 \geq m_0t_0 + m_0(t_0+1) - 2(m_0-1)t_0 = m_0 + 2t_0,
		\]
		so the values of $m_0$ and hence also $s_0$ and $t_0$ are bounded. With a little care, it is quite fast and simple to find a complete list by hand. \notabene{The most convenient way of listing all triples $(t_0,s_0,m_0)$ which correspond to a field in $\K_2$ is probably to observe that the above inequality implies $t_0 \leq \frac{29}{3} < 10$, so since $t_0$ is square-free, one has $t_0 \in \{1,2,3,5,6,7\}$. Going over these possibilities one by one, one has to look for solutions $m,s$ (which are coprime, square-free, and multiples of $t_0$) of inequalities obtained by plugging the given value of $t_0$ into $31 \geq s + m\bigl(\frac{s}{t_0^2} - 2\bigr)$. For example, if $t_0=2$, then the values $m=10$ and $s=14$ are the smallest possible, and they satisfy the inequality -- so they would belong to $\K_2$, were it not for the fact that $s=m+4$ is forbidden. Otherwise $m \geq 14$ and $s \geq 22$ (other even numbers are not square-free), but $s + m\bigl(\frac{s}{4} - 2\bigr) \geq 22 + 14\bigl(\frac{22}{4}-2\bigr) = 71 > 2$, so there are no other solutions with $t_0=2$.}
		
		It turns out that, due to the restriction $m \geq 10$, the set $\K_2$ contains only two fields:
		\[
		\K_2 = \bigl\{\BQ{15}{21}, \BQ{30}{35} \bigr\}.
		\]
		Once we have made the lists explicit, we can apply our computer program. This was successfully done, see Lemma \ref{le:comp} (\ref{it:mNot1Gen}) and (\ref{it:threeexceptions}). In the three fields given by $(10,13)$, $(11,13)$ and $(30,35)$, the original $\alpha_0$ turns out to be a sum of four or even three squares, hence we had to find another element of length $5$.
	\end{proof}

	\subsection{The exceptional cases \texorpdfstring{$s=m+4, m+8, m+12$}{s=m+4, m+8, m+12}} \label{ss:m+4,m+8,m+12}
	
	In this subsection we want to prove $\P(\O_K)\geq 5$ for the three families of fields given by $s=m+4$, $s=m+8$ and $s=m+12$ (keeping the conditions $m \not\equiv 1 \pmod4$, $m \neq 2,3,6,7$). For these fields, the proof from the previous subsection cannot be used, since $\alpha_0 = 7 + (1+\sqrt{m})^2$ can be written as a sum of four or less squares, see Observation \ref{ob:m+4etcIsAProblem}. To find a solution of this problem, let us first examine why the main proof failed in these cases.
	
	The problem lies in the identity
	\begin{align*}
		\alpha_0 - \Bigl(1 + \frac{\sqrt{m}+\sqrt{s}}{2}\Bigr)^2-\Bigl(1 + \frac{\sqrt{m}-\sqrt{s}}{2}\Bigr)^2 &= (7 + 1 + m + 2\sqrt{m}) - \bigl(2 + \frac{m+s}{2} + 2\sqrt{m}\bigr)\\
		&= 7 - 1 - \frac{s-m}{2},
	\end{align*}
	where $7 - 1 - \frac{s-m}{2}$ is either $7 - 3 = 4$, $7 - 5 = 2$ or $7 - 7 = 0$, depending on the concrete family. All the three values, $4$, $2$ and $0$, are sums of at most two squares in $\Z$.
	
	In this, we were actually rather unlucky: Most positive numbers require three or even four squares! So one solution which suggests itself is to replace $7$ by another number $n$ which shares the crucial property of $7$ (namely, it is not a sum of three squares in $\Z$), but also solves the previous problem since $n - 3$, $n - 5$ or $n - 7$ (again depending on the family we are examining) is not a sum of two squares.
	
	The next number after $7$ which is not a sum of three squares is $15$; and while $15-5 = 1^2 + 3^2$ and $15 - 7 = 2^2 + 2^2$ are sums of two squares, $15 - 3$ is not. Therefore the first family, $s = m + 4$, could be solved using the element $15 + (1 + \sqrt{m})^2$. The next number which is not a sum of three squares, $23$, does not help, but for $28$ one sees that both $28 - 5 = 23$ and $28 - 7 = 21$ can not be represented by two squares. Thus, in the following proposition, we solve the remaining two families thanks to the element $28 + (1+\sqrt{m})^2$.
	
	\begin{proposition} \label{pr:m+4etc}
		Let $K = \BQ{m}{s}$ where $m \not\equiv 1 \pmod4$, $m \neq 2,3,6,7$, and $s=m+4$, $s=m+8$ or $s=m+12$. Then $\P(\O_K)\geq 5$.
		
		In particular,
		\[
		\alpha = \begin{cases}
			15 + (1+\sqrt{m})^2 & \text{if $s = m + 4$},\\
			28 + (1+\sqrt{m})^2 & \text{if $s = m + 8$ or $m + 12$}\\
		\end{cases}
		\]
		has length $5$ unless $K$ is one of the following $16$ fields: $m = 10, 11, 15$ and $s= m+4$; $m = 14, 22, 26, 11, 15, 23$ and $s= m+8$; $m = 10, 14, 22, 26, 11, 19, 23$ and $s= m+12$. In these exceptional fields, $7 + \bigl(\frac{\sqrt{m}+\sqrt{s}}{2}\bigr)^2$ has length $5$.\notabene{It seems that this element actually solves emph{all} such fields, not only the $16$ exceptions; it is probably just better than $7 + (1+\sqrt{m})^2$, as soon as $s-m$ is small. Why? One must ask which of $\atr (1+\sqrt{m})^2 = 1 + m$ and $\atr \Bigl(\frac{\sqrt{m}+\sqrt{s}}{2}\Bigr)^2 = \frac{m+s}{4}$ is smaller. That depends on an inequality like $3m \leq s$. Equivalently, $3s_0 \leq m_0$. $\ldots$ Yes, similar thoughts are behind the distinction of $12$ families in the last section.}
	\end{proposition}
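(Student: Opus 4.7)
The proof will closely mirror that of Proposition~\ref{pr:mNot1Pis5}. First, the upper bound $\ell(\alpha)\le 5$ is immediate: both $15$ and $28$ are of the form $4^a(8b+7)$, so $\ell_\Z(n)=4$ by the three-square theorem, and one writes $\alpha=(1+\sqrt m)^2+(\text{four rational squares summing to }n)$. For the lower bound, assume towards contradiction $\alpha=\sum_{i=1}^{4}x_i^2$ with $x_i=\frac{a_i+b_i\sqrt m+c_i\sqrt s+d_i\sqrt t}{2}$ satisfying the parity conditions imposed by the integral basis. Comparing coefficients yields the trace equation $\sum a_i^2+m\sum b_i^2+s\sum c_i^2+t\sum d_i^2 = 4(n+1+m)$ together with $\sum a_ib_i+m_0\sum c_id_i=4$, $\sum a_ic_i+s_0\sum b_id_i=0$ and $\sum a_id_i+t_0\sum b_ic_i=0$. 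Since $m_0\ge 3$ is square-free (hence $m_0\nmid 4$), the $\sqrt m$-equation forces $a_kb_k\ne 0$ for some $k$, so $\sum a_i^2,\sum b_i^2\ge 1$.

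Substituting this into the trace equation, and exploiting the very tight relation $s\in\{m+4,m+8,m+12\}$, I would derive explicit bounds of the form $\sum c_i^2+\sum d_i^2\le 3$ and $\sum b_i^2\le 2$ -- in particular $b_i,c_i,d_i\in\{0,\pm 1\}$ -- valid for all $(m,s)$ outside a finite list of small pairs. Because the gap $t-s$ can no longer be made arbitrarily large (we have $s\sim m$), the thresholds scale uniformly, so the exception set really is finite; it should coincide with (or nearly coincide with) the sixteen pairs enumerated in the statement, which are handled by the algorithm of Subsection~\ref{ss:algorithms}. In fact, Lemma~\ref{le:comp}(\ref{it:mNot1SpecBad}) already records that $7+\bigl(\frac{\sqrt m+\sqrt s}{2}\bigr)^2$ has length $5$ in all sixteen of them, which completes those cases.

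For the generic fields I would then perform a case distinction on the pattern of nonzero $b_i,c_i,d_i$, using the integral-basis parity conditions (exactly as in the closing part of the proof of Proposition~\ref{pr:mNot1Pis5}) to eliminate configurations that would require two or more of $\frac{1+\sqrt m}{2}$, $\frac{1+\sqrt s}{2}$, $\frac{\sqrt m+\sqrt s}{2}$ to be algebraic integers simultaneously -- impossible for $m\not\equiv 1\pmod 4$. What should remain are two scenarios. Either all $c_i=d_i=0$, so the decomposition lives in $\Z[\sqrt m]$ and three further rational squares must sum to $n$, contradicting $\ell_\Z(n)=4$; or two summands contribute the terms $\bigl(1+\frac{\sqrt m\pm\sqrt s}{2}\bigr)^2$ (forced by the $\sqrt s$- and $\sqrt t$-equations together with $a_kb_k\ne 0$), and the remaining two squares are then rational integers summing to $n-1-\frac{s-m}{2}\in\{12,23,21\}$. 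By our choice of $n$, none of $12,23,21$ is a sum of two rational squares, yielding the required contradiction -- this is precisely the arithmetic fact that dictated $n=15$ for $s=m+4$ and $n=28$ for $s\in\{m+8,m+12\}$ in the light of Observation~\ref{ob:m+4etcIsAProblem}. The main technical obstacle will be this case analysis: because the trace budget $4(n+1+m)$ is considerably larger than in Proposition~\ref{pr:mNot1Pis5}, more mixed sign patterns of $(b_i,c_i,d_i)$ are a priori admissible (e.g.\ isolated nonzero $c_i$ or $d_i$, or simultaneous contributions along $\sqrt s$ and $\sqrt t$), and each must be dispatched by combining the three off-diagonal coefficient equations with the parity constraints of the specific integral basis.
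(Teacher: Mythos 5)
Your proposal follows essentially the same route as the paper's proof: mirror the machinery of Proposition \ref{pr:mNot1Pis5}, use the $\sqrt{m}$-coefficient equation to force some $a_kb_k\neq 0$, kill the $d_i$ and bound $\sum b_i^2+\sum c_i^2$ by trace estimates (with finitely many small fields delegated to Lemma \ref{le:comp}), and then show that any decomposition either lives in $\Z[\sqrt{m}]$ or contains the two summands $\bigl(\pm1+\frac{\sqrt{m}\pm\sqrt{s}}{2}\bigr)^2$, leaving a rational remainder $12$, $23$ or $21$ that is not a sum of two squares --- exactly the arithmetic that dictated the choice of $15$ and $28$. The only caveats are cosmetic: the exceptional list actually splits into the sixteen fields where $\alpha$ genuinely fails (Lemma \ref{le:comp}~(\ref{it:mNot1SpecBad})) plus further fields where only the argument, not the statement, breaks down (parts (\ref{it:mNot1SpecGood15}) and (\ref{it:mNot1SpecGood28})), and your parenthetical that $t-s$ is bounded is false (it is $s-m$ that is bounded, while $t\sim m^2/t_0^2$), though neither point affects the validity of the argument.
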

	
	\begin{proof}
		The proof goes along the same lines as that of Proposition \ref{pr:mNot1Pis5}, only at the end one must prove that it is impossible to have two summands of the form $x_i = \frac{a_1+b_1\sqrt{m}+\sqrt{s}\pm \sqrt{t}}{2}$. We just go through the claims which have to be proven on the way, to see which exceptional fields must be handled separately. (See Lemma \ref{le:comp}, not only for the 16 exceptional fields listed in (\ref{it:mNot1SpecBad}) but also the fields included in (\ref{it:mNot1SpecGood15}) and (\ref{it:mNot1SpecGood28}), for which the statement holds but the following proof breaks down.)
		
		As always, we suppose $\alpha = \sum x_i^2$ with $x_i = \frac{a_i+b_i\sqrt{m}+c_i\sqrt{s}+d_i\sqrt{t}}{2}$; since $m\equiv s \not\equiv 1 \pmod{4}$, the conditions are $a_i\equiv d_i$ and $b_i \equiv c_i \pmod2$ (and in the cases where $t \equiv 3$, $a_i$ and $d_i$ must both be even). It is important to note $t = \frac{ms}{t_0^2}$, where $t_0$, as the greatest common divisor of $m$ and $s$, divides $4$, $8$ or $12$, respectively. Together with $t_0$ being square-free, one sees that if $m$ and $s$ are even, then $t_0 = 2$ or $6$, the latter case occurring only if $s = m+12$ and $3 \mid m$. Similarly, if $m$ and $s$ are odd (thus $3$ modulo $4$), then $t_0 =1$ or $3$, the latter case again occurring only if $s=m+12$ and $3 \mid m$. From this, it is easy to see whether $t \equiv 1$ or $t \equiv 3 \pmod4$, determining the integral basis.
		
		\textbf{(A) $s=m+4$:} The three fields with $m \leq 15$ must have been handled separately (Lemma \ref{le:comp} (\ref{it:mNot1SpecBad})) since $\alpha$ is a sum of less than five squares already in $\Z[\sqrt{m}]$. From now on suppose $m>15$. Then at least one $x_i$ is not in $\Z[\sqrt{m}]$. Also, by comparing coefficients in front of $\sqrt{m}$, we see that at least one $a_ib_i \neq 0$. Suppose now that there is a nonzero $d_i$. For odd $m$ we have $\frac{d_i^2 t}{4} = \frac{d_i^2}{4}m(m+4) \geq \frac14 m(22+4) = \frac{13}{2}m$, while for even $m$ we use the fact that $t = \frac{m(m+4)}{4} \equiv 3 \pmod4$ to deduce $\frac{d_i^2 t}{4} \geq \frac{2^2}{4}\frac{m(m+4)}{4} \geq m \frac{19+4}{4}$. Either way, $\frac{d_i^2 t}{4}$ is larger than $\atr \alpha = 16 + m$, which is a contradiction. Hence, all $d_i$ are zero.
		
		Now this implies that all $a_i$ must be even, and one can forget about the integral basis since $t$ plays no more role. If there were only one summand with $c_i \neq 0$, its square would have to belong to $\Z[\sqrt{m}]$, so by Lemma \ref{le:KTZ}, it would be $\frac{c_i}{2}\sqrt{s}$ with $c_i$ even; we also know that at least one $b_j$ is nonzero, and since $c_j = 0$, this $b_j$ is even. Putting this together, if there is only one summand with $c_i \neq 0$, comparing traces yields $16 + m \geq \frac{b_j^2}{4}m + \frac{c_i^2}{4}s \geq m + s = 2m + 4$, which is impossible. 
		
		Therefore there are at least two summands with $c_i \neq 0$. This also means $\sum c_i^2 \geq 2$, and due to the congruence conditions $\sum b_i^2 + \sum c_i^2 \geq 4$. The next step is to show that $\sum b_i^2 + \sum c_i^2 > 4$ is impossible; and indeed, comparing traces and exploiting $\sum b_i^2 + \sum c_i^2 \geq 6$ and $\sum c_i^2 \geq 2$ leads to a contradiction for $m > 26$ (leaving a few cases to be checked separately, see Lemma \ref{le:comp} (\ref{it:mNot1SpecGood15})).
		
		Now, finally, we know $\sum b_i^2 + \sum c_i^2 = 4$, which together with the existence of nonzero $b_i$ and nonzero $c_j$ implies that one can assume $c_1=c_2=1$, $|b_1|=|b_2|=1$, and all other $b_i$, $c_i$, $d_i$ are zeros. As a result of this, the equations obtained by comparing coefficients of $\alpha$ and $\sum x_i^2$ take the simple form (with the notation $A_i=\frac{a_i}{2} \in \Z$):
		\begin{align*}
			16 + m &= \sum A_i^2 + m\Bigl(\frac14 + \frac14\Bigr) + s\Bigl(\frac14 + \frac14\Bigr) = \sum A_i^2 + m + 2,\\
			2 &= 2\sum \frac{A_ib_i}{2} = A_1b_1 + A_2b_2,\\
			0 &= \sum A_ic_i = A_1 + A_2,\\
			0 &= \sum b_ic_i = b_1 + b_2.\\
		\end{align*}
		The last three equations together with $|b_1|=1$ show that (after possibly switching $x_1$ and $x_2$) $A_1=b_1=1$, $A_2=b_2=-1$, which turns the first equation into
		\[
		12 = \sum_{i\geq 3} A_i^2.
		\]
		From this, we see that it is necessary (and sufficient) that there are at least three more summands $A_3$, $A_4$, $A_5$. We have proven that $\alpha$ is not a sum of less than five squares.
		
		\textbf{(B) $s = m+8$:} For the cases where $m \leq 28$, another choice of $\alpha$ had to be made (Lemma \ref{le:comp} (\ref{it:mNot1SpecBad})), since this $\alpha$ is a sum of less than five squares in $\Z[\sqrt{m}]$. Otherwise the proof is just the same as for $s=m+4$. Proving that all $d_i$ must be zero is slightly complicated by the fact that both for odd and even $m$, we get $t \equiv 1$; but since in the even case we have $m \geq 30$, one still gets a contradiction if $d_{i_0}\neq 0$:
		\[
		29 + m \geq \frac{{d_{i_0}}^2}{4}t \geq \frac{1}{4}\frac{m(m+8)}{4} \geq m \frac{38}{16} \geq m + \frac{11}{8}m > m + m \geq m + 30.
		\]
		After that, one goes through the same steps as for $s=m+4$, so we omit most of them. The inequality $\sum b_i^2 + \sum c_i^2 > 4$ gives a contradiction for $m > 48$, so the cases with $m \leq 48$ must be handled separately (Lemma \ref{le:comp} (\ref{it:mNot1SpecGood28})). For $m > 48$, we eventually come to the conclusion that there must be the summands $x_1^2 = \Bigl(1 + \frac{\sqrt{m}+\sqrt{s}}{2}\Bigr)^2$ and $x_2^2 = \Bigl(-1 + \frac{-\sqrt{m}+\sqrt{s}}{2}\Bigr)^2$, and the other summands are squares of rational integers $A_i$ satisfying $\sum_{i\geq 3} A_i^2 = \alpha - x_1^2 - x_2^2 = 23$. Thus there are at least four such summands -- so any representation of $\alpha$ as a sum of squares which uses $x_1 \notin \Z[\sqrt{m}]$ actually needs at least \emph{six} squares.
		
		\textbf{(C) $s = m+12$:} Here the proof is absolutely the same as in the previous case (even including the fact that for $m \leq 28$ there is a representation of $\alpha$ as a sum of less than five squares in $\Z[\sqrt{m}]$).
		
		The only step where we have to be careful is showing that all $d_i$ are zero. If $s = m+12$, then $t_0$ can take the values $1,2,3,6$, and especially in the last case, $t = \frac{m(m+12)}{36}$ is not \emph{much larger} then $m$, which is what the estimates make use of. However, if $t_0$ is $2$ or $6$, then $t \equiv 3\pmod4$, so $\frac{d_i^2}{4}t \geq t$; and since for $m > 29$ the first field with $s=m+12$ and $t_0=6$ is actually $m=66$, $s=78$ (due to $54$ not being square-free and the field $\BQ{30}{42}$ satisfying $t=m+12$, since $s=35)$, one gets the following contradiction:
		\[
		29 + m \geq \frac{d_i^2}{4}t \geq t = \frac{m(m+12)}{36} \geq m \frac{78}{36} > 2m,
		\]
		implying $29 > m$. For $t_0 = 1, 2, 3$ one also gets the inequality $\frac{d_i^2}{4}t > 2m$, leading to the same contradiction, so all $d_i$ must indeed be zeros.
		
		To disprove $\sum b_i^2 + \sum c_i^2 > 4$, one must assume $m > 44$ (the remaining cases being handled in Lemma \ref{le:comp} (\ref{it:mNot1SpecGood28})). With this assumption, one again concludes that one of the summands is $x_1^2 = \Bigl(1 + \frac{\sqrt{m}+\sqrt{s}}{2}\Bigr)^2$, another is $x_2^2 = \Bigl(-1 + \frac{-\sqrt{m}+\sqrt{s}}{2}\Bigr)^2$, and the other summands are squares of rational integers $A_i$. They must satisfy $\sum_{i\geq 3} A_i^2 = \alpha - x_1^2 - x_2^2 = 21$, so there must be at least three of them. Any representation of $\alpha$ as a sum of squares therefore needs at least five squares.
	\end{proof}

	\subsection{Fields containing \texorpdfstring{$\sqrt6$ or $\sqrt7$}{root of 6 or 7}} \label{ss:67}
	
	In this section\notabene{In the \LaTeX source code, behind this section we kept an older version containing also $\sqrt{13}$.} we prove that, save a few exceptions, rings of integers containing $\sqrt6$ or $\sqrt7$ have Pythagoras number at least $5$. More specifically, we prove Propositions \ref{pr:sqrt7} and \ref{pr:sqrt6}. In these statements, the meaning of $s$ is fixed by Convention \ref{no:mst}: We have $m=7$ ($m=6$, resp.), $s>m$ square-free, and the condition $t>s$ translates into $7 \nmid s$ ($3 \nmid s$, resp.).
	
	\begin{proposition} \label{pr:sqrt7}
		Let $K=\BQ{7}{s}$. %\footnote{Remember that we require $s$ square-free, $7<s<t$; i.e.\ $s \neq 2,3,5,6$ and $7 \nmid s$.}
		Then $\P(\O_K)\geq 5$.
		
		In particular, if $s\neq 11$, then $\ell(\alpha)=5$ holds for $\alpha = \alpha_0 + w^2$, where $\alpha_0 = 1^2+1^2+1^2+(1+\sqrt7)^2 = 11 + 2\sqrt7$ and
		\[
		w = \begin{cases}
			\frac{1+\sqrt{s}}{2} & \text{if $s \equiv 1 \pmod4$},\\
			1+\sqrt{s} & \text{if $s \equiv 2 \pmod4$},\\
			\frac{\sqrt{7}+\sqrt{s}}{2} & \text{if $s \equiv 3 \pmod4$}.\\
		\end{cases}
		\]
		For $s=11$, the element  $3+\bigl(\frac{\sqrt{7}+\sqrt{11}}{2}\bigr)^2+\bigl(1+\frac{\sqrt{7}+\sqrt{11}}{2}\bigr)^2$ has length $5$.
	\end{proposition}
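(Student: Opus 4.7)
My plan is to follow the same template as Propositions \ref{pr:B1coprime}, \ref{pr:mNot1Pis5} and \ref{pr:m+4etc}. All $s$ up to an explicit threshold (depending on $s\bmod 4$) are handled by the computer in Lemma \ref{le:comp}(\ref{it:sqrt7}), and the exceptional field $s=11$ by Lemma \ref{le:comp}(\ref{it:sqrt7spec}); for $s$ beyond that threshold I would suppose for contradiction that $\alpha=\sum_{i=1}^{4}x_i^{2}$ with $x_i=\tfrac12(a_i+b_i\sqrt{7}+c_i\sqrt{s}+d_i\sqrt{t})$, subject to the parity conditions dictated by the integral basis. The basis is of type (B3) when $s\equiv 1$, of type (B1) when $s\equiv 2$, and again of type (B3) (but with the roles of $p,q,r$ permuted) when $s\equiv 3\pmod 4$. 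Comparing the coefficients of $1,\sqrt{7},\sqrt{s},\sqrt{t}$ yields four Diophantine identities.

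I would begin each of the three cases with the trace identity $\sum a_i^{2}+7\sum b_i^{2}+s\sum c_i^{2}+7s\sum d_i^{2}=4\atr\alpha$. Since $\atr\alpha$ is only linear in $s$, once $s$ passes the threshold this forces $\sum d_i^{2}=0$ and a small bound on $\sum c_i^{2}$, namely $\sum c_i^{2}\le 1$ when $s\equiv 1$ or $3$ and $\sum c_i^{2}\le 4$ when $s\equiv 2$. The coefficient equation for whichever of $\sqrt{s},\sqrt{t}$ actually appears in $\alpha$ then forces the relevant coefficient to be nonzero, and the remaining identities together with the integral-basis parities determine the nonzero quadruple uniquely, so that $x_1^{2}=w^{2}$ (up to an overall sign). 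Subtracting reduces the problem to showing that $\alpha_0=11+2\sqrt{7}$ is not a sum of three squares in $\O_K$.

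For that last step I would apply the trace identity once more: since $\atr\alpha_0=11$, the $s$- and $t$-terms must vanish, and the parity conditions then force the three remaining summands to lie in $\Z[\sqrt{7}]$. Writing them as $A_i+B_i\sqrt{7}$, the relations $\sum(A_i^{2}+7B_i^{2})=11$ and $\sum A_iB_i=1$ combined with $7\sum B_i^{2}\le 11$ give $\sum B_i^{2}=1$, so that some $B_j=\pm 1$ and $A_j=\pm 1$; but then $\sum_{i\neq j}A_i^{2}=11-1-7=3$, which is not a sum of two rational-integer squares. Hence $\ell(\alpha_0)\ge 4$ and $\ell(\alpha)\ge 5$. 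The main obstacle is the class $s\equiv 2\pmod 4$, where $w=1+\sqrt{s}$ contributes coefficient $2$ (not $\tfrac12$) on $\sqrt{s}$ and $\sum c_i^{2}$ can in principle reach $4$ in several distributions; here the (B1) parity condition $c_i\equiv d_i\pmod 2$, which combined with $d_i=0$ forces every $c_i$ to be even, is what allows---together with $\sum a_ic_i=4$ and $\sum b_ic_i=0$---to exclude every admissible shape except $c_1=\pm 2$ (others zero) and so $x_1=\pm(1+\sqrt{s})=\pm w$, as required.
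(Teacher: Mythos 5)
Your proposal is correct and follows essentially the same route as the paper: compare traces to bound $\sum c_i^2+7\sum d_i^2$ using the parity conditions of the integral basis, conclude $x_1=\pm w$ with all other summands in $\Z[\sqrt7]$, reduce to $\alpha_0=3+(1+\sqrt7)^2$ in $\Z[\sqrt7]$, and dispatch the finitely many small $s$ (and $s=11$) by the computations in Lemma \ref{le:comp}. The only cosmetic differences are that the paper allows an arbitrary number of summands rather than fixing four, and cites $\ell(\alpha_0)=4$ in $\Z[\sqrt7]$ from Theorem \ref{th:quadratic} instead of reproving it as you do.
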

	%Older, shorter version:
	% \begin{proof}
		% Direct analogy of the previous one. Again, the three cases are solved separately. If $s \equiv 1$, one gets the inequality $s \leq 45$, which leaves $6$ fields to be handled in Lemma \ref{le:comp} (\ref{it:sqrt7}); is $s \equiv 2$, one arrives at $s \leq 12$, which leaves only one field; if $s \equiv 3$, one deduces $s \leq 51$, which leaves $8$ fields (plus the case $s = 11$, which is handled in Lemma \ref{le:comp} (\ref{it:sqrt7spec})).
		% \end{proof}
	\begin{proof}
		Suppose that $\alpha = \sum x_i^2$; since $\alpha \notin \Q(\!\sqrt7)$, we can assume $x_1^2 \notin \Q(\!\sqrt7)$. It is known (and easy to check) that $\ell(\alpha_0)=4$ in $\Z[\sqrt7]$, see Theorem \ref{th:quadratic}. Therefore, it suffices to prove that $x_1 = \pm w$ and $x_i \in \Z[\sqrt7]$ for $i \neq 1$; indeed, in such a case $\alpha_0 = \sum_{i\neq 1} x_i^2$ requires at least four summands.
		
		To achieve this, denote $x_i = \frac{a_i+b_i\sqrt{7}+c_i\sqrt{s}+d_i\sqrt{7s}}{2}$; there are congruence conditions imposed on the coefficients, but these depend on the type of the field and we do not need them just yet. Comparing traces yields
		\begin{equation}\label{eq:ineq7}
			11 + \atr w^2 = \atr \alpha = \sum \frac{a_i^2 + 7b_i^2 + sc_i^2 + 7sd_i^2}{4} \geq \frac{s}{4}\Bigl(\sum c_i^2 + 7 \sum d_i^2\Bigr).
		\end{equation}
		Since $x_1 \notin \Z[\sqrt7]$, the value of $\sum c_i^2 + 7 \sum d_i^2$ is nonzero. To examine this quantity further, we must distinguish the three possible types of fields:
		
		\textbf{(A) $s \equiv 1$:} The field is of type (B2,3) with $q=s$, which means that $a_i\equiv c_i$ and $b_i \equiv d_i \pmod2$. Therefore the smallest nonzero value of $\sum c_i^2 + 7 \sum d_i^2$ is $1$. Suppose first, for the sake of contradiction, that $\sum c_i^2 + 7 \sum d_i^2 > 1$. Then we have at least $\sum c_i^2 + 7 \sum d_i^2 \geq 2$. Plugging this, together with $\atr w^2 = \frac{1+s}{4}$, into the inequality \eqref{eq:ineq7}, yields
		\[
		11 + \frac{1+s}{4} \geq \frac{s}{4}2,
		\]
		i.e.\ $45 \geq s$. This holds only for six fields, handled separately in Lemma \ref{le:comp} (\ref{it:sqrt7}); otherwise we get a contradiction.
		
		Thus we proved $\sum c_i^2 + 7\sum d_i^2 = 1$; together with $x_1 \notin \Q(\!\sqrt7)$ this means $|c_1|=1$ and all other $c_i, d_i = 0$. In particular, $x_i \in \Z[\sqrt7]$ for $i \neq 1$. By choosing the sign, we may assume $x_1 = \frac{a_1 + b_1\sqrt7 + \sqrt{s}}{2}$. This yields
		\[
		\alpha = x_1^2 + \underbrace{\cdots\cdots}_{\in\Q(\!\sqrt7)} = \underbrace{\cdots\cdots}_{\in\Q(\!\sqrt7)} + \frac{a_1}{2}\sqrt{s} + \frac{b_1}{2}\sqrt{7s}.
		\]
		Comparing this with $\alpha = \bigl(11 + \frac{1+s}{4}\bigr) + 2\sqrt7 + \frac{\sqrt{s}}{2}$, we conclude $a_1=1$, $b_1=0$. Thus indeed $x_1 = \frac{1+\sqrt{s}}{2} = w$.
		
		\textbf{(B) $s \equiv 2$:} The proof is the same as in the previous case, only with different congruence conditions: The field is of type (B1) with $q=7$, which means that $a_i, b_i$ are even and $c_i \equiv d_i \pmod2$. Therefore the smallest nonzero value of $\sum c_i^2 + 7 \sum d_i^2$ is $4$. Suppose first, for the sake of contradiction, that $\sum c_i^2 + 7 \sum d_i^2 > 4$. Then we have at least $\sum c_i^2 + 7 \sum d_i^2 \geq 8$, since either some $d_j$ is nonzero, giving $c_j^2+7d_j^2 \geq 1 + 7$, or there are at least two nonzero even $c_i$. Plugging this, together with $\atr w^2 = 1+s$, into the inequality \eqref{eq:ineq7}, yields
		\[
		11 + (1+s) \geq \frac{s}{4}8,
		\]
		i.e.\ $12 \geq s$. This holds only for $s=10$, handled separately in Lemma \ref{le:comp} (\ref{it:sqrt7}); otherwise we get a contradiction.
		
		Thus $\sum c_i^2 + 7\sum d_i^2 = 4$; as before, this means that $x_i \in \Z[\sqrt7]$ for $i\neq 1$ and (after possibly changing the sign) $x_1 = \frac{a_1 + b_1\sqrt7}{2} + \sqrt{s}$. Hence
		\[
		\alpha = x_1^2 + \underbrace{\cdots\cdots}_{\in\Q(\!\sqrt7)} = \underbrace{\cdots\cdots}_{\in\Q(\!\sqrt7)} + a_1\sqrt{s} + b_1\sqrt{7s}.
		\]
		Comparing this with $\alpha = (11 + 1+s) + 2\sqrt7 + 2\sqrt{s}$, we conclude $a_1=2$, $b_1=0$. Thus indeed $x_1 = 1+\sqrt{s} = w$.
		
		\textbf{(C) $s \equiv 3$:} This is analogous to the previous two cases, so we provide only stepping stones. This time the field is of type (B2,3) with $q=7s$, so $a_i \equiv d_i$ and $b_i\equiv c_i \pmod2$; as in the first case, the minimal nonzero value of $\sum c_i^2 + 7\sum d_i^2$ is $1$. The assumption $\sum c_i^2 + 7\sum d_i^2 \geq 2$ leads to $51 \geq s$; this holds only for nine fields, of which eight are handled in Lemma \ref{le:comp} (\ref{it:sqrt7}) and $s=11$ in Lemma \ref{le:comp} (\ref{it:sqrt7spec}). Thus $\sum c_i^2 + 7\sum d_i^2 = 1$, which straightforwardly leads to $x_i \in \Z[\sqrt7]$ for $i \neq 1$ and $\pm x_1 = \frac{a_1 + b_1\sqrt{7} + \sqrt{s}}{2}$; comparing coefficients then yields $\pm x_1 = \frac{\sqrt{7}+\sqrt{s}}{2} = w$ as needed.
	\end{proof}
	
	By straightforwardly applying the same method, we can prove the analogous result for $\sqrt6$. The only difference is that there are more cases to handle due to the fact that $s$ is not necessarily coprime with $6$. However, once the case distinction is made and the integral basis fixed, the proofs are complete analogies to the one above. Note that we had to include $m=6$ explicitly in the statement to exclude the one possibility $t=6$, $s=3$, $m=2$.
	
	\begin{proposition} \label{pr:sqrt6}
		Let $K=\BQ{6}{s}$ with $m=6$. Then $\P(\O_K)\geq 5$.
		
		In particular, if $s\neq 10$, then $\ell(\alpha)=5$ holds for $\alpha = \alpha_0 + w^2$, where $\alpha_0 = 1^2+1^2+1^2+(1+\sqrt6)^2 = 10 + 2\sqrt6$ and
		\[
		w = \begin{cases}
			\frac{1+\sqrt{s}}{2} & \text{if $s \equiv 1 \pmod4$},\\
			\frac{\sqrt{6}+\sqrt{s}}{2} & \text{if $s \equiv 2 \pmod4$},\\
			1+\sqrt{s} & \text{if $s \equiv 3 \pmod4$}.\\
		\end{cases}
		\]
		For $s=10$, the element $3+\bigl(\sqrt{6} -\frac{\sqrt{6}+\sqrt{10}}{2}\bigr)^2+\bigl(2+\frac{\sqrt{6}+\sqrt{10}}{2}\bigr)^2$ has length $5$.
	\end{proposition}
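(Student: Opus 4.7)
The plan is to imitate the proof of Proposition \ref{pr:sqrt7} almost line for line. As before, suppose $\alpha=\sum x_i^2$ with $x_i=\frac{a_i+b_i\sqrt6+c_i\sqrt s+d_i\sqrt t}{2}$, where the rational integers $a_i,b_i,c_i,d_i$ satisfy congruence relations dictated by the integral basis, and where $t=6s/\gcd(6,s)^2$. By Theorem \ref{th:quadratic} we have $\ell(\alpha_0)=4$ in $\Z[\sqrt6]$, so at least one summand $x_1\notin\Q(\sqrt6)$. If we can show that, outside of finitely many small exceptional fields, necessarily $x_1=\pm w$ and $x_i\in\Z[\sqrt6]$ for $i\neq 1$, then $\alpha_0=\sum_{i\neq 1}x_i^2$ forces at least four further summands and we are done. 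Comparing traces yields
\[
11+\atr w^2 \;\geq\; \frac{s\sum c_i^2 + t\sum d_i^2}{4},
\]
with the right-hand side positive by hypothesis on $x_1$.

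The discussion now splits according to $s\bmod 4$, exactly as in the proof of Proposition \ref{pr:sqrt7}, but with an extra wrinkle when $s$ is even. For $s\equiv 1\pmod 4$ one has $\gcd(6,s)=1$, hence $t=6s$; the field is of type (B2) with conditions $a_i\equiv c_i$, $b_i\equiv d_i\pmod 2$, so the smallest nonzero value of $\sum c_i^2+6\sum d_i^2$ is $1$. Assuming the value is $\geq 2$ gives the bound $s\leq 45$, producing a short list handled by Lemma \ref{le:comp} (\ref{it:sqrt6}); otherwise $|c_1|=1$ and all other $c_i,d_i=0$, and matching the $\sqrt s$-coefficient of $\alpha$ pins $x_1=\pm\tfrac{1+\sqrt s}{2}$. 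The case $s\equiv 3\pmod 4$ is entirely parallel: $t=6s\equiv 2\pmod 4$, the field is of type (B1), and after dismissing a bounded list of $s$ one matches coefficients to force $x_1=\pm(1+\sqrt s)$. The values of $s$ left over reappear in Lemma \ref{le:comp} (\ref{it:sqrt6}).

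The main obstacle is the middle case $s\equiv 2\pmod 4$, which has no analogue in the sqrt7 proof: here $\gcd(6,s)=2$, so $t=3s/2$, and the field is of type (B1) or (B2) according to whether $s/2\equiv 1$ or $3\pmod 4$. In both subcases $\tfrac{\sqrt 6+\sqrt s}{2}$ is integral, and one must re-derive the parity relations on $(a_i,b_i,c_i,d_i)$ from the appropriate basis; these force the minimal nonzero value of $\sum c_i^2+\tfrac{t}{s}\sum d_i^2$ to come from a single pair with $c_1,b_1$ both odd (or $c_1,a_1$ both odd, depending on the subcase). Assuming the quantity exceeds the minimum, the trace inequality
\[
11+\frac{6+s}{4} \;\geq\; \frac{s\sum c_i^2+t\sum d_i^2}{4}
\]
again bounds $s$, leaving only finitely many fields (precisely those listed in Lemma \ref{le:comp} (\ref{it:sqrt6})); in the minimal case one reads off $x_1=\pm\tfrac{\sqrt 6+\sqrt s}{2}=\pm w$ by matching the coefficients of $\sqrt s$ and $\sqrt t$ in $\alpha$.

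Finally, the field $s=10$ falls into the bounded list but admits a representation of $\alpha$ by fewer squares, so it has to be treated via a different element, namely $3+\bigl(\sqrt 6-\tfrac{\sqrt 6+\sqrt{10}}{2}\bigr)^2+\bigl(2+\tfrac{\sqrt 6+\sqrt{10}}{2}\bigr)^2$, whose length $5$ in $\O_K$ is verified by Lemma \ref{le:comp} (\ref{it:sqrt6spec}). Note that the assumption $m=6$ (rather than merely $6\in\{m,s,t\}$) is needed only to rule out $K=\BQ{2}{3}$, the one biquadratic field containing $\sqrt 6$ in which $m\neq 6$.
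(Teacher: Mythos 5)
Your proposal follows the paper's proof exactly: the paper's own argument is declared to be a ``direct analogy'' of the proof of Proposition \ref{pr:sqrt7}, with the same trace comparison, the same case split on $s \bmod 4$ (including the extra subdivision of $s\equiv 2$ according to $s/2 \bmod 4$, which determines type (B1) versus (B2,3)), the finite residual lists delegated to Lemma \ref{le:comp} (\ref{it:sqrt6}), and $s=10$ treated via the separate element of Lemma \ref{le:comp} (\ref{it:sqrt6spec}). One small but consequential slip: for $m=6$ one has $\atr\alpha_0 = 10$, not $11$; your master inequality $11+\atr w^2 \geq \frac{s\sum c_i^2+t\sum d_i^2}{4}$ carries the constant over from the $m=7$ case. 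With the correct constant the bounds are $s\leq 41$, $s\leq 22$, $s\leq 46$ and $s\leq 92$ in the four (sub)cases, which is exactly what produces the $5+3+2+7$ fields listed in Lemma \ref{le:comp} (\ref{it:sqrt6}); with your $11$ they relax to $s\leq 45$, $s\leq 24$, $s\leq 50$, $s\leq 100$, which would additionally force you to check $\BQ{6}{23}$ and $\BQ{6}{94}$ --- fields the lemma does not cover. Finally, in both $s\equiv 2$ subcases the relevant parity condition pairs $c_i$ with $b_i$ (it comes from the basis element $\frac{\sqrt6+\sqrt{s}}{2}$ in either integral basis), never with $a_i$; your parenthetical alternative is not needed.
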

	\begin{proof}
		Direct analogy of the previous proof. If $s \equiv 1,3 \pmod4$, then $\gcd(6,s)=1$, so there is nothing new and one gets the inequalities $s \leq 41$ and $s \leq 22$, respectively, which leaves $5+3$ fields to be checked separately in Lemma \ref{le:comp} (\ref{it:sqrt6}). If $s \equiv 2$, one has $\gcd(6,s)=2$ and $s = 2m_0$, which makes the estimates slightly less efficient; moreover, one must distinguish between fields with $m_0 \equiv 1 \pmod4$, which are of type (B1), and fields with $m_0 \equiv 3 \pmod4$, which are of type (B2,3). The case $s=10$ is dealt with in Lemma \ref{le:comp} (\ref{it:sqrt6spec}), while the $2+7$ fields which are not excluded by the analogously obtainable inequalities $s \leq 2\cdot 23$ in the former and $s \leq 2\cdot 46$ in the latter case, are again solved in Lemma \ref{le:comp} (\ref{it:sqrt6}).
	\end{proof}

	\subsection{Fields containing \texorpdfstring{$\sqrt2$, $\sqrt3$ or $\sqrt5$}{root of 2, 3 or 5}} \label{ss:235}
	
	Finally, we shall handle the fields with $m=2,3,5$. Here we find the result quite surprising; the Pythagoras number of $\O_{\Q(\!\sqrt{m})}$ is only three, but it still turns out that save a few exceptions, the Pythagoras number of $\O_K$ where $K$ contains $\Q(\!\sqrt{m})$ is at least five. This is in contrast with the situation in $\Z$: $\P(\Z)=4$, while in orders of quadratic extensions of $\Z$ the Pythagoras number grows only by (at most) one. Let us also remark that for $m=5$, the Pythagoras number is actually \emph{at most} five, and for $m=2$ we conjecture the same, while for $m=3$ it seems that usually $\P(\O_K)=6$ -- see Theorem \ref{th:mainSqrt5} and Conjecture \ref{co:conjecture}.
	
	The proofs in this subsection are reasonably straightforward (although some tricks like using Lemmata \ref{le:KTZ} and \ref{le:quarterSquares} are required) and not even very lengthy; only guessing the appropriate choice of $\alpha$ would be difficult. We used data from our computer programs (see Subsection \ref{ss:algorithms}) to observe which choice might be the right one. In the case of $K = \BQ5{s}$ with $s \not\equiv 1 \pmod4$, we were unable to find such a suitable $\alpha$;\notabene{If such a choice exists at all, it is \emph{much} less obvious than those which are scattered throughout this subsection.} therefore we had to develop stronger tools -- the proof of this case is given in Subsection \ref{ss:strongAsymptotics}.
	
	\begin{proposition} \label{pr:sqrt5sIs1}
		Let $K=\BQ{5}{s}$ for $s \equiv 1 \pmod{4}$. Then $\P(\O_K)\geq 5$.
		
		In particular, the following element has length $5$:
		\[
		\alpha = 1^2 + 1^2 + \Bigl(\frac{1+\sqrt5}{2}\Bigr)^2 + \Bigl(\frac{1+\sqrt{s}}{2}\Bigr)^2 + \Bigl(\frac{-\sqrt5+\sqrt{s}}{2}\Bigr)^2.
		\]
	\end{proposition}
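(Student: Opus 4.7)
The plan is to assume for contradiction that $\alpha = \sum_{i=1}^{4} x_i^2$ with $x_i \in \O_K$ and derive a contradiction. The cases $s \in \{13, 17\}$ are handled by the computer in Lemma \ref{le:comp} (\ref{it:sqrt5sIs1}), so I assume $s \geq 21$. Since $5 \equiv s \equiv 5s \equiv 1 \pmod{4}$ with all pairwise gcds among $\{5, s, 5s\}$ congruent to $1 \pmod{4}$, the field is of type (B4a), so I write each $x_i = \frac{a_i + b_i\sqrt{5} + c_i\sqrt{s} + d_i\sqrt{5s}}{4}$ with $a_i, b_i, c_i, d_i \in \Z$ of the same parity and $a_i + d_i \equiv b_i + c_i \pmod{4}$. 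Comparing coefficients in $16 \sum x_i^2 = 16\alpha$ yields four integer equations; the key one is the trace equation $\sum (a_i^2 + 5 b_i^2 + s c_i^2 + 5 s d_i^2) = 80 + 8s$.

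Call $x_i$ \emph{all-odd} if $a_i, b_i, c_i, d_i$ are all odd, and let $N_o$ be the number of such summands. An all-odd summand contributes at least $6 + 6s$ to the trace equation, so $N_o (6+6s) \leq 80 + 8s$ forces $N_o \leq 1$ for $s \geq 21$. I then refine this modulo $8$: since $s \equiv 1 \pmod{4}$ and odd squares are $\equiv 1 \pmod{8}$, every all-odd summand contributes $6(1+s) \equiv 4 \pmod{8}$ to the left-hand side, while after writing $a_i = 2A_i$ etc.\ the integrality condition becomes $A_i + B_i + C_i + D_i \equiv 0 \pmod{2}$, which makes every all-even summand contribute a multiple of $8$. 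Since $80 + 8s$ is a multiple of $8$, this forces $N_o$ to be even; combined with $N_o \leq 1$ I conclude $N_o = 0$.

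With $N_o = 0$, writing $x_i = \frac{A_i + B_i \sqrt{5} + C_i \sqrt{s} + D_i \sqrt{5s}}{2}$ turns the trace equation into $\sum(A_i^2 + 5B_i^2 + s C_i^2 + 5s D_i^2) = 20 + 2s$, which for $s \geq 21$ yields $\sum C_i^2 + 5\sum D_i^2 \leq 2$; hence all $D_i = 0$ and $\sum C_i^2 \leq 2$. The case $\sum C_i^2 = 0$ would put $\alpha \in \Q(\sqrt{5})$, contradicting its nonzero $\sqrt{s}$ coefficient. For $\sum C_i^2 = 1$, the equations for the $\sqrt{s}$ and $\sqrt{5s}$ coefficients give $A_1 = C_1$ and $B_1 = -C_1$ with $C_1 = \pm 1$, but then the parity constraint $A_1 + B_1 + C_1 + D_1 \equiv 0 \pmod{2}$ degenerates to $C_1 \equiv 0 \pmod{2}$, a contradiction.

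The most involved remaining case, which I anticipate as the main obstacle, is $\sum C_i^2 = 2$; after sign-flips I may take $C_1 = C_2 = 1$ and $C_3 = C_4 = 0$, and the equations reduce to $A_1 + A_2 = 1$, $B_1 + B_2 = -1$, $\sum A_i^2 + 5\sum B_i^2 = 20$, $\sum A_i B_i = 1$, with $A_i + B_i$ odd for $i = 1, 2$ and $A_i \equiv B_i \pmod{2}$ for $i = 3, 4$. A parity count on $\sum A_i B_i = 1$ forces exactly one of the pairs $(A_3, B_3), (A_4, B_4)$ to consist of two odd integers; the resulting bounds $A_3^2 + 5B_3^2 \in \{6, 14\}$ and $A_4^2 + 5B_4^2 \in \{0, 4\}$ reduce the situation to a short list of subcases. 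In each of them I parameterize $A_1 = a$, $A_2 = 1-a$, $B_1 = b$, $B_2 = -1-b$ and compare the value $A_1 B_1 + A_2 B_2 = 2ab + a - b - 1$ with the required $1 - A_3 B_3$; the incompatibility in every subcase completes the contradiction.
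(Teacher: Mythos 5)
Your proposal is correct and follows the same skeleton as the paper's proof: expand the summands over the integral basis of the (B4a) field, compare the four coefficients, use the trace to force $\sum C_i^2+5\sum D_i^2\le 2$, and then dispose of $\sum C_i^2=1$ by a parity contradiction and $\sum C_i^2=2$ by a finite check, with $s=13,17$ delegated to Lemma \ref{le:comp}~(\ref{it:sqrt5sIs1}). You diverge in two places, both validly. To exclude all-odd (quarter-integer) summands the paper invokes Lemma \ref{le:quarterSquares} to force a \emph{second} such summand and then bounds traces, whereas you reach the same conclusion (and the same exceptional set $s\le 17$) by a mod-$8$ count in the trace equation combined with $N_o\le 1$; these are two faces of the same parity phenomenon. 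In the final case the paper pins down the two summands with $C_i=1$ to three candidates, kills two by total positivity of $\alpha-x_1^2-x_2^2$, and finishes by noting that the remainder $\frac{7+\sqrt5}{2}$ needs three squares in $\Z\bigl[\frac{1+\sqrt5}{2}\bigr]$; you instead enumerate $(A_3,B_3),(A_4,B_4)$ directly and derive the contradiction from $\sum A_iB_i=1$. I verified your subcases: $B_1^2+B_2^2=1$ and $A_1^2+A_2^2\in\{1,5\}$ are forced, the pairings $A_3^2+5B_3^2\in\{6,14\}$ versus $A_4^2+5B_4^2\in\{0,4\}$ are exhaustive, and in each surviving combination the value $2ab+a-b-1$ misses $1-A_3B_3$ (always $0$ since $B_4=0$ throughout), so the argument closes. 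The only cosmetic difference is that your proof is phrased for exactly four summands (padded with zeros), which suffices for $\ell(\alpha)\ge 5$, while the paper's runs for an arbitrary number of summands.
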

	\begin{proof}
		Let us suppose that
		\[
		\alpha = \Bigl(5 + \frac{s}{2}\Bigr) + \frac{\sqrt5}{2} + \frac{\sqrt{s}}{2} - \frac{\sqrt{5s}}{2} = \sum x_i^2.
		\]
		First assume that at least one of $x_i$ is of the form $\frac{a_i+b_i\sqrt5+c_i\sqrt{s}+d_i\sqrt{5s}}{4}$ for $a_i,b_i,c_i,d_i$ odd. Then, by Lemma \ref{le:quarterSquares}, $x_i^2$ is of the same form, which implies that at least one other $x_j$ must have the same property as well. Then by comparing traces we obtain
		\[
		5 + \frac{s}{2} \geq \atr(x_i^2 + x_j^2) \geq 2 \cdot \frac{1+5+s+5s}{16} = \frac{3+3s}{4},
		\]
		i.e.\ $17 \geq s$. This is a contradiction unless $s=13$ or $s=17$, which cases are handled in Lemma \ref{le:comp} (\ref{it:sqrt5sIs1}).
		
		So we have shown that all summands are in fact of the form
		\[
		x_i = \frac{a_i+b_i\sqrt5+c_i\sqrt{s}+d_i\sqrt{5s}}{2} \qquad \text{where $a_i+b_i+c_i+d_i$ is even}.
		\]
		Our next aim is to prove that $d_i=0$ for each $i$. Again, this is achieved by comparing traces: If at least one $d_i$ is nonzero, we get
		\[
		5 + \frac{s}{2} \geq 5s \sum \frac14 d_i^2 \geq \frac{5s}{4};
		\]
		this holds only for $s<7$, so in our situation all $d_i$ must indeed be zero.
		
		What can be the value of $\sum c_i^2$? It cannot be zero, since $\alpha \notin \Q(\!\sqrt5)$. The cases $\sum c_i^2 = 1$ and $\sum c_i^2 = 2$ require the most care; now we shall prove that $\sum c_i^2 \geq 3$ is impossible (for $s>10$). Indeed, it would yield
		\[
		5 + \frac{s}{2} \geq s \frac14 \sum c_i^2 \geq \frac{3s}{4}.
		\]
		
		Next we exclude the case $\sum c_i^2 = 1$. Without loss of generality we assume $x_1 = \frac{a_1+b_1\sqrt{5} + \sqrt{s}}{2}$; $a_1 \not\equiv b_1 \pmod2$. All the other summands belong to $\Q(\!\sqrt5)$, so we must have $\alpha - x_1^2 \in \Q(\!\sqrt5)$. After comparing the third and fouth coefficient of
		\[
		x_1^2 = \frac{a_1^2 + 5b_1^2 + s}{4} + \frac{a_1b_1}{2}\sqrt5 + \frac{a_1}2\sqrt{s} + \frac{b_1}{2}\sqrt{5s}
		\]
		with those of $\alpha$, we see $a_1 = 1$, $b_1 = -1$. This is a contradiction, since one of them must be even.
		
		So we have proven that $\sum c_i^2 =2$ is necessary. Without loss of generality we write $x_1 = \frac{a_1+b_1\sqrt{5} + \sqrt{s}}{2}$ and $x_2 = \frac{a_2+b_2\sqrt{5} + \sqrt{s}}{2}$ with $a_1 \not\equiv b_1$ and $a_2\not\equiv b_2 \pmod2$; the remaining $x_i$ are in $\Q(\!\sqrt5)$. Comparing traces, we get the equality
		\[
		5 + \frac{s}{2} = \frac14 \Bigl( \sum a_i^2 + 5 \sum b_i^2 + s \cdot 2 \Bigr),
		\]
		i.e.\ $20 = \sum a_i^2 + 5\sum b_i^2$. This yields a short list of possible combinations of $x_i$, and it is a simple matter to check by hand that the only one which indeed gives $\alpha$ is the one by which $\alpha$ was defined:
		%An alternative proof of this small part:
		
		%Supposing that $\sum b_i^2 =4$, we see that all $a_i$ must be zero. Since that is an even number, it is necessary that $b_1$ and $b_2$ are odd (since $c_1$ and $c_2$ are). This means $|b_1|=|b_2|=1$, implying that other two summands also have $|b_i|=|b_j|=1$; but since $a_i=c_i=0$, we get that $x_i$ is not an algebraic integer, a contradiction.
		
		%Since $\sum b_i^2 < 4$, we see that $|b_i|=1$ for each $i$.
		Since $\alpha - (x_1^2+x_2^2) \in \Q(\!\sqrt5)$, we compare coefficients:
		\[
		x_1^2+x_2^2 = \underbrace{\cdots\cdots}_{\in \Q(\!\sqrt5)} + \frac{a_1+a_2}{2}\sqrt{s} + \frac{b_1+b_2}{2}\sqrt{5s},
		\]
		so $a_1+a_2=1$ and $b_1+b_2=-1$. The latter equality together with $\sum b_i^2 \leq 4$ clearly gives only one solution: $b_1=-1, b_2=0$ (or vice versa). Then $a_1$ is even, and $a_2 = 1 - a_1$ is odd.
		
		To satisfy the inequality $a_1^2 + a_2^2 \leq 20 - 5b_1^2 = 15$, the only options are $a_1 = -2, 0, 2$. Thus there are only three possibilities:
		
		\begin{itemize}
			\item $x_1 = \frac{-2-\sqrt{5}+\sqrt{s}}{2}$, $x_2 = \frac{3 + \sqrt{s}}{2}$,
			\item $x_1 = \frac{-\sqrt{5}+\sqrt{s}}{2}$, $x_2 = \frac{1 + \sqrt{s}}{2}$,
			\item $x_1 = \frac{2-\sqrt{5}+\sqrt{s}}{2}$, $x_2 = \frac{-1 + \sqrt{s}}{2}$.
		\end{itemize}
		In the first case, $\alpha - x_1^2 - x_2^2 = \frac12 - \frac{\sqrt5}2$, which is not totally positive, hence cannot be a sum of squares -- a contradiction. Similarly, in the third case $\alpha - x_1^2 - x_2^2 = \frac52 + \frac{2\sqrt5}2$, which is also not totally positive.
		
		Hence we have proven that $x_1 = \frac{-\sqrt{5}+\sqrt{s}}{2}$, $x_2 = \frac{1 + \sqrt{s}}{2}$, which gives $\sum_{i \geq 3} x_i^2 = \frac{7+\sqrt5}{2}$. We already know that $x_i \in \Q(\!\sqrt5)$ for $i \geq 3$, so there must be at least three more summands since $\ell\bigl(\frac{7+\sqrt5}{2}\bigr)=3$ in $\Z\bigl[\frac{1+\sqrt5}{2}\bigr]$ by Theorem \ref{th:quadratic}. This concludes the proof.
	\end{proof}
	
	Now we solve both types of biquadratic fields containing $\sqrt2$.
	
	\begin{proposition} \label{pr:sqrt2}
		If $K=\BQ{2}{s}$ for $s \neq 3,5,7$, then $\P(\O_K)\geq 5$.
		
		In particular, depending on the value of $s$ modulo $4$, the following element has length $5$:
		\[
		\alpha = \begin{cases}
			1^2 + (1-\sqrt2)^2 + (2-\sqrt2)^2 + \Bigl(\frac12 + \sqrt2 + \frac{\sqrt{s}}{2}\Bigr)^2 + \Bigl(\frac{-1+\sqrt2-\sqrt{s}+\sqrt{2s}}{2}\Bigr)^2 & \text{if $13\neq s \equiv 1$};\\
			1^2 + \sqrt2^2 + (1-\sqrt2)^2 + \Bigl(1 + \frac{-\sqrt2 - \sqrt{2s}}{2}\Bigr)^2 + \Bigl(\frac{\sqrt2}{2}-\sqrt{s}+\frac{\sqrt{2s}}{2}\Bigr)^2 & \text{if $s \equiv 3$};\\
			1 + \sqrt2^2+(1+\sqrt{2})^2+\Bigl(\sqrt{2}+\frac{1+\sqrt{13}}{2}\Bigr)^2+\Bigl(2+\frac{1+\sqrt{13}}{2}+\frac{\sqrt{2}+\sqrt{26}}{2}\Bigr)^2 & \text{if $s = 13$}.\\
		\end{cases}
		\]
	\end{proposition}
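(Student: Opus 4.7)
The plan is to follow the template of Proposition \ref{pr:sqrt5sIs1}. A useful initial observation is that for $m=2$ no biquadratic field is of type (B4) (that type requires $p \equiv q \equiv 1 \pmod 4$), so no \enquote{quarter-integer} summands can occur and Lemma \ref{le:quarterSquares} is not needed: every putative summand can be written as $x_i = \frac{a_i + b_i\sqrt{2} + c_i\sqrt{s} + d_i\sqrt{2s}}{2}$, with parity conditions coming from the (B2) basis ($a_i \equiv c_i$, $b_i \equiv d_i \pmod 2$) when $s \equiv 1 \pmod 4$, and from the (B1) basis ($a_i, c_i$ even, $b_i \equiv d_i \pmod 2$) when $s \equiv 3 \pmod 4$. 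I would dispose of $s=13$ and of the small values $s \in \{11, 15, 17, 21, 29\}$ at the outset by citing Lemma \ref{le:comp}(\ref{it:sqrt2spec}) and (\ref{it:sqrt2}), so that only the residual large values of $s$ need the general argument.

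Assuming $\alpha = \sum_{i=1}^{4} x_i^2$, I would expand each square and compare the four rational coefficients of $\alpha$ in the basis $1, \sqrt{2}, \sqrt{s}, \sqrt{2s}$ with those of $\sum x_i^2$, obtaining four Diophantine equations. The trace identity $\sum(a_i^2 + 2b_i^2 + sc_i^2 + 2sd_i^2) = 4\atr\alpha$ immediately yields $\sum c_i^2 + 2\sum d_i^2 \leq 4\atr\alpha / s$. Since $\atr\alpha = 13 + s$ when $s \equiv 1$ and $\atr\alpha = 8 + 2s$ when $s \equiv 3$, this bound reads $\leq 4 + 52/s$ or $\leq 8 + 32/s$ respectively; for $s$ beyond a small explicit threshold only a short list of pairs $(\sum c_i^2, \sum d_i^2)$ survives.

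For each such possibility I would use Lemma \ref{le:KTZ} together with the parity conditions to narrow down the shapes of the non-$\Q(\sqrt{2})$ summands, and then match the $\sqrt{s}$ and $\sqrt{2s}$ coefficients of $\alpha$ (namely $2$ and $0$ when $s \equiv 1$, and $2$ and $-2$ when $s \equiv 3$) to pin those summands down uniquely up to sign and reordering. Subtracting them from $\alpha$ would leave a residue in $\Q(\sqrt{2})$ which must itself be a sum of the remaining squares, all belonging to $\Z[\sqrt{2}]$; using $\P(\Z[\sqrt 2]) = 3$ from Theorem \ref{th:quadratic} together with a direct trace inspection of the explicit residue (which, by design, equals the first three squares in the decomposition exhibited in the statement), I would show that at least three such rational-in-$\sqrt{2}$ summands are necessary, bringing the total to five.

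The hard part will be the case $s \equiv 3 \pmod 4$: the trace bound is markedly weaker than in Proposition \ref{pr:sqrt5sIs1}, so the case analysis branches significantly more, and the nonzero $\sqrt{2s}$ coefficient of $\alpha$ means that the equation $\sum(a_id_i + b_ic_i) = -4$ must be used actively rather than merely set to zero. Exploiting the \enquote{$a_i, c_i$ even} parity coming from the (B1) integral basis --- which effectively removes every other candidate quadruple --- will be essential to keep the enumeration of subcases manageable, and it is here that I expect most of the technical bookkeeping to concentrate.
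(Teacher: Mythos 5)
Your plan coincides with the paper's own proof, which likewise reduces this proposition to the template of Proposition \ref{pr:sqrt5sIs1}, splits into the cases $s\equiv1$ and $s\equiv3\pmod4$, and delegates the finitely many small values of $s$ (including $s=13$) to Lemma \ref{le:comp} (\ref{it:sqrt2}) and (\ref{it:sqrt2spec}); your parity conditions, the traces $\atr\alpha=13+s$ and $8+2s$, and the closing check that the $\Q(\sqrt2)$-residue ($10-6\sqrt2$, resp.\ $6-2\sqrt2$) requires three squares in $\Z[\sqrt2]$ are all correct. One caution: the crude bound $\sum c_i^2+2\sum d_i^2\le 4\atr\alpha/s$ by itself yields a threshold weaker than the paper's $s\le 30$ resp.\ $s\le 16$, so your subsequent per-case analysis (exploiting the $\sqrt2$- and $\sqrt{2s}$-coefficient equations together with the trace) must be sharp enough to eliminate intermediate square-free values such as $s=33,37,41$, which are not covered by the cited computational lemma.
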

	\begin{proof}
		The proof is analogous to that of Proposition \ref{pr:sqrt5sIs1}. One handles the cases $s \equiv 1$ and $s \equiv 3$ separately and arrives at inequalities $s \leq 30$ and $s \leq 16$, respectively; thus, to get a full contradiction, one needs to cover the cases $s=13,17,21,29$ and $s=11,15$, which is done in Lemma \ref{le:comp} (\ref{it:sqrt2}) and (\ref{it:sqrt2spec}).
	\end{proof}
	
	Finally, we turn our attention to fields containing $\sqrt3$. As a side note, remember that, unlike with $\sqrt2$ and $\sqrt5$, the lower bound $5$ is known not to always be optimal -- see Proposition \ref{pr:sqrt3shock}.
	
	\begin{proposition} \label{pr:sqrt3}
		If $K=\BQ{3}{s}$ for $s \neq 5, 7$, then $\P(\O_K)\geq 5$.
		
		In particular, the following element has length $5$:
		\[
		\alpha = \begin{cases}
			1^2 + 1^2 + (2+\sqrt3)^2 + \Bigl(\frac{1+\sqrt{s}}{2} \Bigr)^2 + \Bigl(1 + \frac{1+\sqrt{s}}{2} \Bigr)^2 & \text{if $s \equiv 1 \pmod4$};\\
			1^2 + 1^2 + (2+\sqrt3)^2 + \Bigl(\frac{\sqrt{s}+\sqrt{3s}}{2} \Bigr)^2 + \Bigl(1 + \frac{\sqrt{s}+\sqrt{3s}}{2} \Bigr)^2 & \text{if $s \equiv 2 \pmod4$};\\
			1^2 + 1^2 + (2+\sqrt3)^2 + \Bigl(\frac{\sqrt3+\sqrt{s}}{2} \Bigr)^2 + \Bigl(1 + \frac{\sqrt3+\sqrt{s}}{2} \Bigr)^2 & \text{if $s \equiv 3 \pmod4$}.\\
		\end{cases}
		\]
	\end{proposition}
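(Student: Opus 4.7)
That $\ell(\alpha)\le 5$ is immediate from the explicit decompositions in the statement; the task is to prove $\ell(\alpha)\ge 5$, i.e.\ that $\alpha$ is not a sum of four squares in $\O_K$. The argument parallels the one used for $\sqrt 5$ in Proposition~\ref{pr:sqrt5sIs1} and for $\sqrt 2$ in Proposition~\ref{pr:sqrt2}. Because $m=3\not\equiv 1\pmod 4$, no field $\BQ{3}{s}$ is of type (B4), and hence no quarter-integer summands need to be considered. Each $x_i\in\O_K$ can therefore be written uniformly as $x_i=\tfrac{1}{2}(a_i+b_i\sqrt 3+c_i\sqrt s+d_i\sqrt{3s})$ with $(a_i,b_i,c_i,d_i)\in\Z^4$ subject to explicit parity conditions determined by the integral basis (type (B1) if $s\equiv 2\pmod 4$, type (B3) otherwise).

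Suppose for contradiction that $\alpha=\sum_{i=1}^{4}x_i^2$ and compare the two sides coefficient by coefficient in the $\Q$-basis $(1,\sqrt 3,\sqrt s,\sqrt{3s})$ of $K$. The constant-term equation yields $\sum(a_i^2+3b_i^2)+s\sum(c_i^2+3d_i^2)=4\atr\alpha$, where $\atr\alpha$ equals $\tfrac{23+s}{2}$, $10+2s$ or $\tfrac{23+s}{2}$ in the three congruence classes. This bounds $\sum(c_i^2+3d_i^2)$ by an explicit constant (plus a vanishing $O(1/s)$ correction), so once $s$ exceeds an explicit threshold only a short list of multisets $\{(c_i,d_i)\}$ survives; note also that $\alpha\notin\Q(\sqrt 3)$ rules out the all-zero configuration.

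For each surviving configuration, the three remaining equations (from the $\sqrt 3$, $\sqrt s$ and $\sqrt{3s}$ coefficients), together with the parity constraints, narrow the summands $x_i$ with nonzero $(c_i,d_i)$ down to finitely many explicit candidates. Each candidate is then eliminated: either the remainder $\alpha-\sum_{i\in J}x_i^2\in\Q(\sqrt 3)$ fails to be totally positive (and so is not a sum of squares), or it agrees with an element of $\Z[\sqrt 3]$ of length at least $3$, typically the element $2+(2+\sqrt 3)^2=9+4\sqrt 3$, whose length is $3$ in $\Z[\sqrt 3]$ by Theorem~\ref{th:quadratic}, whereas only $4-|J|\le 2$ squares remain. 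The finitely many values of $s$ below the threshold in each congruence class are dispatched by the computer-assisted verification recorded in Lemma~\ref{le:comp}~(\ref{it:sqrt3}).

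The case $s\equiv 2\pmod 4$ is the main obstacle. There $\atr\alpha$ grows like $2s$ rather than $s/2$, so the trace bound only forces $\sum(c_i^2+3d_i^2)\le 8+O(1/s)$ (versus $\le 2+O(1/s)$ in the other two congruences), producing a significantly longer list of configurations to examine and a weaker control on individual $|a_i|,|b_i|$. A convenient organising principle is to regard $K$ as the quadratic extension of $\Q(\sqrt 3)$ obtained by adjoining $\sqrt s$, writing $x_i=A_i+B_i\sqrt s$ with $A_i,B_i\in\Q(\sqrt 3)$; the identity $\alpha=\sum x_i^2$ then splits into two equations, namely $\sum A_i^2+s\sum B_i^2$ equals the ``$\Q(\sqrt 3)$-part'' of $\alpha$, while $2\sum A_iB_i$ equals the ``$\sqrt s$-part'' of $\alpha$. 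This reduces a large part of the combinatorics to questions about sums of squares over the quadratic subfield $\Q(\sqrt 3)$, whose Pythagoras theory we already control through Theorem~\ref{th:quadratic}.
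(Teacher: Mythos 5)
Your plan follows the paper's proof essentially verbatim: the paper itself only says the argument is analogous to Proposition~\ref{pr:sqrt5sIs1}, derives the trace thresholds ($s\le 46$ for $s\equiv 1,3$ and $s\le 10$ for $s\equiv 2$, matching your bounds $\sum(c_i^2+3d_i^2)\le 2+O(1/s)$ resp.\ $\le 8+O(1/s)$), and dispatches the $5+1+6$ remaining fields via Lemma~\ref{le:comp}~(\ref{it:sqrt3}). Your outline — coefficient comparison in the basis $(1,\sqrt3,\sqrt s,\sqrt{3s})$, elimination of each surviving configuration by total positivity or by reduction to the length-$3$ element $9+4\sqrt3$ in $\Z[\sqrt3]$ — is exactly the intended argument, and in fact spells out more detail than the paper does.
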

	\begin{proof}
		Again, the proof is analogous to that of Proposition \ref{pr:sqrt5sIs1}. Each of the three cases must be done separately. In the first and in the third, one arrives at the inequality $s \leq 46$, while the second yields $s \leq 10$. These $5+1+6$ fields must be examined separately, see Lemma \ref{le:comp} (\ref{it:sqrt3}).
	\end{proof}
	
	It is interesting that while the used element $1^2 + 1^2 + (2+\sqrt3)^2$ is not a sum of two squares, it has another representation as a sum of three squares: $1^2 + (1+\sqrt3)^2 + (1+\sqrt3)^2$.
	
	\smallskip
	
	Now we have (almost) all the pieces needed to prove the main result of Section \ref{se:P>=5}. The structure of the proof should be clear from the structure of this section, but for clarity, we repeat it here:
	
	\begin{proof}[Proof of Theorem \ref{th:main2parts} (\ref{it:main(1)})]
		First suppose that $K$ contains none of $\sqrt2$, $\sqrt3$, $\sqrt5$, $\sqrt6$, $\sqrt7$ and $\sqrt{13}$. If $m \equiv 1 \pmod4$, we use Proposition \ref{pr:mIs1}. If $m \not\equiv 1 \pmod4$, most fields are covered in Proposition \ref{pr:mNot1Pis5}, and the exceptional cases where $s$ is equal to $m+4$, $m+8$ or $m+12$ are handled in Proposition \ref{pr:m+4etc}.
		
		On the other hand, suppose that $m \in \{2,3,5,6,7,13\}$ and $K$ is not one of the seven exceptional fields. The cases $m=2$ and $m=3$ are covered in Proposition \ref{pr:sqrt2} and Proposition \ref{pr:sqrt3}, respectively. If $m=5$ and $s \equiv 1 \pmod4$, Proposition \ref{pr:sqrt5sIs1} is used, while for $s \not\equiv 1$, Proposition \ref{pr:sqrt5sNot1} applies. Finally, for $m=6$ ($7$ or $13$, resp.) one exploits Proposition \ref{pr:sqrt6} (\ref{pr:sqrt7} or \ref{pr:sqrt13P>=6}, resp.).
	\end{proof}

	\section{Fields containing \texorpdfstring{$\sqrt5$}{root of 5}} \label{se:sqrt5}
	
	In this section, we take a closer look at fields containing $\sqrt5$. They are special since for most of them, we were able to determine the exact value of the Pythagoras number, namely $5$. (The four exceptional fields are $\BQ25$, $\BQ35$, $\BQ56$ and $\BQ57$; for the first two we expect $\P(\O_K)=3$, while for the latter two $\P(\O_K)=4$.) 
	
	In Subsection \ref{ss:strongAsymptotics}, we continue where we left off in the previous section by proving that $\P(\O_K)\geq 5$ for all but finitely many fields $K=\BQ{5}{s}$ where $s \equiv 2,3 \pmod4$. This turned out to be much more involved than the seemingly analogous cases handled in Subsection \ref{ss:235}; the method which we developed for proving Proposition \ref{pr:sqrt5GeneralStrongAsymptotics} deserves further investigation and can be employed in many other situations.
	
	Subsection \ref{ss:sqrt5upper} is untypical for this text as it is the only part where we prove an upper bound on the Pythagoras number. To achieve that, we first introduce the reader to the so-called $g$-invariants $g_{R}(n)$ of a ring $R$ and then explain how they, in general, provide upper bounds for Pythagoras numbers in field extensions, generalising a result by Kala and Yatsyna \cite{KY}. Finally, we prove that $g_{\O_F}(2)=5$ holds for $F=\Q(\!\sqrt5)$, a result published in \cite{SaJapan}.

	\subsection{Lower bound in the most difficult case}\label{ss:strongAsymptotics}
	
	The aim of this subsection is to show that with finitely many exceptions, $\P(\O_K) \geq 5$ holds for all biquadratic fields $K=\BQ{5}{s}$. Recall the notation $1<m<s<t$, which in this case means $s \neq 2,3$, $s$ is square-free, $5 \nmid s$.
	Since the case $s\equiv 1\pmod4$ was already handled in Proposition \ref{pr:sqrt5sIs1}, we achieve our goal by proving the following:
	
	\begin{proposition} \label{pr:sqrt5sNot1}
		Let $K= \BQ{5}{s}$ where $s\not\equiv 1 \pmod4$, $s \neq 6, 7$. %\footnote{Remember the notation $1<m<s<t$, i.e.\ $s \neq 2,3$, $s$ is square-free, $5 \nmid s$.} 
		Then $\P(\O_K)\geq 5$.
		
		In particular, $\ell(\alpha)=5$ holds for
		\[
		\alpha = 1^2 + \Bigl(\frac{1+\sqrt5}{2}\Bigr)^2 + \Bigl(\frac{1+\sqrt5}{2}\Bigr)^2 + (\fss + \sss)^2 + (\css + \sss)^2.
		\]
		The symbols $\floor{x}$ and $\ceil{x}$ denote the largest integer satisfying $n\leq x$ and the smallest integer satisfying $x\leq n$, respectively.
	\end{proposition}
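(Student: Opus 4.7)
The upper bound $\ell(\alpha)\leq 5$ is immediate from the decomposition stated, so I focus on the lower bound $\ell(\alpha)\geq 5$, i.e.\ that $\alpha$ cannot be written as a sum of four squares in $\O_K$. For $s\not\equiv 1\pmod 4$ with $\gcd(s,5)=1$, an inspection of the integral bases in Subsection~\ref{ss:biquadratic} (type (B2) if $s\equiv 2$, type (B3) if $s\equiv 3$) shows $\O_K=\O_F[\sss]$, where $F=\Q(\sqrt 5)$ and $\O_F=\Z[\omega]$ with $\omega=\frac{1+\sqrt{5}}{2}$. Hence every $x\in\O_K$ is uniquely of the form $x=u+v\sss$ with $u,v\in\O_F$. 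Assume for contradiction that $\alpha=\sum_{i=1}^{4}x_i^2$ with $x_i=u_i+v_i\sss$. Expanding and matching the $\sss$-free and $\sss$-proportional parts with $\alpha=(4+\sqrt 5+\fss^2+\css^2+2s)+2(\fss+\css)\sss$ yields
\[
\sum_{i=1}^{4}\bigl(u_i^2+sv_i^2\bigr)=4+\sqrt 5+\fss^2+\css^2+2s,\qquad \sum_{i=1}^{4}u_i v_i=2n+1,
\]
where $n=\fss$ and I have used $\fss+\css=2n+1$ since $s$ is not a square.

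The quantitative heart of the argument is a Cauchy--Schwarz-type inequality applied in both real embeddings of $F$. Set $P=\sum u_i^2$ and $Q=\sum v_i^2$, both totally non-negative in $\O_F$, and let $P^{(j)},Q^{(j)}$ denote their values in the $j$-th embedding. Cauchy--Schwarz gives $(2n+1)^2\leq P^{(j)}Q^{(j)}$ for $j=1,2$, and combined with the linear relation $P^{(j)}+sQ^{(j)}=R_j$, where $R_1=5+2n^2+2n+2s+\sqrt 5$ and $R_2$ is its Galois conjugate, each $Q^{(j)}$ is confined to a short interval of width $O(1/n)$ centred close to $2$. Writing $Q=Q_0+Q_1\omega$ with $Q_0,Q_1\in\Z$, the difference $Q^{(1)}-Q^{(2)}=\sqrt 5\,Q_1$ then forces $Q_1=0$ and $Q_0=2$ for all sufficiently large $s$. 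This two-embeddings squeeze is the new technique flagged at the beginning of the subsection; a one-embedding bound would leave several pairs $(Q_0,Q_1)$ open, and only the simultaneous use of both embeddings isolates $(Q_0,Q_1)=(2,0)$.

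With $Q=2$, the identity $\sum v_i^2=2$ in $\O_F$ forces, up to sign and permutation, exactly two of the $v_i$ to equal $\pm 1$ and the other two to vanish. After a harmless sign flip we may take $v_1=v_2=1$, $v_3=v_4=0$, and the second identity becomes $u_1+u_2=2n+1$. Writing $u_1=A+B\omega$ and using the $\Q$- and $\omega$-parts of the first identity together with total positivity, a short check shows $B\in\{0,\pm 1\}$ and $A\in\{n,n+1\}$. For each surviving pair the residue $u_3^2+u_4^2=\alpha-x_1^2-x_2^2$ is explicitly computable: for $B=0$ it equals $4+\sqrt 5$, and for $B=\pm 1$ it equals $2+\sqrt 5$. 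A direct verification in $\Z[\omega]$ shows $\ell_{\O_F}(4+\sqrt 5)=3$ (the decomposition $1^2+\omega^2+\omega^2$ being minimal), while $N_{F/\Q}(2+\sqrt 5)=-1$ means $2+\sqrt 5$ is not even totally non-negative, so certainly not a sum of squares. Either outcome contradicts having only two remaining summands, and the main obstacle is precisely the two-embeddings squeeze that underlies the case $(Q_0,Q_1)=(2,0)$. The small values of $s$ where the argument is not yet effective are covered by computer, see Lemma~\ref{le:comp}~(\ref{it:sqrt5sNot1}).
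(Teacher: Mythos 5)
Your proof is correct in substance but takes a genuinely different route to the key step than the paper. The paper (Propositions \ref{pr:sqrt5GeneralStrongAsymptotics} and \ref{pr:ourStrongAsymptotics}) bounds the two conjugates of $\alpha$ under $\sss\mapsto-\sss$ to get $|a_i-c_i\sss|<\kappa$, $|b_i\sqrt5-d_i\sqrt{5s}|<\kappa$, and then compares $\atr\alpha=4s+O(\sss)$ with $\sum\atr x_i^2=2\bigl(\sum c_i^2+5d_i^2\bigr)s+O(\sss)$ to force $\sum(c_i^2+5d_i^2)=2$. You instead exploit that $\O_K=\Z\bigl[\frac{1+\sqrt5}{2}\bigr][\sss]$ is free of rank $2$ over $\O_F$, isolate the cross-term equation $\sum u_iv_i=2n+1$, and apply Cauchy--Schwarz in \emph{both} real embeddings of $F$ to pin $\sum v_i^2$ down to $2$; this is a clean and, as far as I can tell, correct alternative (the discriminant computation really does give interval width $O(1/n)$, since the $n^3$- and $n^2k$-terms cancel), and it is quantitatively sharper -- your threshold on $s$ would be far below the paper's $3254$ (resp.\ $499$), though deferring small $s$ to Lemma \ref{le:comp} (\ref{it:sqrt5sNot1}) is fine either way. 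The endgame (two distinguished summands, residue needing three more squares) is essentially the paper's, which compares traces to get $36\geq(a_1-a_2)^2+5(b_1-b_2)^2$ and checks five candidates. What the paper's asymptotic method buys over yours is generality: Proposition \ref{pr:sqrt5GeneralStrongAsymptotics} handles arbitrary $\alpha_0,A_1,A_2$ and representations by any bounded number of squares, whereas your Cauchy--Schwarz step leans on the specific two-summand structure of this $\alpha$.

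One small repair is needed in your final enumeration: writing $u_1-u_2=C+D\omega$ with $C$ odd and $D$ even, the trace bound $C^2+CD+\frac32D^2\leq 9$ also admits $C=\pm3$ (both with $D=0$ and with $D=\mp2$), i.e.\ $A\in\{n-1,n+2\}$, which your list $A\in\{n,n+1\}$, $B\in\{0,\pm1\}$ omits. These extra candidates give residues $u_3^2+u_4^2\in\{\sqrt5,\,3\sqrt5\}$, which are not totally nonnegative, so they die by exactly the check you already apply to the $B=\pm1$ cases (and note that for $B=\pm1$ only half the sign combinations give $2+\sqrt5$; the others give $-\sqrt5$, excluded for the same reason). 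With that enumeration completed, the argument is sound.
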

	\begin{proof}
		The core of the proof is contained in  Proposition \ref{pr:ourStrongAsymptotics}. Once that result is proven, two tasks remain: First, one has to take care of the fields with $s \leq 3253$ where the proposition does not apply; as usual, this was done by our computer program, see Lemma \ref{le:comp} (\ref{it:sqrt5sNot1}). \notabene{At the cost of making the proof of Proposition \ref{pr:ourStrongAsymptotics} even more technical, it is possible to improve the bounds, see Observation \ref{ob:sqrt5Improvement}. Thus we actually used the computer only to check fields with $s \leq 499$.}
		
		Second, one has to exploit that for $s \geq 3254$, Proposition \ref{pr:ourStrongAsymptotics} allows only the following type of decomposition:
		\[
		\alpha = x_1^2+x_2^2 + \beta =  \Bigl(\frac{a_1+b_1\sqrt5}{2} + \sqrt{s}\Bigr)^2 + \Bigl(\frac{a_2+b_2\sqrt5}{2} + \sqrt{s}\Bigr)^2 + \beta, \quad\text{where $\beta \in \sum \O_{\Q(\!\sqrt5)}^2$.}
		\]
		We shall show that $b_1=b_2=0$ and $\frac12 a_1 = \fss$ and $\frac12 a_2 = \css$ (or vice versa). This suffices since then $\beta = 1^2 + \bigl(\frac{1+\sqrt5}{2}\bigr)^2 + \bigl(\frac{1+\sqrt5}{2}\bigr)^2$ which has length $3$ in $\O_{\Q(\!\sqrt5)}$ (remember that by Proposition \ref{pr:ourStrongAsymptotics} we can only use summands from this subfield).
		
		To achieve this, we expand brackets:
		\[
		\Bigl(\frac{a_1+b_1\sqrt5}{2} + \sqrt{s}\Bigr)^2 + \Bigl(\frac{a_2+b_2\sqrt5}{2} + \sqrt{s}\Bigr)^2 = \underbrace{\cdots\cdots}_{\in\Q(\!\sqrt5)} + (a_1+a_2)\sss + (b_1+b_2)\sqrt{5s};
		\]
		comparing this with $\alpha = \underbrace{\cdots\cdots}_{\in\Q(\!\sqrt5)} + 2\bigl(\fss+\css\bigr)\sss$, one sees that $a_1+a_2 = 2(\fss+\css)$ and $b_1+b_2=0$.
		
		To conclude the proof, one has to compare traces. As we explain below, the assumption $\atr \alpha \geq \atr (x_1^2 + x_2^2)$ together with the equalities $a_1+a_2 = 2(\fss+\css)$ and $b_1+b_2=0$ and parity conditions $a_1\equiv b_1$ and $a_2\equiv b_2 \pmod2$ leaves only five possible choices (if we fix that either $a_1>a_2$, or $a_1=a_2$ and $b_1\geq b_2$); namely, $(a_1,a_2,b_1,b_2)$ can either take the correct value $(2\css, 2\fss, 0, 0)$, or one of the following four:
		\begin{align*}
			&\bigl(2\css+2,2\fss-2,0,0\bigr); \qquad \bigl(\css+\fss,\css+\fss,1,-1\bigr);\\
			&\bigl(\css+\fss+2,\css+\fss-2,\pm 1, \mp 1\bigr).
		\end{align*}
		For the four incorrect possibilities, one indeed gets $\beta = \alpha - (x_1^2+x_2^2) \in \Q(\!\sqrt5)$ with nonnegative trace, but in none of the cases is $\beta$ totally nonnegative. This concludes the proof.
		
		To help the reader, we explain how to compare the traces efficiently. By evaluating the traces, one gets
		\[
		2s + \fss^2 + \css^2 + 4 \geq \frac{a_1^2+a_2^2 + 5(b_1^2+b_2^2)}{4} + 2s;
		\]
		on the right-hand side we use the general identity $2x^2 + 2y^2 = (x+y)^2+(x-y)^2$ together with the known values of $a_1+a_2$ and $b_1+b_2$ to obtain
		\[
		\fss^2 + \css^2 + 4 \geq \frac{(a_1-a_2)^2 + 5(b_1-b_2)^2}{8} + \frac{(\fss + \css)^2}{2}.
		\]
		Surprisingly, this is can be rewritten (for $\sqrt{s}\notin \Z$) as $36 \geq (a_1-a_2)^2 + 5(b_1-b_2)^2$, which, together with the parity conditions, leaves only the five possibilities listed above.
	\end{proof}
	
	Before proving Proposition \ref{pr:ourStrongAsymptotics}, we start with a general result which provides very strong information about the possible decompositions of a whole family of elements; this family contains also $\alpha$. Results of this type could be proved for extensions of fields other than $\Q(\!\sqrt5)$, but in no other case were we forced to use them. Avoiding them was practical, since counterexamples of this type lead to a rather large number of exceptional fields (compare part (\ref{it:sqrt5sNot1}) of Lemma \ref{le:comp} with other parts of the same lemma).
	
	\begin{proposition} \label{pr:sqrt5GeneralStrongAsymptotics}
		Let $\alpha_0,A_1,A_2 \in \O_{\Q(\!\sqrt5)}$ and $n \in \N$. Then there exists a bound $S(n)$ such that for $s>S(n)$, $s\not\equiv 1 \pmod4$, (square-free, $5 \nmid s$), the following holds:
		
		Denote $K = \BQ{5}{s}$. Whenever the number
		\[
		\alpha = \alpha_0 + \bigl(\floor{\sqrt{s}} + A_1 + \sqrt{s}\bigr)^2 + \bigl(\floor{\sqrt{s}} + A_2 + \sqrt{s}\bigr)^2
		\]
		is represented as a sum of at most $n$ squares in $\O_K$, then exactly two of the squares belong to $(\O_{\Q(\!\sqrt5)} + \sss)^2$ and the remaining lie in $(\O_{\Q(\!\sqrt5)})^2$.
	\end{proposition}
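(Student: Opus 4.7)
The starting point is the $\O_F$-module decomposition $\O_K = \O_F \oplus \O_F\sqrt{s}$ where $F = \Q(\sqrt 5)$. For $s \not\equiv 1 \pmod 4$, the integral basis from Subsection \ref{ss:biquadratic} satisfies $\frac{\sqrt s + \sqrt{5s}}{2} = \frac{1+\sqrt 5}{2}\sqrt s$, so each $x_i \in \O_K$ decomposes uniquely as $x_i = y_i + \zeta_i\sqrt s$ with $y_i,\zeta_i \in \O_F$. Expanding $x_i^2 = (y_i^2 + s\zeta_i^2) + 2y_i\zeta_i\sqrt s$ and comparing with $\alpha$ yields two $\O_F$-equations
\[
\sum_i (y_i^2 + s\zeta_i^2) = \mathcal{E}, \qquad \sum_i 2y_i\zeta_i = \mathcal{F},
\]
where $\mathcal{E} = \alpha_0 + (\floor{\sqrt s} + A_1)^2 + (\floor{\sqrt s} + A_2)^2 + 2s$ and $\mathcal{F} = 2(2\floor{\sqrt s} + A_1 + A_2)$. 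For each real embedding $\sigma$ of $F$ one has $\sigma(\mathcal{E}) = 4s + O(\sqrt s)$ and $\sigma(\mathcal{F}/2) = 2\sqrt s + O(1)$, with implicit constants depending only on $\alpha_0, A_1, A_2$.

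Since $y_i^2$ and $s\zeta_i^2$ are totally nonnegative in $F$, the first equation gives the trace bound $\Tr_{F/\Q}\bigl(\sum \zeta_i^2\bigr) \leq \Tr_{F/\Q}(\mathcal{E})/s = 8 + O(1/\sqrt s)$. The possible values of $\Tr_{F/\Q}(\zeta^2)$ for nonzero $\zeta \in \O_F$ are $2$ (only for $\zeta = \pm 1$), $3$ (only for $\zeta = \frac{\pm 1 \pm \sqrt 5}{2}$), and otherwise at least $10$. Hence for $s$ sufficiently large every nonzero $\zeta_i$ must lie in the finite unit set $\mathcal{U} = \{\pm 1,\ \frac{\pm 1 \pm \sqrt 5}{2}\}$; in particular, at most four of them are nonzero, independently of $n$.

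The decisive step is Cauchy--Schwarz in each real embedding $\sigma$ of $F$. Setting $w_\sigma := \sigma\bigl(\sum\zeta_i^2\bigr)$, we obtain
\[
\sigma(\mathcal{F}/2)^2 = \sigma\Bigl(\sum_i y_i\zeta_i\Bigr)^2 \leq \sigma\Bigl(\sum_i y_i^2\Bigr)\cdot\sigma\Bigl(\sum_i \zeta_i^2\Bigr) = \bigl(\sigma(\mathcal{E})-sw_\sigma\bigr)\,w_\sigma,
\]
which after inserting the asymptotics reduces to $4 \leq w_\sigma(4-w_\sigma) + O(1/\sqrt s)$. The quadratic $w(4-w)$ attains its maximum value $4$ exclusively at $w=2$, while the finiteness of $\mathcal{U}$ means $w_\sigma$ ranges over a fixed finite set of real numbers. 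Thus for $s \geq S(n)$ we must have $w_\sigma = 2$ in both embeddings, i.e.\ $\sum\zeta_i^2 = 2$ in $\O_F$. Each nonzero $\zeta_i^2$ is one of the totally positive numbers $1, \frac{3\pm\sqrt 5}{2}$, and the only decomposition of $2$ from this list is $1+1$; hence exactly two of the $\zeta_i$ are nonzero and each equals $\pm 1$, which translates directly into the stated dichotomy that exactly two of the $x_i^2$ lie in $(\O_F + \sqrt s)^2$ and the remaining lie in $\O_F^2$.

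The main obstacle is the final squeeze: the trace bound alone admits a wide range of configurations of $\zeta_i$ in $\mathcal{U} \cup \{0\}$, and only the simultaneous use of Cauchy--Schwarz in both embeddings of $F$, combined with the discreteness of the values $w_\sigma$, pins down the unique admissible configuration.
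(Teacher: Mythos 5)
Your proof is correct in its essential structure but takes a genuinely different route from the paper's. The paper works with rational coordinates $x_i = a_i+b_i\sqrt5+c_i\sqrt{s}+d_i\sqrt{5s}$, bounds the two conjugates of $\alpha$ under $\sqrt{s}\mapsto-\sqrt{s}$ by a constant to obtain $a_i\approx c_i\sqrt{s}$ and $b_i\sqrt5\approx d_i\sqrt{5s}$, and then sums the resulting asymptotics $\atr x_i^2 = 2(c_i^2+5d_i^2)s+O(\sqrt{s})$ over the $n$ summands --- which is exactly where the bound $S(n)$ acquires its dependence on $n$. You instead exploit the $\O_F$-module splitting $\O_K=\O_F\oplus\O_F\sqrt{s}$ and apply Cauchy--Schwarz in each embedding to the $\sqrt{s}$-component equation $\sum y_i\zeta_i = 2\floor{\sqrt{s}}+A_1+A_2$; since that inequality and all of your error terms are independent of the number of summands, your argument in fact proves the stronger statement that $S$ can be chosen independently of $n$ (the paper explicitly remarks that its own final step is the one place where $n$ enters). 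One slip: the claim that $\Tr_{F/\Q}(\zeta^2)\geq 10$ for nonzero $\zeta\notin\mathcal{U}$ is false --- $\zeta=\frac{3+\sqrt5}{2}$ gives $7$ and $\zeta=2$ gives $8$, both compatible with your trace bound $8+O(1/\sqrt{s})$ --- so the assertion that every nonzero $\zeta_i$ lies in $\mathcal{U}$ is not established at that point. Fortunately nothing later depends on it: the bound of at most four nonzero $\zeta_i$ needs only $\Tr(\zeta^2)\geq 2$, the discreteness of the values $w_\sigma$ follows because $\sum\zeta_i^2$ is a totally nonnegative element of $\O_F$ of trace at most $9$ (a finite set), and the final classification uses only $\zeta_i^2\preccurlyeq 2$, which by itself forces $\zeta_i=\pm1$. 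With that one sentence corrected, the proof stands.
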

	\begin{proof}
		First we observe that absolute values of two of the conjugates of $\alpha$ are bounded by a constant $k = k(\alpha_0,A_1,A_2)$ independent on $s$; this is clearly true for the two automorphisms of $K$ sending $\sqrt{s} \mapsto -\sqrt{s}$, since the conjugates of $\alpha_0$, $A_1$ and $A_2$ are only constants and $\floor{\sqrt{s}}-\sqrt{s}$ is bounded. From this one easily sees that if $\alpha = \sum x_i^2$ with $x_i = a_i+b_i\sqrt5+c_i\sss+d_i\sqrt{5s}$ \textbf{(where the coefficients belong to $\Q$, not necessarily $\Z$)}, then
		\begin{align*}
			|a_i-b_i\sqrt5-c_i\sss+d_i\sqrt{5s}| &< \sqrt{k},\\
			|a_i+b_i\sqrt5-c_i\sss-d_i\sqrt{5s}| &< \sqrt{k}.
		\end{align*}
		Triangle inequality then yields
		\begin{equation}\label{eq:2ineq}
			|a_i - c_i\sss| < \sqrt{k} \qquad \text{ and } \qquad |b_i\sqrt5 - d_i\sqrt{5s}| < \sqrt{k}.   
		\end{equation}
		
		We shall exploit this later. Now we need to find any constants $C,D \in \Q$ and $S_1 \in\N$ such that for $s > S_1$, we have $|c_i| \leq C$ and $|d_i| \leq D$. This is easily done by comparing traces, since $\atr\alpha$ is $4s + O(\sqrt{s})$ while $\atr x_i^2 \geq c_i^2s$ and $\atr x_i^2 \geq d_i^25s$; thus by choosing a large enough $S_1$ we can even deduce $c_i^2 \leq 4$ and $5d_i^2 \leq 4$. (Since $c_i,d_i \in \frac{\Z}{2}$, we have $C=2$ and $D=\frac12$; but the exact value of these constants is not important.)
		
		From now on consider only $s>S_1$. Our plan is to compare the asymptotic behaviour of $\atr\alpha$ and $\sum \atr x_i^2$. We already mentioned that $\atr\alpha$ can be written as $4s + x(s)$, where $|x(s)| \leq k_1 \sss + k_2$; here $k_1$ and $k_2$ are explicit constants depending only on $\alpha_0,A_1,A_2$.
		
		So it remains to analyse $\sum\atr x_i^2$, where $\atr x_i^2 = a_i^2+5b_i^2+sc_i^2+5sd_i^2$. Thanks to \eqref{eq:2ineq} we can replace the first two terms: There is a $\delta \in (-\sqrt{k},\sqrt{k})$ such that
		\[
		a_i^2 = (c_i\sss + \delta)^2 = sc_i^2 + 2\delta c_i\sss + \delta^2 = sc_i^2 + y,
		\]
		where $|y| \leq (2\cdot\sqrt{k} C)\sss + (\sqrt{k})^2$. Similarly $5b_i^2 = 5sd_i^2 + z$ where $|z| \leq (2\cdot\sqrt{k} D\sqrt{5})\sss + (\sqrt{k})^2$. All in all,
		\[
		\atr x_i^2 = (sc_i^2 + y) + (5sd_i^2 + z) + sc_i^2 + 5sd_i^2 = 2(c_i^2 + 5d_i^2)s + (y+z) = 2(c_i^2 + 5d_i^2)s + O(\sss).
		\]
		
		So far the number of squares in the decomposition played no role; what we have proved actually holds for any $x_i^2  \preccurlyeq \alpha$. Now, in the last step, the constants start depending on $n$:
		\begin{align*}
			\sum_{i=1}^n \atr x_i^2 = \sum_{i=1}^n \Bigl( 2(c_i^2 + 5d_i^2)s + O(\sss) \Bigr) &= 2\Bigl(\sum_{i=1}^n  c_i^2 + 5d_i^2\Bigr)s + n O(\sss)\\
			&= 2\Bigl(\sum_{i=1}^n  c_i^2 + 5d_i^2\Bigr)s + O(\sss).
		\end{align*}
		
		Since this also has to be equal to $\atr \alpha = 4s + O(\sqrt{s})$, we finally conclude that for $s>S(n)$ the coefficients in front of $s$ are equal:
		\[
		2 = \sum_{i=1}^n (c_i^2 + 5 d_i^2). 
		\]
		
		Since $c_i,d_i$ are either both in $\frac{\Z}{2} \setminus \Z$ or both in $\Z$, one easily sees that each nonzero summand in this sum is either $1^2+0 = 1$ or $\bigl(\frac12\bigr)^2 + 5\bigl(\frac12\bigr)^2 = \frac32$. However, to obtain $2$ as a sum of these numbers, the only choice is $1+1$. Therefore for each $i$ we have either $c_i=d_i=0$, or $|c_i|=1$, $d_i=0$, and the latter case happens for exactly two indices $i_1,i_2$. Since $x_i^2=(-x_i)^2$, we can choose $c_{i_1}=c_{i_2}=1$, which concludes the proof.
	\end{proof}
	
	The result we need to complete the proof of Proposition \ref{pr:sqrt5sNot1} is just the previous Proposition with concrete choices $\alpha_0=1 + \bigl(\frac{1+\sqrt5}{2}\bigr)^2 + \bigl(\frac{1+\sqrt5}{2}\bigr)^2 = 4+\sqrt5$, $A_1=0$, $A_2=1$, and with the effectively computed bound $S(4)$. The main reason for proving first the general version was to make the proof easier to read. Now, going through the previous proof step by step, computing the bounds explicitly (and being slightly more careful in order to get reasonably small values) yields the following:
	
	\begin{proposition} \label{pr:ourStrongAsymptotics}
		Denote $K = \BQ{5}{s}$ for $s \geq 3254$, $s\not\equiv 1 \pmod4$, (square-free, $5 \nmid s$). Then the following holds:
		
		Whenever the number
		\[
		\alpha = 1^2 + \Bigl(\frac{1+\sqrt5}{2}\Bigr)^2 + \Bigl(\frac{1+\sqrt5}{2}\Bigr)^2 + \bigl(\floor{\sqrt{s}} + \sss\bigr)^2 + \bigl(\css + \sss\bigr)^2
		\]
		is represented as a sum of four squares in $\O_K$, then exactly two of the squares belong to $(\O_{\Q(\!\sqrt5)} + \sss)^2$ and the remaining two lie in $(\O_{\Q(\!\sqrt5)})^2$.
	\end{proposition}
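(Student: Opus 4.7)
The plan is to specialize the proof of Proposition~\ref{pr:sqrt5GeneralStrongAsymptotics} to the data $\alpha_0 = 4+\sqrt{5}$, $A_1 = 0$, $A_2 = 1$, $n = 4$, and to carry out the constant-tracking that the general proof deliberately avoids. Writing $\alpha = \sum_{i=1}^{4} x_i^2$ with $x_i = a_i + b_i\sqrt{5} + c_i\sss + d_i\sqrt{5s}$, one first reads off from the integral basis (type (B2) for $s\equiv 2$, type (B3) for $s\equiv 3\pmod 4$) that $c_i, d_i \in \tfrac{1}{2}\Z$ and $c_i - d_i \in \Z$.

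The next step is to compute the two ``small'' conjugates of $\alpha$ exactly. Putting $\phi = \sss - \floor{\sqrt{s}} \in [0,1)$, one obtains $\sigma_3(\alpha) = 4+\sqrt{5} + \phi^2 + (1-\phi)^2 \le 5+\sqrt{5}$ and $\sigma_4(\alpha) = 4-\sqrt{5} + \phi^2 + (1-\phi)^2 \le 5-\sqrt{5}$, both uniform in $s$. From $x_i^2 \preccurlyeq \alpha$ we then get $|\sigma_j(x_i)| \le \sqrt{\sigma_j(\alpha)}$ for $j=3,4$, and the triangle inequality applied to $\sigma_3(x_i)\pm\sigma_4(x_i)$ delivers the explicit bounds $|a_i - c_i\sss| \le L$ and $|b_i - d_i\sss| \le L/\sqrt{5}$ with $L = \tfrac{1}{2}\bigl(\sqrt{5+\sqrt{5}} + \sqrt{5-\sqrt{5}}\bigr)$.

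For the quantitative trace comparison I would expand $a_i^2 = c_i^2 s + 2c_i\sss(a_i - c_i\sss) + (a_i-c_i\sss)^2$ together with the analogous identity for $5b_i^2$. Summing over $i$ and using Cauchy--Schwarz on $\sum|c_i|$ and $\sum|d_i|$ (both controlled via the easy trace bound $\sum c_i^2 + 5\sum d_i^2 \le 4 + O(1/\sqrt{s})$) yields an inequality $\bigl|\sum(a_i^2 + 5b_i^2) - s\sum(c_i^2 + 5d_i^2)\bigr| \le A\sss + B$ with explicit $A, B$. Combined with $\atr\alpha = \sum(a_i^2 + 5b_i^2) + s\sum(c_i^2 + 5d_i^2)$ and the elementary identity $\atr\alpha = 4s + 2\sss(1-2\phi) + (2\phi^2-2\phi+5)$, this produces an effective bound $\bigl|2 - \sum(c_i^2 + 5d_i^2)\bigr| \le C_1/\sss + C_2/s$ with explicitly computable $C_1, C_2$.

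Finally, under the constraint $c_i - d_i \in \Z$ the possible values of $c_i^2 + 5d_i^2$ form the discrete set $\{0, 1, 3/2, 7/2, 4, 5, 6, \ldots\}$, and a short enumeration shows that the only four-term sum equal to $2$ is $1+1+0+0$, while every other admissible four-term sum differs from $2$ by at least $1/2$. The bound $s \ge 3254$ is chosen precisely so that $C_1/\sss + C_2/s < 1/2$; this forces $\sum(c_i^2 + 5d_i^2) = 2$, hence $|c_{i_1}| = |c_{i_2}| = 1$ and $d_{i_1} = d_{i_2} = 0$ for exactly two indices and $c_j = d_j = 0$ otherwise, which is exactly the claim (after absorbing signs). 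The main obstacle is nothing conceptual but the bookkeeping of constants: crude estimates still yield some large threshold, and a finer analysis along the same lines is what permits the further improvement announced in Observation~\ref{ob:sqrt5Improvement}.
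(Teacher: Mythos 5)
Your proposal is correct and follows essentially the same route as the paper's own proof: bound the two conjugates of $\alpha$ under $\sqrt{s}\mapsto-\sqrt{s}$ by $5\pm\sqrt5$, deduce $|a_i-c_i\sqrt{s}|,\,|b_i\sqrt5-d_i\sqrt{5s}|<\kappa$, expand $a_i^2$ and $5b_i^2$ to turn the trace identity into $|s(4-2\Sigma)|=O(\sqrt{s})$ with explicit constants, and use the half-integrality of $\Sigma=\sum(c_i^2+5d_i^2)$ to force $\Sigma=2$ and hence the stated shape of the summands. The only cosmetic difference is that you control $\sum|c_i|,\sum|d_i|$ via Cauchy--Schwarz where the paper uses the pointwise bounds $|c_i|\le 2$, $|d_i|\le\tfrac12$; this is in the spirit of the sharpening in Observation \ref{ob:sqrt5Improvement} but changes nothing essential.
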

	\begin{proof}
		The reader has already seen the general version of this proof in Proposition \ref{pr:sqrt5GeneralStrongAsymptotics}. Thus we only include concrete values of some bounds as stepping stones.
		
		To avoid fractions, we denote $\phi = \frac{1+\sqrt5}{2}$ and $\overline{\phi} = \frac{1-\sqrt5}{2}$.
		
		If $\alpha'$ is the image of $\alpha$ in the automorphism of $K$ sending $\sqrt5 \mapsto -\sqrt5$, $\sss \mapsto -\sss$ and $\alpha''$ in the automorphism $\sqrt5 \mapsto \sqrt5$, $\sss \mapsto -\sss$, then clearly
		\[
		\alpha' = 1 + 2\overline{\phi}^2 + (\fss -\sss)^2 + (\css-\sss)^2 < 1 + 2\overline{\phi}^2 + 1 = 5 - \sqrt5
		\]
		and similarly
		\[
		\alpha'' < 2 + 2\phi^2 = 5 + \sqrt5.
		\]
		Thus
		\begin{align*}
			|a_i-b_i\sqrt5-c_i\sss+d_i\sqrt{5s}| &< \sqrt{5-\sqrt5},\\
			|a_i+b_i\sqrt5-c_i\sss-d_i\sqrt{5s}| &< \sqrt{5+\sqrt5}.
		\end{align*}
		By triangle inequality
		\begin{equation}\label{eq:2ineqVAR}
			|a_i - c_i\sss| < \kappa \qquad \text{ and } \qquad |b_i\sqrt5 - d_i\sqrt{5s}| < \kappa,    
		\end{equation}
		where $\kappa = \frac12 \Bigl(\! \sqrt{5-\sqrt5}+\sqrt{5+\sqrt5}\Bigr)$.
		
		Now we shall express the asymptotic behaviour of $\atr\alpha$. Denote $\{s\}=\sss - \fss$. Then
		\begin{align*}
			\atr\alpha &= 1 + 2\cdot\frac32 + \fss^2 + s + \css^2 + s\\
			&= 4 + 2s + (\sss-\{s\})^2 + (\sss + (1-\{s\}))^2\\
			&= 4 + 4s - 2\{s\}\sss + 2(1-\{s\})\sss + \{s\}^2 +  (1-\{s\})^2\\
			&= 4 + 4s + \lambda\sss + \mu\\
			&= 4s + \lambda\sss + (4+\mu),
		\end{align*}
		where $\lambda \in (-2,2)$ and $\mu \in (\frac12,1)$. Thus we got
		\begin{equation}\label{eq:traceAsymp}
			\mathopen|\atr\alpha - 4s\mathclose| < 2\sss + 5.
		\end{equation}
		
		We first use this inequality together with $\atr\alpha \geq s(c_i^2+5d_i^2)$ to find $S_1$ such that for $s>S_1$ we have $|c_i| \leq 2$ and $|d_i| \leq \frac12$. Suppose that one of these inequalities does not hold, i.e.\ either $|c_i| \geq \frac52$ or $|d_i|\geq 1$. Then $\atr\alpha \geq 5s$, giving $s < 2\sss + 5$, which is a contradiction already for $s \geq 12$.
		
		From now on consider only $s\geq 12$. From \eqref{eq:2ineqVAR} we know that there exists $\delta_i \in (-\kappa,\kappa)$ such that
		\[
		a_i^2 = (c_i\sss + \delta_i)^2 = sc_i^2 + 2\delta_i c_i\sss + \delta_i^2 = sc_i^2 + y_i,
		\]
		where $|y_i| \leq (2\kappa C)\sss + \kappa^2$. Similarly $5b_i^2 = 5sd_i^2 + z_i$ where $|z_i| \leq (2\kappa D\sqrt5)\sss + \kappa^2$. All in all,
		\[
		\atr x_i^2 = (sc_i^2 + y_i) + (5sd_i^2 + z_i) + sc_i^2 + 5sd_i^2 = 2(c_i^2 + 5d_i^2)s + w_i,
		\]
		%\begin{align*}
		%\atr x_i^2 &= (sc_i^2 + 2\delta c_i\sss + \delta^2) + (5sd_i^2 + 2\tilde{\delta} d_i\sqrt5\sss + \tilde{\delta}^2) + sc_i^2 + 5sd_i^2\\
		%&= 2(c_i^2 + 5d_i^2)s + 2(\delta c_i + \tilde{\delta}d_i\sqrt5)\sss + (\delta^2 + \tilde{\delta}^2)\\
		%&= 2(c_i^2 + 5d_i^2)s + \nu_i\sss + \xi_i,
		%\end{align*}
		where $|w_i| = |y_i+z_i| < \kappa(4+\sqrt5)\sss + 2\kappa^2$.
		%where we can write $w_i = \nu_i\sss + \xi_i$ for $|\nu_i| < 2\kappa(C+\sqrt5D) = \kappa(4+\sqrt5)$ and $|\xi_i| < 2\kappa^2$.
		
		Now, assuming that there are only four summands, and summing over all $i$, this yields
		\[
		\sum_{i=1}^4 \atr x_i^2 = 2\Bigl(\sum_{i=1}^4  c_i^2 + 5d_i^2\Bigr)s + \sum_{i=1}^4 w_i,%(\nu_i\sss + \xi_i),
		\]
		hence, since the left-hand side is equal to $\atr\alpha$, we conclude
		\[
		\Bigl|\atr\alpha - 2\Bigl(\sum_{i=1}^4  c_i^2 + 5d_i^2\Bigr)s \Bigr| < 4\bigl(\kappa(4+\sqrt5)\sss + 2\kappa^2\bigr).
		\]
		
		Finally we can draw conclusions about the value of $\Sigma := \sum_{i=1}^4 (c_i^2 + 5d_i^2)$: Comparing the last inequality with \eqref{eq:traceAsymp}, we obtain
		\[
		|s(4 - 2\Sigma)| < \bigl(2 + 4\kappa(4+\sqrt5)\bigr)\sss + (5 + 8\kappa^2).
		\]
		
		One easily sees that $\Sigma$ can only take values from $\frac{\Z}{2}$, since the same is true for each of the four summands. Therefore $\Sigma \neq 2$ would mean $|s(4-2\Sigma)| \geq s$. Hence if $\Sigma \neq 2$, then the previous inequality implies
		\[
		s < \bigl(2 + \kappa(16+4\sqrt5)\bigr)\sss + (5 + 8\kappa^2) \approx 56.29\sss + 42.89.
		\]
		
		This inequality holds only for $s \leq 3253$.
		
		Thus for $s \geq 3254$, it is necessary that $\Sigma = \sum_{i=1}^4 (c_i^2 + 5d_i^2) = 2$. Drawing the conclusion that without loss of generality $c_1=c_2=1$ and the other coefficients $c_i$, $d_i$ are zero is easy (and was done in the previous proof already).
	\end{proof}
	
	There are many ways how to improve the bound $3254$ significantly. In our presentation, we preferred the simplicity of the proof over the optimality of the result. However, in order to spare computation time in proving Lemma \ref{le:comp} (\ref{it:sqrt5sNot1}), it is better to use the following stronger bound, for which we only provide a sketch of the proof:
	
	\begin{observation} \label{ob:sqrt5Improvement}
		The condition $s \geq 3254$ in Proposition \ref{pr:ourStrongAsymptotics} can be relaxed to $s \geq 500$.
		
		To see this, one must go over the corresponding proof slower and more carefully. First show independently by comparing traces that $\Sigma = \sum_{i=1}^4 (c_i^2 + 5d_i^2) \leq 4$ unless $s \leq 32$. From this inequality deduce all possible values of $(c_1, d_1, \ldots, c_4, d_4)$. Use this to observe that $\sum \bigl(|c_i| + \sqrt5 |d_i|\bigr) \leq 2 + \sqrt5$. \notabene{This in fact holds even without the restriction that there are only four summands. The reason is that $c_i^2+5d_i^2 \neq 0$ can anyway hold at most four times.} 
		This enables to avoid the rough estimates $|c_i|\leq 2$, $|d_i| \leq \frac12$ and get a stronger inequality then the one listed in the proof:
		\[
		\Bigl|\atr\alpha - 2\Bigl(\sum_{i=1}^4  c_i^2 + 5d_i^2\Bigr)s \Bigr| < 2\kappa(2+\sqrt5)\sss + 4\cdot 2\kappa^2,
		\]
		leading\notabene{In fact, by a nice trick it can be shown that $2\kappa^2$ can be replaced by exactly $5$. But it would make the proof even longer, and it yields only a slight improvement: $s < 466.4$.} directly to
		\[
		|s(4 - 2\Sigma)| < \bigl(2 + 2\kappa(2+\sqrt5)\bigr)\sss + (5 + 8\kappa^2).
		\]
		This implies $\Sigma = 2$ unless $s < 499.8$.\notabene{By the way, I'm really curious whether the \emph{whole} bound can be done independently of the number of summands. I can improve $2\kappa^2$ to $5$, but after that I must multiply it by the number of summands. Yes, I can prove that the summands outside $\Q(\!\sqrt5)$ are at most 4, but what about the others?}
	\end{observation}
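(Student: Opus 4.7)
My plan is to tighten the proof of Proposition \ref{pr:ourStrongAsymptotics} at two points: the preliminary control on $\Sigma:=\sum_{i=1}^{4}(c_i^2+5d_i^2)$, and the estimate of the error terms $w_i$. For the first, I would combine $|\atr\alpha-4s|<2\sss+5$ with the trivial bound $\atr\alpha\geq s\Sigma$ to obtain $s(\Sigma-4)<2\sss+5$. Since each pair $(c_i,d_i)$ lies either in $\Z^2$ or in $(\Z+\tfrac12)^2$, the sum $\Sigma$ lies in $\tfrac12\Z$; so any failure of $\Sigma\leq 4$ forces $\Sigma\geq 9/2$, which yields $s/2<2\sss+5$ and hence $s\leq 32$. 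Thus from $s\geq 33$ onwards I may assume $\Sigma\leq 4$.

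Next I would enumerate the finite list of quadruples $(c_i,d_i)_{i=1}^{4}$ compatible with $\Sigma\leq 4$. The parity constraint makes the admissible values of $c_i^2+5d_i^2$ exactly $0,\,1,\,\tfrac32,\,\tfrac72,\,4$; running through all ways of writing $\Sigma\leq 4$ as a sum of four such pieces (four copies of $1$; a single $4$; one $\tfrac72$; two copies of $\tfrac32$ with at most one $1$; one $\tfrac32$ with up to two $1$'s; etc.) I would verify that
\[
\Theta:=\sum_{i=1}^{4}\bigl(|c_i|+\sqrt5\,|d_i|\bigr)\leq 2+\sqrt5,
\]
with equality attained (up to signs) only by the decomposition $\tfrac32+\tfrac32+1$ of $4$, i.e.\ by $(c_1,c_2,c_3,d_1,d_2,d_3)=(\tfrac12,\tfrac12,1,\tfrac12,\tfrac12,0)$. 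This is the main bookkeeping obstacle; every other pattern gives strictly smaller $\Theta$, the largest competing value being $\Theta=4<2+\sqrt5$ for four copies of $(\pm1,0)$.

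Having $\Theta\leq 2+\sqrt5$, I would return to the inequality $|w_i|\leq 2\kappa|c_i|\sss+2\kappa\sqrt5\,|d_i|\sss+2\kappa^2$ derived in the proof of Proposition \ref{pr:ourStrongAsymptotics} and sum \emph{over $i$ first}, rather than applying the coarse per-summand bounds $|c_i|\leq 2$, $|d_i|\leq\tfrac12$; this gives
\[
\sum_{i=1}^{4}|w_i|\leq 2\kappa\,\Theta\,\sss+8\kappa^2\leq 2\kappa(2+\sqrt5)\sss+8\kappa^2.
\]
Combining with $|\atr\alpha-4s|<2\sss+5$ yields
\[
|s(4-2\Sigma)|<\bigl(2+2\kappa(2+\sqrt5)\bigr)\sss+(5+8\kappa^2),
\]
and since $|\Sigma-2|\geq\tfrac12$ whenever $\Sigma\neq 2$, the conclusion $\Sigma=2$ is forced unless this inequality has $s$ as a solution. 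Plugging in $\kappa=\tfrac12\bigl(\sqrt{5-\sqrt5}+\sqrt{5+\sqrt5}\bigr)$ and solving the resulting quadratic-in-$\sss$ gives the threshold $s<499.8$, which recovers the claim. The remainder of the argument (passing from $\Sigma=2$ to the desired structural description of the four summands) is unchanged from the proof of Proposition \ref{pr:ourStrongAsymptotics}.
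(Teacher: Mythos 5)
Your proposal is correct and follows essentially the same route as the paper's own sketch: the preliminary bound $\Sigma\leq 4$ for $s\geq 33$, the enumeration of admissible pairs $(c_i,d_i)$ yielding $\sum\bigl(|c_i|+\sqrt5\,|d_i|\bigr)\leq 2+\sqrt5$ with the extremal case $\tfrac32+\tfrac32+1$, and the summed error estimate $2\kappa(2+\sqrt5)\sqrt{s}+8\kappa^2$ leading to the threshold $s<499.8$. The details you fill in (the half-integrality of $\Sigma$, the list $0,1,\tfrac32,\tfrac72,4$, and the numerical evaluation of $\kappa$) all check out.
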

	
	%Instead of directly proving that $\Sigma = 2$, one can straightforwardly prove that $\Sigma \leq 4$ (with very few exceptional $s$); and then one can obtain a much better bound for $c_1 + c_2 + c_3 + c_4$ than $4C = 4\cdot 2$, and similarly for the $d_i$. Through such a consideration, it actually turns out that the bound $S$ in Proposition \ref{pr:sqrt5GeneralStrongAsymptotics} can be chosen independently of $n$.
	
	%However, we preferred not to go through the tedious discussion which would result from this. Our choice how to get a better bound was to be more economical with the bounds on $\delta_i$ and $\tilde{\delta}_i$ for each fixed $i$. By finding that the \enquote{worst case} is $\delta_i=\frac{\sqrt{5+\sqrt5}+\sqrt{5-\sqrt5}}{2}$ and $\tilde{\delta}_i=\frac{\sqrt{5+\sqrt5}-\sqrt{5-\sqrt5}}{2}$ it is quite easy to get a better bound for $\atr x_i^2 - 2(c_i^2+5d_i^2)s$. In the end, one arrives at an inequality $s < k\sqrt{s} + 25$ with $k \approx 41.415$, which holds only for $s \leq 1764$.

	\subsection{Upper bound} \label{ss:sqrt5upper}
	
	Before stating a general result (Proposition \ref{pr:upperbound}) which provides an upper bound for Pythagoras numbers, we must introduce a generalisation of the Pythagoras number, the so-called \emph{$g$-invariants} of a ring. For $\Z$, they were introduced by Mordell, see \cite{Mo2}, their study being called \enquote{quadratic Waring's problem}. For recent results on them, see e.g.\ \cite{CI, KrY}.
	
	Let $R$ be a commutative ring. A homogeneous polynomial in $n$ variables over $R$ of degree two is called an $n$-ary \emph{quadratic form}; if it is of degree one, we call it a \emph{linear form}. A quadratic form $Q$ \emph{represents} $\alpha \in R$ if there exists a nonzero $\vec{x} = (x_1, \ldots, x_n) \in R^n$ such that $\alpha = Q(x_1, \ldots, x_n) = Q(\vec{x})$. If a form represents only totally positive elements, it is \emph{totally positive definite}; a \emph{totally positive semidefinite} form may also represent zero. The simplest totally positive definite $n$-ary quadratic form, sum of $n$ squares, is traditionally denoted by $I_n$. Thus, the fact that a number $\alpha\in R$ is a sum of, say, five squares can be rephrased as \enquote{$\alpha$ is represented by $I_5$ over $R$}. More generally, let $Q$ be an $n$-ary quadratic form and $A$ a $k$-ary quadratic form, $1 \leq k \leq n$. We say that $A$ is \emph{represented} by $Q$ (or $Q$ \emph{represents} $A$) if there exist linear forms $\L_1, \ldots, \L_n : R^k \to R$ such that
	\[
	Q\bigl(\L_1(\vec{x}), \ldots, \L_n(\vec{x}) \bigr) = A(\vec{x}).
	\]
	Clearly, $Q$ represents $\alpha\in R$ if and only if it represents the unary form $\alpha x^2$.
	Two quadratic forms are \emph{equivalent} if one can be obtained from the other by an invertible substitution.
	
	Let us now consider the set $\mathrm{Sq}_k$ of all $k$-ary quadratic forms which can be written as a sum of squares of linear forms, i.e.\ which are represented by $I_n$ for some, possibly large, $n$. Then, similarly to the definition of the Pythagoras number, we put
	\[
	g_R(k) = \min\{n : I_n \text{ represents all forms in } \mathrm{Sq}_k\}.\] If no such $n$ exists, we put $g_R(k) = \infty$; however, if $R$ is an order in a number field, then $g_R(k)$ is finite for every $k$. (For maximal orders, see \cite{Ic}; the general statement is \cite{KrY}, Corollary 1.3.)
	In particular, $g_R(1) = \P(R)$.
	
	Very few exact values of $g_R(k)$ are known. Mordell and Ko proved that $g_{\Z}(k) = k+3$ for $2 \leq k \leq 5$, see e.g.\ \cite{Ko}; much later, Kim and Oh showed $g_{\Z}(6)=10$ in \cite{KO}. In Theorem \ref{th:sasaki}, we prove $g_{\O_F}(2)=5$ for $F=\Q(\!\sqrt5)$; this was probably proven by Sasaki and published in \cite{SaJapan}. For discussion of values of $g_R(k)$ where $R$ is a global field, local ring or a nonreal maximal order, see the introduction of \cite{Ic}.
	
	The $g$-invariants provide upper bounds on the Pythagoras number. Kala and Yatsyna showed in \cite{KY}, Corollary 3.3, that $g_{\Z}(k)$ is an upper bound for $\P(\O)$ where $\O$ is any order of degree $k$; this is the upper bound $\P(\O) \leq 7$ which holds for all biquadratic orders. We exploit their idea to prove the following generalisation:
	
	\begin{proposition}\label{pr:upperbound}
		Let $R \supset S$ be commutative rings such that $R$ is a free module over $S$ of rank $k$. Then
		\[
		\P(R) \leq g_{S}(k).
		\]
	\end{proposition}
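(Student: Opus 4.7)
The plan is to mimic the Kala--Yatsyna argument (their Corollary 3.3 in \cite{KY}), but with $\Z$ replaced by the arbitrary subring $S$. Fix an $S$-basis $\omega_1,\dots,\omega_k$ of $R$ and let $\alpha\in\textstyle\sum R^2$, say $\alpha=\sum_{i=1}^{N}r_i^2$ with $r_i\in R$. Writing each $r_i=\sum_{j=1}^{k}s_{ij}\omega_j$ with $s_{ij}\in S$, the central observation is that the auxiliary $k$-ary form
\[
F(x_1,\dots,x_k)\;=\;\sum_{i=1}^{N}\Bigl(\sum_{j=1}^{k}s_{ij}x_j\Bigr)^{2}
\;=\;\sum_{j,l=1}^{k}\Bigl(\sum_{i=1}^{N}s_{ij}s_{il}\Bigr)x_jx_l
\]
has coefficients in $S$, and by construction it is a sum of (finitely many) squares of $S$-linear forms, so $F\in\mathrm{Sq}_k$.

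The next step is to apply the definition of the $g$-invariant. By $F\in\mathrm{Sq}_k$ and the very definition of $g_S(k)$, the form $I_{g_S(k)}$ represents $F$: there exist linear forms $\L_1,\dots,\L_{g_S(k)}$ in $k$ variables over $S$ with
\[
F(x_1,\dots,x_k)\;=\;\sum_{m=1}^{g_S(k)}\L_m(x_1,\dots,x_k)^{2}
\]
as a polynomial identity over $S$.

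The final step is the substitution trick. Because this is an identity of polynomials over $S$, it remains valid after plugging in any $k$-tuple from an $S$-algebra; $R$ is such an algebra, so I substitute $x_j:=\omega_j$. The left-hand side becomes $\sum_i\bigl(\sum_j s_{ij}\omega_j\bigr)^{2}=\sum_i r_i^2=\alpha$, while each $\L_m(\omega_1,\dots,\omega_k)$ is an $S$-linear combination of the $\omega_j$, hence lies in $R$. This exhibits $\alpha$ as a sum of $g_S(k)$ squares in $R$.

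There is no real obstacle here: the only thing one has to be honest about is that both sides of the $\L$-identity are genuine polynomial identities in $S[x_1,\dots,x_k]$, not merely identities of \emph{values} on $S$, which is indeed guaranteed by the usual convention in the definition of \enquote{representation} of quadratic forms (and which is why I stated $F$ as a polynomial rather than a function). Once that is clear, everything is purely formal, and the same proof obviously recovers the Kala--Yatsyna bound $\P(\O)\le g_\Z(k)$ on taking $S=\Z$ and $R=\O$ an order of degree $k$.
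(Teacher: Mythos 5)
Your proof is correct and follows exactly the same route as the paper's: form the auxiliary $k$-ary sum-of-squares form from the coefficient matrix, invoke the definition of $g_S(k)$ to rewrite it as a sum of $g_S(k)$ squares of linear forms over $S$, and substitute the basis elements back in. The only difference is that you carry out the argument for general $k$ while the paper writes it for $k=2$ and notes the general case is entirely analogous.
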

	\begin{proof}
		To simplify the notation, we assume $k=2$ (which is the case we shall need); the general proof is entirely analogous. Let $\beta_1,\beta_2 \in R$ be an integral basis of $R$ over $S$. Now take any $\alpha \in R$ and assume it is a sum of $N_\alpha \in \N$ squares; we show that it can be rewritten as a sum of $g = g_{S}(2)$ squares. We have
		\[
		\alpha = \sum_{i=1}^{N_\alpha} (a_i\beta_1 + b_i\beta_2)^2.
		\]
		Define a binary quadratic form $Q_{\alpha}$ over $S$ by
		\[
		Q_{\alpha}(X,Y) = \sum_{i=1}^{N_\alpha} (a_iX + b_iY)^2;
		\]
		since it is a sum of squares of linear forms over $S$, it can be written as
		\[
		Q_{\alpha}(X,Y) = \sum_{i=1}^{g} (c_iX + d_iY)^2.
		\]
		This is an equality of two polynomials, so it is valid after replacing $X,Y$ by elements of any commutative ring containing $S$. In particular, we can plug in the integral basis of $R$:
		\[
		\alpha = \sum_{i=1}^{N_\alpha}(a_i\beta_1 + b_i\beta_2)^2 = \sum_{i=1}^g (c_i\beta_1 + d_i\beta_2)^2.
		\]
		This is the desired representation of $\alpha$ as a sum of $g$ squares.
	\end{proof}

	\begin{remark}
		The condition that $R$ is free as an $S$-module is not necessary; it suffices that it is generated by $k$ elements. Since any torsion-free module of rank $r$ over a Dedekind domain is generated by $r+1$ elements, one gets the weaker estimate $\P(\O) \leq g_{\O_F}(r+1)$ for any field extension $L/F$ and any order $\O_F \subset \O \subset \O_L$. An interesting question is whether $r+1$ can be replaced by $r$ (as in the case where $\O_F$ is a PID).\footnote{This question was one of the starting points for the recent preprint \cite{KrY}. There, it is resolved as follows: If $\O_F$ is not a PID, it is more natural to use another version of the $g$-invariant (there denoted as $G_{\O_F}$) which is defined in terms of general quadratic lattices instead of just quadratic forms. In this setting, the inequality $\P(\O) \leq G_{\O_F}(r)$ indeed holds, and one also gets several other interesting inequalities for both versions of the invariant, vastly generalising our Proposition \ref{pr:upperbound}.}
	\end{remark}
	
	For the rest of this section, denote $F = \Q(\!\sqrt5)$. With the previous proposition, in order to obtain an upper bound $\P(\O) \leq 5$ for $\O \supset \O_F$, it remains to prove $g_{\O_F}(2)=5$. This is Theorem \ref{th:sasaki}. To prove it, we need a lemma from Sasaki's paper \cite{SaEng}:
	
	\begin{lemma}[\cite{SaEng}, Lemma 12] \label{le:sasaki}
		Let $F = \Q(\!\sqrt5)$ and let $Q$ be a binary quadratic form over $\O_F$. Then $Q$ is a sum of four squares of linear forms if and only if $Q$ is 
		\begin{enumerate}
			\item totally positive semidefinite,
			\item classical (i.e.\ the coefficient in front of $XY$ is in $2\O_F$),
			\item not equivalent to the form $G(X,Y) = 2X^2 + 2XY + 2\frac{1+\sqrt5}{2}Y^2$ over the completion $(\O_F)_{(2)}$, where $(2)$ is the dyadic place.
		\end{enumerate} 
	\end{lemma}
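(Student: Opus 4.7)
The plan is to prove the two directions separately. Necessity is essentially a formal verification; sufficiency is the real content and must go via a local-global argument.

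For necessity, suppose $Q = \sum_{i=1}^{4}(a_i X + b_i Y)^2$ with $a_i, b_i \in \O_F$. Then (1) is immediate, as any sum of squares evaluates to a totally non-negative element of $F$. For (2), the coefficient of $XY$ in the expansion is $2 \sum a_i b_i \in 2\O_F$. The only non-trivial point is (3): one must show directly that $G$ itself is not a sum of four squares of linear forms over the dyadic completion $(\O_F)_{(2)}$. This is a purely local statement, and I would settle it by computing a dyadic invariant of $G$ (either the Hasse--Witt invariant at $(2)$, or a refined invariant such as the Arf-type defect of the discriminant) and contrasting it with the invariants forced on any binary form representable by $I_4$ over $(\O_F)_{(2)}$.

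For sufficiency, assume (1), (2) and (3). Writing $Q$ as a sum of four squares of linear forms is equivalent to embedding the rank-$2$ $\O_F$-lattice $(L_Q, Q)$ isometrically into the standard rank-$4$ lattice $(\O_F^{\,4}, I_4)$. My plan is to verify representability place by place and then pass to the global statement via a class-number-one input. At each real place, (1) yields a real representation for free. At each finite non-dyadic prime $\mathfrak{p}$, the residue characteristic is odd: the reduction of $Q$ modulo $\mathfrak{p}$ is a classical binary form over the residue field (using (2)), which is a sum of two squares there; Hensel's lemma then lifts this to a representation of $Q$ by $I_2$, hence by $I_4$, over $(\O_F)_{\mathfrak{p}}$. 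The delicate place is the unique dyadic prime $(2)$; since $X^2 - X - 1$ is irreducible modulo $2$, the prime $2$ is inert in $\O_F$ and $(\O_F)_{(2)}$ is the unramified quadratic extension of $\Z_2$. Over this ring I would enumerate, by the standard dyadic Jordan decomposition, the equivalence classes of classical totally positive semidefinite binary forms, and check in each class whether it is represented by $I_4$. The content of the lemma is that exactly one class---that of $G$---fails to be represented, and condition (3) excludes precisely this class.

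The main obstacle is the final step: passing from local to global representation. For this I need that the genus of $I_4$ over $\O_F$ consists of a single equivalence class, so that local representation at every place of $F$ forces global representation. I would establish this either via an explicit Minkowski--Siegel mass computation for $I_4$ over $\Z\bigl[\tfrac{1+\sqrt5}{2}\bigr]$, or by citing the classical one-class-per-genus results for sums of squares over $\Q(\sqrt5)$ (in the Maass--Dzewas--Kneser tradition) and the lattice-representation extensions in the style of Hsia--Kitaoka--Kneser. Granting this single-class property, the three local verifications combine to yield an $\O_F$-linear isometry $L_Q \hookrightarrow I_4$, which is exactly the statement that $Q$ is a sum of four squares of linear forms.
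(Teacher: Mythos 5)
First, a point of comparison: the paper does not prove this lemma at all --- it is quoted from Sasaki \cite{SaEng} (his Lemma 12), and the surrounding text only records that the proof \enquote{is based on the fact that the genus of $I_4$ in $F$ contains only one class, so it suffices to determine which binary forms are represented locally}. Your outline follows exactly this strategy (one class in the genus of $I_4$ over $\O_F$, plus a place-by-place representability analysis with the dyadic prime singled out), so you are reconstructing Sasaki's argument rather than proposing a different route.

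There is, however, one step that would fail as written. At a non-dyadic prime $\mathfrak{p}$ you propose to reduce $Q$ modulo $\mathfrak{p}$, represent the reduction by two squares over the residue field, and lift by Hensel's lemma. This only works when $Q$ is $\mathfrak{p}$-unimodular. A binary form that is a sum of four squares of linear forms can have all coefficients divisible by $\mathfrak{p}$ --- for instance $a^2(X^2+Y^2)$ with $\mathfrak{p}\mid a$ --- in which case the reduction modulo $\mathfrak{p}$ is identically zero and Hensel's lemma gives nothing. The correct local statement at odd primes, namely that $I_4$ over $(\O_F)_{\mathfrak{p}}$ represents every integral binary lattice, is true, but it must come from the representation theory of lattices over non-dyadic local rings (via the Jordan splitting, as in O'Meara), not from reduction of the form itself. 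The remaining ingredients of your plan --- the dyadic classification of classical binary lattices and the identification of the class of $G$ as the unique local obstruction (cf.\ 93:11 of \cite{OMeara}), the verification that $G$ is not represented by $I_4$ dyadically, and the one-class property of the genus of $I_4$ over $\Z\bigl[\frac{1+\sqrt5}{2}\bigr]$ --- are the right ones, but they are asserted rather than carried out, so the proposal is a correct roadmap with one broken step rather than a complete proof.
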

	Its proof is based on the fact that the genus of $I_4$ in $F$ contains only one class, so it suffices to determine which binary forms are represented locally. Let us note that the conditions of $Q$ being positive semidefinite and classical are implicit in Sasaki's formulation, and instead of $G$, he uses the form $2X^2 + 2\bigl(\frac{1+\sqrt5}{2}\bigr)XY + 2Y^2$ equivalent to $G$ over $(\O_F)_{(2)}$.
	
	\begin{remark}
		Due to 93:11 of \cite{OMeara}, the form $G$ from the previous statement can be characterised as the unique (up to equivalence) unimodular binary form over $(\O_F)_{(2)}$ which is anisotropic and takes only values from $2(\O_F)_{(2)}$.
	\end{remark}
	
	Since the following necessary result by Sasaki is, to our best knowledge, only available in Japanese in \cite{SaJapan}, we provide both its full statement and proof.

	\begin{theorem}[Sasaki]\label{th:sasaki}
		Let $F=\Q(\!\sqrt5)$. Then
		\[
		g_{\O_F}(2)=5.
		\]
	\end{theorem}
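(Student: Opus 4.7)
The plan is to prove the two inequalities $g_{\O_F}(2)\le 5$ and $g_{\O_F}(2)\ge 5$ separately, using Lemma~\ref{le:sasaki} and the remark following it as the principal tools.

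For the upper bound, take any binary form $Q=\sum_{i=1}^{N}L_i^2$ over $\O_F$ (with linear forms $L_i=a_iX+b_iY$); such a $Q$ is automatically totally positive semidefinite and classical, so by Lemma~\ref{le:sasaki} the only obstruction to a representation as a sum of four squares is local equivalence with $G$ at $(\O_F)_{(2)}$. Assume we are in this exceptional case. I plan to split off a single square $L_j^2$ so that the remainder $Q-L_j^2=\sum_{i\ne j}L_i^2$, still a sum of squares, fails to be equivalent to $G$; Lemma~\ref{le:sasaki} then represents the remainder by four squares and hence $Q$ by five. The choice of $L_j$ is dictated by the observation that $Q\sim G$ forces the Gram determinant of $Q$ to be a $2$-adic unit, so not every $L_i$ can have both coefficients in $2\O_F$ (otherwise the Gram determinant would lie in $16\O_F$); picking $L_j$ with a $2$-unit coefficient, say $a_j$, the evaluation
\[
(Q-L_j^2)(1,0)=Q(1,0)-a_j^2
\]
is a $2$-adic unit, because $Q(1,0)\in 2(\O_F)_{(2)}$ while $a_j^2$ is a unit. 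Therefore $Q-L_j^2$ takes a value outside $2(\O_F)_{(2)}$ and cannot be equivalent to $G$, as required.

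For the lower bound, I propose the candidate form
\[
Q(X,Y)=2X^2+2XY+(5+\sqrt{5})Y^2,
\]
which is totally positive definite (its two discriminants $-36\mp 8\sqrt{5}$ are both negative). Using the identity $4+\sqrt{5}=2\phi^2+1$ with $\phi=\frac{1+\sqrt{5}}{2}$, one immediately writes
\[
Q=(X+Y)^2+X^2+(\phi Y)^2+(\phi Y)^2+Y^2,
\]
exhibiting $Q$ as a sum of five squares. The remaining task is to rule out four squares; by Lemma~\ref{le:sasaki} this reduces to showing $Q\sim G$ over $(\O_F)_{(2)}$, and the plan is to invoke the uniqueness assertion in the remark following Lemma~\ref{le:sasaki}. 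One checks that $Q$ is unimodular at $2$ (Gram determinant $9+2\sqrt{5}\equiv 1\pmod 2$) and takes values only in $2(\O_F)_{(2)}$ (all coefficients $2$, $2$, $5+\sqrt{5}=2(2+\phi)$ lie in $2\O_F$); the remaining property, anisotropy over the dyadic completion, will be the main obstacle.

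Anisotropy will require care, since the global total positivity of $Q$ does not imply $2$-adic anisotropy a priori. The plan is a direct valuation computation: after rescaling a hypothetical isotropic vector so that $\min(v_2(x),v_2(y))=0$, the fact that $v_2(5+\sqrt{5})=1$ implies that in every case except $v_2(x)=v_2(y)=0$ the three summands $2x^2$, $2xy$, $(5+\sqrt{5})y^2$ of $Q(x,y)$ have pairwise distinct $2$-adic valuations, so their sum is nonzero. In the remaining case, dividing $Q(x,y)=0$ by $2$ and reducing modulo $2$ yields the equation $x^2+xy+\phi y^2\equiv 0$ in $\O_F/(2)\cong\mathbb{F}_4$; since the image of $\phi$ is a primitive cube root of unity, a short check over the three nonzero elements of $\mathbb{F}_4$ rules out any solution. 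With anisotropy established, the remark identifies $Q$ with $G$ locally at $2$, Lemma~\ref{le:sasaki} forbids $Q$ from being a sum of four squares, and combining with the upper bound we conclude $g_{\O_F}(2)=5$.
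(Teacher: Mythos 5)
Your proof is correct and follows essentially the same route as the paper: both directions rest on Lemma~\ref{le:sasaki} together with the uniqueness characterisation of $G$ in the remark, and the upper bound is obtained by splitting off one square $L_j^2$ with a dyadic-unit coefficient so that the remainder fails the local equivalence with $G$. The only differences are cosmetic: for the lower bound the paper uses the witness $2X^2+2XY+(17+\sqrt5)Y^2=G+16Y^2$ and defers the dyadic equivalence to the remark, whereas you choose the smaller form $2X^2+2XY+(5+\sqrt5)Y^2$ and verify unimodularity, the value condition and anisotropy explicitly, which is a welcome addition of detail.
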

	\begin{proof}
		The easier inequality $g_{\O_F}(2) \geq 5$ can be proven either directly or using Lemma \ref{le:sasaki} by exhibiting any binary form which is a sum of squares and is equivalent to $G$ over the dyadic place. One such form\notabene{This form was rather randomly chosen; it is quite possible that even $G$ itself is a sum of squares in $\O_F$.} is $2X^2 + 2XY + \bigl(2\frac{1+\sqrt5}{2} + 16\bigr)Y^2 = (X+Y)^2 + X^2 + \bigl(2\frac{1+\sqrt5}{2} + 15\bigr)Y^2$; it is a sum of (five) squares since $2\frac{1+\sqrt5}{2} + 15$, as a totally positive element of $\O_{\Q(\!\sqrt5)}$, is a sum of (three) squares, and its equivalence to $G$ over the dyadic place follows e.g.\ from the previous remark.
		
		Now we shall take any binary quadratic form $Q(X,Y) = \sum_i \L_i^2(X,Y) = \sum_i (a_iX + b_iY)^2$ where $a_i,b_i \in \O_F$ and prove that it can be rewritten as a sum of five squares. Clearly, any form which is a sum of squares is totally positive semidefinite and classical; therefore, if $Q$ is not equivalent to $G$ over $(\O_F)_{(2)}$, even four squares suffice by Lemma \ref{le:sasaki}. Thus we may assume that $Q$ is equivalent to $G$ over $(\O_F)_{(2)}$. This means that $Q$ represents only elements of $2\O_F$.
		
		If one of $\L_i^2$ represents an element outside $2\O_F$, then so does $Q - \L_i^2$; therefore $Q - \L_i^2$ satisfies the conditions of Lemma \ref{le:sasaki} and can be represented by four squares, showing that $Q$ is a sum of five squares. It remains to show that it is absurd for all $\L_i^2$ to represent only elements of $2\O_F$: By plugging in $(X,Y) = (1,0)$ and $(0,1)$, one sees that both $a_i$ and $b_i$ lie in $2\O_F$; thus $\L_i^2$ takes only values from $4\O_F$. This contradicts the fact that $\sum \L_i^2$ represents $2$ over $(\O_F)_{(2)}$.
	\end{proof}
	
	Finally we can put the pieces together:
	
	\begin{theorem} \label{th:upperboundSqrt5}
		Let $K$ be a quadratic extension of $\Q(\!\sqrt5)$ and $\O$ any order in $K$ containing $\frac{1+\sqrt5}{2}$. Then $\P(\O)\leq 5$.
	\end{theorem}
	\begin{proof}
		By combining Proposition \ref{pr:upperbound} and Theorem \ref{th:sasaki}, one sees that $\P(R)\leq 5$ holds for any commutative ring $R$ which is a free module over $\O_{\Q(\!\sqrt5)} = \Z\bigl[\frac{1+\sqrt5}{2}\bigr]$ of rank $2$.
		
		Since the class number of $\Q(\!\sqrt5)$ is $1$, any finitely generated torsion-free module over $\Z\bigl[\frac{1+\sqrt5}{2}\bigr]$ is free. In particular, any ring which is a free $\Z$-module of rank $r$ and contains $\frac{1+\sqrt5}{2}$ is a free $\Z\bigl[\frac{1+\sqrt5}{2}\bigr]$-module of rank $\frac{r}{2}$. This concludes the proof.
	\end{proof}
	
	We reiterate that for biquadratic fields, the result is not only applicable to maximal orders; it also holds for any order of the form $\Z\bigl[\frac{1+\sqrt5}{2},\alpha\bigr]$ where $\alpha \notin \Q(\!\sqrt5)$ is an algebraic integer of degree $2$:
	
	\begin{corollary}
		For any biquadratic number field $K$ containing $\sqrt5$, there are infinitely many orders $\O\subset K$ satisfying $\P(\O) \leq 5$.
	\end{corollary}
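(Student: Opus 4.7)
The plan is to apply Theorem \ref{th:upperboundSqrt5} to an explicit infinite family of orders in $K$, each of which contains $\frac{1+\sqrt5}{2}$. My starting point will be to fix any $\beta \in \O_K$ with $\beta \notin \Q(\sqrt5)$; such an element exists because $K$ is a proper quadratic extension of $\Q(\sqrt5)$ (e.g.\ any element of the integral basis of $\O_K$ listed in Subsection \ref{ss:biquadratic} that does not lie in $\Q(\sqrt5)$ works). Since $\beta$ is integral over $\Z$ and hence over the integrally closed ring $\O_{\Q(\sqrt5)} = \Z\bigl[\frac{1+\sqrt5}{2}\bigr]$, its minimal polynomial over $\Q(\sqrt5)$ will be a monic quadratic with coefficients in $\Z\bigl[\frac{1+\sqrt5}{2}\bigr]$.

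For each positive integer $n$ I will form the candidate order
\[
\O_n \;:=\; \Z\Bigl[\tfrac{1+\sqrt5}{2},\, n\beta\Bigr].
\]
The monic quadratic relation for $\beta$ over $\Z\bigl[\frac{1+\sqrt5}{2}\bigr]$ scales to a monic quadratic relation for $n\beta$, so $\O_n$ is in fact the free $\Z\bigl[\frac{1+\sqrt5}{2}\bigr]$-module with basis $\{1,\, n\beta\}$; equivalently, it is the $\Z$-module with basis $\bigl\{1,\, \frac{1+\sqrt5}{2},\, n\beta,\, n\frac{1+\sqrt5}{2}\beta\bigr\}$. This makes $\O_n$ a subring of $\O_K$ of $\Z$-rank $4$, and therefore an order in $K$. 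Because $\frac{1+\sqrt5}{2}\in\O_n$, Theorem \ref{th:upperboundSqrt5} immediately yields $\P(\O_n)\leq 5$ for every $n\in\N$.

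To finish, I will show that the $\O_n$ are pairwise distinct. Since $\{1,\beta\}$ is a $\Q(\sqrt5)$-basis of $K$, the set $\bigl\{1,\, \frac{1+\sqrt5}{2},\, \beta,\, \frac{1+\sqrt5}{2}\beta\bigr\}$ is a $\Q$-basis of $K$. Supposing $\O_m\subseteq\O_n$, the inclusion $m\beta\in\O_n$ combined with the explicit $\Z$-basis of $\O_n$ lets me write $m\beta = a + b\frac{1+\sqrt5}{2} + cn\beta + dn\frac{1+\sqrt5}{2}\beta$ for integers $a,b,c,d$, and comparing coefficients in the $\Q$-basis forces $a=b=d=0$ and $cn=m$, so $n\mid m$. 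Hence $\O_m=\O_n$ implies $m=n$, and the chain $\O_1\supsetneq\O_2\supsetneq\O_4\supsetneq\O_8\supsetneq\cdots$ (or simply the entire family $\{\O_n\}_{n\in\N}$) provides infinitely many distinct orders of $K$ with Pythagoras number at most $5$.

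The argument presents no substantial obstacle once Theorem \ref{th:upperboundSqrt5} is in hand; the only point requiring care is distinctness, which reduces to a routine linear-independence check over $\Q$ after the correct $\Z$-basis for each $\O_n$ has been identified.
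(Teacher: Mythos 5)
Your proposal is correct and follows the same route as the paper: the corollary is deduced from Theorem \ref{th:upperboundSqrt5} applied to orders of the form $\Z\bigl[\frac{1+\sqrt5}{2},\alpha\bigr]$ with $\alpha\in\O_K\setminus\Q(\sqrt5)$, which is exactly the remark the paper makes just before stating the corollary. You merely make the infinitude explicit by taking $\alpha=n\beta$ and verifying pairwise distinctness, which is a correct and routine addition.
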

	
	Moreover, Theorem \ref{th:upperboundSqrt5} does not only apply to biquadratic fields. For fields which are not totally real, it gives nothing new (recall that any order which is not totally real has $\P(\O) \leq 5$ by \cite{Pf}); but it can be applied to other totally real quartic orders. Notably, it includes the maximal order $\Z\Bigl[\!\sqrt{\frac{5+\sqrt5}{2}}\Bigr]$, which, along with $\Z\bigl[\sqrt2,\frac{1+\sqrt5}{2}\bigr]$, is the only totally real quartic order where the local-global principle for sums of squares is satisfied (by Scharlau's dissertation \cite{Sch2}).

	\section{Six is almost always a lower bound} \label{se:P>=6}
	In this section, we follow up the previous results by showing that $\P(\O_K)=5$ is still not the typical behaviour. We prove the second part of Theorem \ref{th:main2parts}, namely:
	
	\begin{theorem} \label{th:P>=6}
		Let $F = \Q(\!\sqrt{m})$ be a real quadratic field with $\P(\O_F) = 5$, i.e.\ $m \neq 2,3,5,6,7$. Then there are only finitely many totally real biquadratic fields $K \supset F$ with $\P(\O_K)\leq 5$.
	\end{theorem}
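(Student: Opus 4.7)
The plan is to follow the blueprint of Section~\ref{se:coprime} and Subsection~\ref{ss:67}: for each biquadratic field $K \supset F$ apart from finitely many, I exhibit an element $\alpha = \alpha_0 + w^2 \in \O_K$ of length at least~$6$, in which $\alpha_0 \in \O_F$ is a fixed witness of length~$5$ and $w \in \O_K \setminus \O_F$ has the smallest possible absolute trace. Using Convention~\ref{no:mst}, $F \subset K$ means $m \in \{m_K, s_K, t_K\}$; since $m$ is fixed, the cases $m = s_K$ or $m = t_K$ cover only finitely many $K$, so I may assume $m = m_K$ and write $s = s_K$, $t = t_K$. By Theorem~\ref{th:quadratic} I take
\[
\alpha_0 = \begin{cases}
7 + (1+\sqrt{m})^2 & \text{if } m \not\equiv 1 \pmod 4, \\
7 + \bigl(\tfrac{1+\sqrt{m}}{2}\bigr)^2 & \text{if } m \equiv 1 \pmod 4,\ m \neq 13,
\end{cases}
\]
while for the borderline case $m = 13$ I use Peters' alternative $\alpha_0 = 3 + \bigl(\tfrac{1+\sqrt{13}}{2}\bigr)^2 + \bigl(1 + \tfrac{1+\sqrt{13}}{2}\bigr)^2$ (observe that $7 + \bigl(\tfrac{1+\sqrt{13}}{2}\bigr)^2$ has length only $3$ in $\O_{\Q(\sqrt{13})}$, since $7 = \bigl(\tfrac{1+\sqrt{13}}{2}\bigr)^2 + \bigl(\tfrac{1-\sqrt{13}}{2}\bigr)^2$). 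In every case $\ell_{\O_F}(\alpha_0) = 5$. The element $w$ is chosen depending on $s$ modulo~$4$: $w = \tfrac{1+\sqrt{s}}{2}$ if $s \equiv 1$, $w = \tfrac{\sqrt{m}+\sqrt{s}}{2}$ if $s \equiv m \pmod 4$ and $s \not\equiv 1$, and $w = 1 + \sqrt{s}$ otherwise.

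Suppose for contradiction that $\alpha = \sum_{i=1}^n x_i^2$ with $n \leq 5$, and write each summand as $x_i = \tfrac12(a_i + b_i\sqrt{m} + c_i\sqrt{s} + d_i\sqrt{t})$ subject to the parity conditions dictated by the integral basis of $\O_K$. Comparing absolute traces gives
\[
\atr \alpha_0 + \atr w^2 = \tfrac14\sum\bigl(a_i^2 + m b_i^2 + s c_i^2 + t d_i^2\bigr).
\]
Since $\atr \alpha_0$ depends only on $m$, $\atr w^2 \leq 1 + s$, and $t \geq s$ (because $t_0^2 \leq m$ implies $t = ms/t_0^2 \geq s$), this inequality bounds $\sum c_i^2 + \sum d_i^2$ by a constant $N_m$ depending only on $m$, provided $s$ exceeds an effective threshold $S_m$. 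For such $s$ only finitely many $(c_i, d_i)$-patterns survive the parity restrictions; a direct inspection of each pattern --- matching the $\sqrt{s}$- and $\sqrt{t}$-coefficients of $\alpha$ with those of $\sum x_i^2$ --- forces, up to sign, exactly one summand, say $x_{i_0}$, to equal $w$ and all remaining $x_i$ to lie in $\O_F$. Then $\alpha_0 = \sum_{i \neq i_0} x_i^2$ expresses $\alpha_0$ as a sum of at most $n - 1 \leq 4$ squares in $\O_F$, contradicting $\ell_{\O_F}(\alpha_0) = 5$.

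The hard part will be the final matching step. Unlike in Section~\ref{se:coprime}, where $\gcd(m, s) = 1$ forces $t = ms$ and the trace estimates leave very little slack, here $\gcd(m, s)$ can be any square-free divisor of $m$, so $t/s = m/t_0^2$ may be only slightly larger than $1$. This weakens the estimate and splits the problem into subcases according to the type (B1)--(B4) of $K$ and the congruence of $s$ modulo~$4$; each surviving $(c_i, d_i)$-pattern must be excluded by expanding $\sum x_i^2$ in the basis $(1, \sqrt{m}, \sqrt{s}, \sqrt{t})$ and comparing all four coordinates, in the style of the closing arguments of Propositions~\ref{pr:B1coprime} and~\ref{pr:sqrt7}. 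The finitely many fields with $s \leq S_m$ that fall outside the range of the uniform argument are absorbed into the allowed exceptions of the theorem, or, if necessary, handled by the algorithm of Subsection~\ref{ss:algorithms}.
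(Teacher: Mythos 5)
Your overall strategy --- take $\alpha_0\in\O_F$ of length $5$, add $w^2$ with $w\in\O_K\setminus\O_F$ of minimal trace, and force $w^2$ to appear in every short decomposition --- is exactly the paper's strategy, and your reduction to $m=m_K$ and your treatment of $m=13$ are both correct. The genuine gap is in your choice of $w$: you select it using only $s\bmod 4$, but the element of $\O_K\setminus\O_F$ whose square has minimal trace depends also on the relative size of $s_0=\gcd(m,t)$ and $t_0=\gcd(m,s)$, i.e.\ on the factorisation $m=s_0t_0$. For instance, when $m\equiv 3$ and $s\equiv t\equiv 2\pmod4$, the integral basis contains $\frac{\sqrt{s}+\sqrt{t}}{2}$, and $\atr\bigl(\frac{\sqrt{s}+\sqrt{t}}{2}\bigr)^2=\frac{(s_0+t_0)m_0}{4}$ is \emph{smaller} than $\atr(\sqrt{s})^2=t_0m_0$ precisely when $s_0<3t_0$; similarly $\frac{\sqrt{m}+\sqrt{t}}{2}$ beats $1+\sqrt{s}$ when $s_0<4t_0$, and in type (B4) the quarter-integer $\frac{1+\sqrt{m}+\sqrt{s}+\sqrt{t}}{4}$ beats $\frac{1+\sqrt{s}}{2}$ when $s_0<3t_0$. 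Whenever $m$ has a factorisation with $t_0<s_0<3t_0$ (e.g.\ $m=15$ or $m=35$), there are \emph{infinitely} many fields $K\supset F$ in which your $w$ is not trace-minimal, so the trace comparison no longer eliminates the competing patterns: summands with nonzero $\sqrt{t}$-coefficient survive both the trace bound and the matching of the $\sqrt{s}$- and $\sqrt{t}$-coordinates (one can exhibit explicit candidates such as $u+v\sqrt{m}+\frac{\sqrt{s}+\sqrt{t}}{2}$ with $u+vs_0=2$, $u+vt_0=0$), and ruling them out requires further ad hoc arguments that your sketch does not supply --- and that may not even succeed, since a non-minimal witness can genuinely fail, as Observation \ref{ob:m+4etcIsAProblem} shows for the analogous $\P\geq5$ witness in the families $s=m+4,m+8,m+12$.

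The paper resolves exactly this by splitting into twelve families according to the type (B1)--(B4), which of $m,s,t$ plays the role of $q$, \emph{and} the inequalities $s_0\gtrless3t_0$ (resp.\ $s_0\gtrless4t_0$), choosing a different $w$ in each (see the list before Proposition \ref{pr:P>=6witnesses} and Lemma \ref{le:oneOf12}); since for fixed $m$ there are only finitely many factorisations $m=s_0t_0$, each family is handled uniformly in $m_0$. Your proposal agrees with the paper's choice of $w$ only in the sub-families where $s_0$ is large relative to $t_0$ (in particular for $m$ prime), so as written it proves the theorem only for those $m$; to close the gap you must either adopt the trace-minimal $w$ in the remaining sub-families or give a genuinely new argument excluding the small-trace competitors.
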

	\begin{proof}
		For $m \neq 13$, it is a direct consequence of Proposition \ref{pr:P>=6witnesses} which also contains the appropriate definition of the element $\alpha$ with length $6$. The case $m=13$ is handled separately (but using the same idea) in Proposition \ref{pr:sqrt13P>=6}.
	\end{proof}
	
	The proof is concluded in Subsection \ref{ss:P>=6proof}, while Subsection \ref{ss:P>=6idea} explains the underlying main idea which may be exploited in many other situations besides biquadratic fields with a given quadratic subfield.
	
	The case $m = 13$ is handled separately since $\Q(\!\sqrt{13})$ has very few elements of length $5$ (see Theorem \ref{th:quadratic}); also, the obtained Proposition \ref{pr:sqrt13P>=6} is used in Section \ref{se:P>=5}. However, it should be noted that the main idea of the proof remains the same as in the other cases.
	
	Now we provide an important part of the proof -- for each biquadratic field with $m \neq 2,3,5,6,7,13$, we define the appropriate $\alpha$ which usually has length $6$. We always have $\alpha = \alpha_0 + w^2$ with $\alpha_0$ as in Section \ref{se:P>=5}; however, to find the appropriate definition of $w$, we must distinguish $12$ families of fields. The simple method by which this terrifyingly looking case distinction was constructed is explained in the next two subsections.
	
	The list of all the mutually disjoint families, and for each of them the appropriate choice of $\alpha_0$ and $w$, follows. To be concise, we write only e.g.\ \enquote{Type (B1)} to mean \enquote{The field $K$ is of type (B1)}. The letter $q$ has the same meaning as in Subsection \ref{ss:biquadratic}, i.e.\ in (B1) it just means $q \equiv 3 \pmod4$ and in (B2,3) it means $q \equiv 1 \pmod4$.
	
	\begin{enumerate}
		\item If $m \not\equiv 1 \pmod{4}$, we put $\alpha_0 = 7 + (1+\sqrt{m})^2$ and: \label{it:(1)}
		\begin{enumerate}
			\item Type (B1): \label{it:(a)}
			\begin{enumerate}
				\item $q=m$, i.e.\ $m\equiv 3 \pmod{4}$ \label{it:(i)}
				\begin{enumerate}
					\item $s_0>3t_0$: Put $w = 1+\sqrt{s}$.\label{it:(A)}
					\item $s_0<3t_0$: Put $w = 1 + \frac{\sqrt{s}+\sqrt{t}}{2}$.
				\end{enumerate}
				\item $q=s$, i.e.\ $s\equiv 3 \pmod{4}$
				\begin{enumerate}
					\item $s_0>4t_0$: Put $w = 1+\sqrt{s}$.\label{it:1aiiA}
					\item $s_0<4t_0$: Put $w = \frac{\sqrt{m}+\sqrt{t}}{2}$.\label{it:1aiiB}
				\end{enumerate}
				\item $q=t$, i.e.\ $t\equiv 3 \pmod{4}$: Put $w = \frac{\sqrt{m}+\sqrt{s}}{2}$.\label{it:(iii)}
			\end{enumerate}
			\item Type (B2,3):
			\begin{enumerate}
				\item $q=s$, i.e.\ $s\equiv 1 \pmod{4}$: Put $w = \frac{1+\sqrt{s}}{2}$.
				\item $q=t$, i.e.\ $t\equiv 1 \pmod{4}$: Put $w = \frac{\sqrt{m}+\sqrt{s}}{2}$.
			\end{enumerate}
		\end{enumerate}
		\item If $m \equiv 1 \pmod{4}$, we put $\alpha_0 = 7 + \bigl(\frac{1+\sqrt{m}}{2}\bigr)^2$ and: \label{it:(2)}
		\begin{enumerate}
			\item Type (B2,3):
			\begin{enumerate}
				\item $s_0>3t_0$: Put $w = 1+\sqrt{s}$.
				\item $s_0<3t_0$: Put $w = 1 + \frac{\sqrt{s}+\sqrt{t}}{2}$.
			\end{enumerate}
			\item Type (B4): \label{it:(2b)}
			\begin{enumerate}
				\item $s_0>3t_0$: Put $w = \frac{1+\sqrt{s}}{2}$.
				\item $s_0<3t_0$: \label{it:(2bii)}
				\begin{enumerate}
					\item Type (B4a): Put $w = \frac{1+\sqrt{m}+\sqrt{s}+\sqrt{t}}{4}$. \label{it:(2biiA)}
					\item Type (B4b): Put $w = \frac{1+\sqrt{m}+\sqrt{s}-\sqrt{t}}{4}$.
				\end{enumerate}
			\end{enumerate}
		\end{enumerate}
	\end{enumerate}

	Let us note that these families indeed cover all possibilities, since $s_0=4t_0$ cannot happen for $s_0$ square-free and $s_0=3t_0$ is possible only for $s_0=3$, $t_0=1$, leading to $m=3$ which violates the assumption $\P(\O_{\Q(\!\sqrt{m})})=5$.
	
	We conclude this part by formulating what remains to be proven.
	
	\begin{proposition}\label{pr:P>=6witnesses}
		Let $F=\Q(\!\sqrt{m})$ be a given field as in Theorem \ref{th:P>=6}, with $m\neq 13$, and for each totally real biquadratic field $K \supset F$ define $\alpha = \alpha_0+w^2$ as above. Then, with finitely many exceptions, $\ell(\alpha)=6$ in $\O_K$.
	\end{proposition}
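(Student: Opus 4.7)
The plan is to adapt the template employed in Propositions \ref{pr:mIs1}, \ref{pr:mNot1Pis5} and \ref{pr:m+4etc}. Suppose for contradiction that $\ell(\alpha) \le 5$ and write
\[
\alpha = \sum_{i=1}^{5} x_i^2, \qquad x_i = \tfrac{1}{2}\bigl(a_i + b_i\sqrt m + c_i \sqrt s + d_i \sqrt t\bigr),
\]
where the integer coefficients are subject to the parity conditions dictated by the integral basis of $K$. Using $\sqrt{st} = m_0\sqrt m$, $\sqrt{mt} = s_0\sqrt s$ and $\sqrt{ms} = t_0\sqrt t$, matching the $\sqrt m$-, $\sqrt s$- and $\sqrt t$-coefficients of $\alpha$ and $\sum x_i^2$ yields three equations of the shape
\[
\textstyle\sum a_i b_i + m_0 \sum c_i d_i = ?,\quad
\textstyle\sum a_i c_i + s_0 \sum b_i d_i = ?,\quad
\textstyle\sum a_i d_i + t_0 \sum b_i c_i = ?,
\]
the right-hand sides of which are read off from $\alpha_0$ and $w^2$.

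My first step would be the trace inequality
\[
4\atr\alpha = \sum \bigl(a_i^2 + mb_i^2 + sc_i^2 + td_i^2\bigr),
\]
whose left-hand side is of the magnitude of $\atr w^2$. The twelve-case list is engineered so that $\atr w^2$ is always the smallest amongst the candidates $1+s$, $\tfrac{1+s}{4}$, $\tfrac{m+s+t}{16}$ compatible with the integral basis; the thresholds $s_0 \lessgtr 3t_0$ and $s_0 \lessgtr 4t_0$ are exactly where one candidate becomes cheaper than another. With this optimal choice, for $t = m_0 s_0$ large enough the trace inequality pins $\sum c_i^2$ and $\sum d_i^2$ to the minimum values consistent with the $\sqrt s$- and $\sqrt t$-coefficient equations. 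Combining these tight bounds with the coefficient equations and the parity conditions then forces, in each family, the existence of a single index $i_0$ for which (up to sign) $x_{i_0} = w$, with all remaining $x_i$ lying in $\O_F$.

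Once this rigidity is established, the remaining task is to write $\alpha_0 = \sum_{i \neq i_0} x_i^2$ as a sum of at most four squares inside $\O_F$. But $\alpha_0$ equals $7 + (1+\sqrt m)^2$ or $7 + \bigl(\tfrac{1+\sqrt m}{2}\bigr)^2$, which has length exactly $5$ in $\O_F$ by Theorem \ref{th:quadratic} since $m \notin \{2,3,5,6,7,13\}$; this contradiction gives $\ell(\alpha)\ge 6$, while the matching upper bound $\ell(\alpha) \le 6$ is immediate because $\alpha_0$ is itself a sum of five squares. I expect the main obstacle in families (\ref{it:(a)})(\ref{it:(i)})(B), (\ref{it:1aiiB}), (\ref{it:(iii)}) and (\ref{it:(2bii)}), where $w$ mixes $\sqrt s$ and $\sqrt t$: there both cross-term sums $\sum b_i c_i$ and $\sum b_i d_i$ must be delicately disentangled using the parities prescribed by the integral basis. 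In the quarter-integer subcases of type (B4), Lemma \ref{le:quarterSquares} is crucial -- quarter-integer summands can occur only in pairs sharing the same residue pattern, and the sign in $w = \tfrac{1+\sqrt m+\sqrt s\pm\sqrt t}{4}$ is precisely what forces the only admissible such pair to degenerate to $w^2$ alone, distinguishing (\ref{it:(2biiA)}) from its (B4b) counterpart. The finitely many excluded fields are those for which $t$ is too small for the trace inequality to bite, and they can be swept up by the computer algorithm of Subsection \ref{ss:algorithms}.
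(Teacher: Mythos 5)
Your proposal is correct and follows essentially the same route as the paper: a trace comparison (with $m_0$, equivalently $t$, large for each fixed factorisation $m=s_0t_0$) pins down $\sum c_i^2$ and $\sum d_i^2$, the coefficient equations and parity conditions then force one summand to be $\pm w$ with all others in $\O_F$, and the conclusion follows from $\ell(\alpha_0)=5$ in $\O_F$ (Theorem \ref{th:quadratic}); the paper likewise reduces to twelve family-specific lemmata and only writes out one (branch \eqref{it:(2biiA)}, where the sign in $w$ is indeed what kills the spurious candidate, exactly as you anticipate). The only cosmetic slip is writing $x_i$ with denominator $2$ at the outset, which must be denominator $4$ in the type (B4) families — but you address those via Lemma \ref{le:quarterSquares} anyway.
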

	\begin{proof}
		For a given $m$, there are only finitely many biquadratic fields $K$ where $m$ is not the smallest of the three square-free integers whose square roots lie in $K$; disregarding them, we may assume that the usual Convention \ref{no:mst} holds. Clearly, every $m$ corresponds to only finitely many choices of $(t_0,s_0)$.
		
		After realising this, it suffices to prove twelve separate lemmata, where for example the first of them, with code name \eqref{it:(A)}, claims: \enquote{Let $s_0>t_0\geq 1$ be two coprime square-free numbers such that $s_0t_0 \equiv 3 \pmod4$ and $s_0>3t_0$. Put $\alpha_0=7+(1+\sqrt{s_0t_0})^2$. If we take any large enough square-free $m_0$ coprime with $s_0t_0$, and then denote $K=\BQ{s_0t_0}{m_0t_0}$ and put $w=1+\sqrt{m_0t_0}$, then $\ell(\alpha_0+w^2) = 6$ in $\O_K$.}
		
		Another of these twelve statements, Lemma \ref{le:oneOf12}, is formulated and proven in Subsection \ref{ss:P>=6proof}. The others are omitted, since all the twelve proofs are very similar, and after understanding one of them, it is not difficult to write any of the others.
	\end{proof}

	\subsection{The idea} \label{ss:P>=6idea}
	
	Here we suggest a general strategy which applies in many situations when one examines $\P(\O_K)$ where $K$ runs over extensions of a given field $F$ with an already known Pythagoras number $\P(\O_F)$. This strategy was applied to obtain the appropriate definition of $\alpha$ for the situation of Theorem \ref{th:P>=6}, but it can be tried for any field, not just the biquadratic ones. The aim is to find conditions under which $\P(\O_K) \geq \P(\O_F) + 1$ can be proven: For example, $\P(\Z)=4$, and indeed, $\P(\O_{\Q(\!\sqrt{n})}) = 5$ holds for almost all positive $n$. Or, as we have already seen in Subsection \ref{ss:67}: For $F = \Q(\!\sqrt6)$ or $\Q(\!\sqrt7)$, where $\P(\O_F)=4$, almost all biquadratic fields $K \supset F$ have $\P(\O_K) \geq 5$.
	
	The way of constructing the \enquote{witness} $\alpha \in \O_K$ which probably has length $P(\O_F)+1$, follows: First, one takes $\alpha_0 \in \O_F$ of length $\P(\O_F)$ in $\O_F$. (It is convenient to use one with the minimal trace, but any $\alpha_0$ can work.) It is reasonable to hope that in most extensions $K \supset F$, $\alpha_0$ will also require $\P(\O_F)$ squares. For example, the only real quadratic fields where $7$ can be written as a sum of less than four integral squares are those generated by $\sqrt2$, $\sqrt3$, $\sqrt5$, $\sqrt6$, $\sqrt7$ and $\sqrt{13}$; and a similar statement for our situation where $F$ is quadratic and $K$ biquadratic was proven in Subsections \ref{ss:P>=5mEquiv1} and \ref{ss:P>=5mEquiv23}. However, we need to find an element with length $\P(\O_F)+1$. This is often achieved by choosing $w^2 \in \O_K \setminus \O_F$ where the trace of $w^2$ is the minimal possible among such elements. Then one defines $\alpha = \alpha_0 + w^2$ and hopes to prove that any representation of $\alpha$ as a sum of squares uses $w^2$ as one of the summands. This has a good chance of success if $\Tr \alpha_0$ is much smaller than $\Tr w^2$: In such a case, $w^2$ will be one of the very few squares totally smaller or equal to $\alpha$ and not belonging to $\O_F$.
	
	This strategy was very successful in Peters' examination of quadratic fields as extensions of $\Z$: The choice $\alpha_0 = 7$ and $w = (1+\sqrt{m})$ or $w = \bigl(\frac{1+\sqrt{m}}{2}\bigr)$, see Theorem \ref{th:quadratic}, led to an element of length $5$ in all but six maximal orders, and the analogy for non-maximal orders succeeded every time except of $\Z[\sqrt5]$, see also Observation \ref{ob:betterQuadratic}.
	
	Let us note, however, that there is no guarantee that this strategy will indeed work -- as we have seen in Subsection \ref{ss:m+4,m+8,m+12}, there are infinitely many biquadratic fields where even the weaker statement $\ell(\alpha_0)=5$ fails. This is also why the proof in the next subsection had to be carefully divided into twelve different branches depending on the integral basis and on $m$, and why it was necessary to perform the proof separately for each of these twelve families. The most troublesome situation occurs if the choice of $w^2$ is not unique: In such a case it might be difficult to show that the rejected candidates for $w^2$ cannot be used in the decomposition of $\alpha$ instead of $w^2$. But the strategy turned out to be successful in all twelve families. 
	
	%, and what is more: These considerations are what lies behind Peters' If we wanted to examine the Pythagoras number of a real quadratic field $\Q(\!\sqrt{m})$, this construction would lead us directly to Peters' result: If $\P(\O_{\Q(\!\sqrt{m})}) \geq 5$, one can prove it using the element $\alpha = 7 + (1 + \sqrt{m})^2$ if $m \not \equiv 1 \pmod4$ or $\alpha = 7 + \Bigl(\frac{1+\sqrt{m}}{2}\Bigr)^2$ if $m \equiv 1 \pmod4$.
	
	We conclude this subsection by illustrating how it applies in the exceptional case $m=13$.
	
	\begin{proposition}\label{pr:sqrt13P>=6}
		Let $K=\BQ{13}{s}$. Then $\P(\O_K)\geq 6$.
		
		In particular, $\ell(\alpha)=6$ holds for $\alpha = \alpha_0 + w^2$, where $\alpha_0 = 3+\bigl(\frac{1+\sqrt{13}}{2}\bigr)^2+\bigl(1+\frac{1+\sqrt{13}}{2}\bigr)^2 = 12 + 2\sqrt{13}$ and
		\[
		w = \begin{cases}
			\frac{1+\sqrt{s}}{2} & \text{if $s \equiv 1 \pmod4$},\\
			1+\sqrt{s} & \text{if $s \equiv 2,3 \pmod4$}.
		\end{cases}
		\]
	\end{proposition}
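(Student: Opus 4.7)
The plan is to follow the template of Propositions \ref{pr:sqrt7} and \ref{pr:sqrt6}: assume a representation $\alpha = \sum_i x_i^2$ in $\O_K$ and show that exactly one summand equals $\pm w$ while every remaining $x_i$ lies in $\O_{\Q(\sqrt{13})}$. Once this is established, subtracting $w^2$ leaves $\alpha_0 = 12 + 2\sqrt{13}$, which by the Cohn--Pall part of Theorem \ref{th:quadratic} has length exactly $5$ in $\Z\bigl[\tfrac{1+\sqrt{13}}{2}\bigr]$; together with the $w^2$ summand this forces $\ell(\alpha) \geq 6$. The converse inequality $\ell(\alpha) \leq 6$ is obvious from the very definition $\alpha = \alpha_0 + w^2$.

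To implement the key reduction, I would write each summand as $x_i = \tfrac{1}{N}(a_i + b_i\sqrt{13} + c_i\sqrt{s} + d_i\sqrt{t})$ with $t = 13s$, taking $N = 2$ together with the standard two-parity conditions when $s \equiv 2, 3 \pmod{4}$ (types (B2) and (B3), $q = 13$), and $N = 4$ together with $a_i \equiv b_i \equiv c_i \equiv d_i \pmod{2}$ when $s \equiv 1 \pmod{4}$ (type (B4a)). Comparing absolute traces gives an inequality of the form $\atr\alpha \geq \tfrac{s}{N^2}\sum c_i^2 + \tfrac{13s}{N^2}\sum d_i^2$. A single nonzero $d_i$ contradicts this bound already for every $s > 13$ in the first situation and for every $s > 21$ in the second. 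With all $d_i$ vanishing, the relevant parity condition forces every $c_i$ to be even, so $\sum c_i^2$ is a nonnegative multiple of $4$; a second application of the trace inequality then excludes $\sum c_i^2 \geq 8$ for $s$ large enough, while $\sum c_i^2 = 0$ would place $\alpha$ inside $\Q(\sqrt{13})$. Hence $\sum c_i^2 = 4$, with a unique index $i_0$ satisfying $c_{i_0} = \pm 2$ and all other $c_i, d_i$ zero, so that $x_i \in \O_{\Q(\sqrt{13})}$ for every $i \neq i_0$.

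The final identification of $x_{i_0}$ is a short computation: only the $i_0$-th term contributes to the coefficients of $\sqrt{s}$ and $\sqrt{t}$ in $\sum x_i^2$, and matching them against the known coefficients of $\alpha$ forces $(a_{i_0}, b_{i_0}) = (\pm 2, 0)$, yielding $x_{i_0} = \pm w$ in every residue class.

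The main obstacle is the type-(B4a) case $s \equiv 1 \pmod{4}$: the quarter-integer basis weakens both trace inequalities, and the argument above only applies once $s$ is large enough to make $\sum c_i^2 \geq 8$ impossible. This leaves the fields with $s \in \{17, 21, 29, 33, 37, 41\}$ as small-$s$ exceptions, which are precisely the values tabulated in Lemma \ref{le:comp}(\ref{it:sqrt13P>=6}); a direct computer search verifies $\ell(\alpha) = 6$ for each. For $s \equiv 2, 3 \pmod{4}$ no such exceptions arise beyond those already ruled out by the convention $s > m = 13$.
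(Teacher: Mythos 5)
Your proposal is correct and takes essentially the same route as the paper: the paper's own (very terse) proof likewise applies the trace-comparison template of Propositions \ref{pr:sqrt6} and \ref{pr:sqrt7} to show that $w^2$ must occur as a summand with all remaining summands in $\O_{\Q(\sqrt{13})}$, where $\ell(\alpha_0)=5$ by Theorem \ref{th:quadratic}. You arrive at exactly the same thresholds ($s\leq 13$ for $s\not\equiv 1\pmod 4$ and $s\leq 49$ for $s\equiv 1\pmod 4$) and the same six exceptional fields $s\in\{17,21,29,33,37,41\}$ that the paper delegates to Lemma \ref{le:comp} (\ref{it:sqrt13P>=6}).
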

	\begin{proof}
		The proof is an incarnation of the above explained strategy: Since $\ell(\alpha_0)=5$ in $\O_{\Q(\!\sqrt{13})}$, see Theorem \ref{th:quadratic}, it suffices to show by trace considerations that $w^2$ must be one of the summands and all the others must belong to $\Q(\!\sqrt{13})$. See also the analogous proofs in Subsection \ref{ss:67}.
		
		More specifically: If $s\not\equiv 1$ and we assume a decomposition of $\alpha$ which is not of the form $w^2 + \cdots$ where the remaining summands are squares in $\Q(\!\sqrt{13})$, we arrive at $s \leq 13$, an immediate contradiction. For $s\equiv 1$, the obtained inequality is $s \leq 49$, which left only six fields; these were solved in Lemma \ref{le:comp} (\ref{it:sqrt13P>=6}).
	\end{proof}
	
	Note that the statement of the proposition actually holds even for fields $\BQ{13}{n}$ for $n=6,7,10,11$, see Lemma \ref{le:comp} (\ref{it:sqrt13=s}); the element $\alpha_0 + (1+\sqrt{n})^2$ has length $6$. This is interesting, especially for fields containing $\sqrt{6}$ and $\sqrt{7}$, for which we in general know only $\P(\O_K)\geq 5$; it is further evidence that fields with Pythagoras number less than $6$ are very rare, see Conjecture \ref{co:conjecture}.

	\subsection{The proof} \label{ss:P>=6proof}
	
	%Often one can show that the square of every algebraic integer in $K \setminus F$ has trace larger than $\Tr \alpha_0$; therefore only elements of $\O_F$ can be used in the representation of $\alpha_0$ as a sum of squares. Thence $\alpha_0$ also requires at least $\P(\O_F)$ squares in $\O_K$. (In the biquadratic fields, it mostly suffices to assume $m<s$, as is proven in Subsections \ref{ss:P>=5mEquiv1} and \ref{ss:P>=5mEquiv23}.)
	
	%\begin{proof}[Proof of Theorem \ref{th:P>=6}]
	In this subsection we provide the proof of Theorem \ref{th:P>=6}. The general idea was explained at length in the previous subsection: One starts with $\alpha_0 \in \Q(\!\sqrt{m})$ which has the minimal trace among elements of length $5$, namely $\alpha_0 = 7 + \bigl(\frac{1+\sqrt{m}}{2}\bigr)^2$ if $m\equiv 1 \pmod4$ and $7 + (1+\sqrt{m})^2$ otherwise. Then one takes $w^2 \in \O_K \setminus \Q(\!\sqrt{m})$ which has minimal trace, to define $\alpha = \alpha_0 + w^2$, as the table above Proposition \ref{pr:P>=6witnesses} shows. This $w$ depends not only on the integral basis and on whether $m=q$ or $m \neq q$, but sometimes also on (in)validity of inequalities like $3s > t$.
	
	To solve this last problem, one had to consider all the finitely many decompositions $m = s_0t_0$ for the given $m$. Clearly, if we prove that for each fixed choice of $t_0$ and $s_0$ there are only finitely many $m_0$ such that the biquadratic field $K$ given by $m=t_0s_0, s=t_0m_0, t=s_0m_0$ has $\P(\O_K)\leq 5$, then we are done.
	
	%\end{proof}

	%For a fixed $m$ there are clearly only finitely many choices of $t_0$ and $s_0$ such that $m=t_0s_0$, $t_0<s_0$. Therefore, it suffices to prove the following: \emph{For each pair of square-free coprime numbers $t_0<s_0$ such that $t_0s_0 \neq 2,3,5,6,7,13$, there are only finitely many choices of $m_0>s_0$ such that $\P(\O_K) \leq 5$ for $K=\BQ{t_0s_0}{t_0m_0}$.}

	Since the proofs are really the same in all twelve families, we provide proof only in one of them. We have chosen the most difficult branch where some small additional tricks were needed.
	
	\begin{lemma}[Branch \eqref{it:(2biiA)}] \label{le:oneOf12}
		If the field $K=\BQ{m}{s}$ is of type (B4a), $m \neq 5,13$ and $s_0<3t_0$ holds, and $m_0$ is large enough,\notabene{If we needed a rough estimates, due to congruences we have $3t_0-s_0 \geq 2$, hence $m_0 > \frac{117+5m}{2}$ definitely suffices. If one follows this idea further, one sees that this condition is violated only $O(m)$-times. And for each $m$, one has to consider the number of decompositions $m=s_0t_0$, which is half of the number of divisors of $m$.) $\ldots$ This could lead to showing that not only is there only finitely many exceptions for each $m$, but that they are actually only $O(m\log m)$ or whatever. Interesting, but no time and space for it.} then the following element has length $6$:
		\begin{align*}
			\alpha &= \alpha_0 + w^2 = 7 + \frac{1+m}{4} + \frac{\sqrt{m}}{2} + \Bigl(\frac{1+\sqrt{m}+\sqrt{s}+\sqrt{t}}{4}\Bigr)^2\\
			&= \Bigl(7 + \frac{5+5m+s+t}{16}\Bigr) + \frac{5+m_0}{8}\sqrt{m} + \frac{1+s_0}{8}\sqrt{s} + \frac{1+t_0}{8}\sqrt{t}.
		\end{align*}
	\end{lemma}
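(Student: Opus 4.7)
The plan follows the strategy articulated in Subsection~\ref{ss:P>=6idea}: assume $\alpha=\sum x_i^2$ and use trace and parity considerations to show that exactly one summand equals $\pm w$ while every other summand lies in $\O_{\Q(\sqrt m)}$; then Theorem~\ref{th:quadratic} applied to $\alpha_0$ forces the total number of summands to be at least six. Write $x_i=\frac{a_i+b_i\sqrt m+c_i\sqrt s+d_i\sqrt t}{4}$, where the (B4a) integral basis imposes $a_i\equiv b_i\equiv c_i\equiv d_i\pmod 2$. Using the identities $\sqrt{ms}=t_0\sqrt t$, $\sqrt{mt}=s_0\sqrt s$, $\sqrt{st}=m_0\sqrt m$, expanding $\sum x_i^2$ and matching coefficients against
\[
16\alpha=(117+5m+s+t)+(10+2m_0)\sqrt m+(2+2s_0)\sqrt s+(2+2t_0)\sqrt t
\]
yields the trace equation $\sum(a_i^2+mb_i^2+sc_i^2+td_i^2)=117+5m+s+t$ together with
\[
\sum(a_ic_i+s_0b_id_i)=1+s_0,\qquad \sum(a_id_i+t_0b_ic_i)=1+t_0.
\]

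Rewriting the trace equation as $m_0\bigl[t_0(\sum c_i^2-1)+s_0(\sum d_i^2-1)\bigr]\leq 117+5m$, the hypothesis $s_0<3t_0$ together with $m_0$ large enough (depending on $s_0,t_0$) forces $\sum c_i^2\leq 1$ and $\sum d_i^2\leq 1$ after a short case analysis. Since $\alpha$ has nonzero $\sqrt s$ and $\sqrt t$ coefficients, both sums equal $1$. The parity constraint then forbids $c_i\neq 0$ for one index and $d_j\neq 0$ for a different index (the summand with $c_i=\pm 1$ odd would have $d_i=0$ even), so there is a single quarter-type summand $x_i$ with $c_i,d_i\in\{\pm 1\}$, and every other summand $x_k$ is half-type with $c_k=d_k=0$ and therefore belongs to $\O_{\Q(\sqrt m)}$.

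The main obstacle is pinning down $x_i$. Setting $c_i=\epsilon_c$, $d_i=\epsilon_d\in\{\pm 1\}$, the two displayed equations reduce to a $2\times 2$ linear system in $(a_i,b_i)$ of determinant $t_0-s_0\neq 0$. The case $\epsilon_c\epsilon_d=+1$ solves to $a_i=b_i=\epsilon_c$, giving $x_i=\pm w$ as desired. When $\epsilon_c\epsilon_d=-1$ one computes $b_i=\pm(s_0+t_0+2)/(s_0-t_0)$; here the key step is to exploit that (B4a) forces all three of $s_0,t_0,m_0$ to be $\equiv 1\pmod 4$, so writing $s_0=4j_s+1$, $t_0=4j_t+1$ gives $b_i=\pm(j_s+j_t+1)/(j_s-j_t)$. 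Integrality of $b_i$ forces $(j_s-j_t)\mid 2j_t+1$, hence $j_s-j_t$ is odd; this in turn makes $j_s+j_t$ odd as well, so the numerator $j_s+j_t+1$ is even while the denominator is odd and $b_i$ comes out even, contradicting the oddness required for $x_i$ to be quarter-type. Only $x_i=\pm w$ therefore survives.

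Once $x_i=\pm w$ is established, $\alpha-w^2=\alpha_0=7+\bigl(\frac{1+\sqrt m}{2}\bigr)^2$ is represented as $\sum_{k\neq i}x_k^2$ with each $x_k\in\O_{\Q(\sqrt m)}$. Since $m\equiv 1\pmod 4$ and $m\notin\{5,13\}$, Theorem~\ref{th:quadratic} ensures that $\alpha_0$ cannot be written as a sum of fewer than five squares in $\O_{\Q(\sqrt m)}$, so at least five summands $x_k$ are required and $\ell(\alpha)\geq 6$. The matching upper bound is immediate from the defining decomposition $\alpha=w^2+\alpha_0$ together with $\ell(\alpha_0)=5$.
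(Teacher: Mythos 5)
Your argument is correct and follows the paper's proof essentially step for step: the same trace bound, the same reduction to a single summand with $|c_i|=|d_i|=1$ while all others lie in $\O_{\Q(\sqrt m)}$, the same $2\times 2$ system with determinant $t_0-s_0$, and the same parity obstruction (via $s_0\equiv t_0\equiv 1\pmod 4$) eliminating the sign pattern $\epsilon_c\epsilon_d=-1$. One point to make explicit in your ``short case analysis'': the trace inequality $t_0\sum c_i^2+s_0\sum d_i^2\leq t_0+s_0$ alone does not exclude $\sum c_i^2\in\{2,3\}$ with all $d_i=0$ (since $t_0<s_0$); you also need the parity constraint there, which forces all $c_i$ to be even when every $d_i$ vanishes, hence $\sum c_i^2\geq 4$ and $4t_0\leq t_0+s_0$, contradicting $s_0<3t_0$ --- exactly the dichotomy the paper spells out before comparing traces.
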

	\begin{remark}
		In this family, the condition is $m_0 > \frac{117+5m}{3t_0-s_0}$, as one sees from the proof.
	\end{remark}
	\begin{proof}
		Suppose $\alpha = \sum x_i^2$. Since $\alpha_0$ cannot be written as a sum of less than five squares in $\O_{\Q(\!\sqrt{m})}$, it suffices to prove that one of the $x_i^2$ must be equal to $w^2$ and all other $x_i \in \Q(\!\sqrt{m})$.
		
		Write $x_i = \frac{a_i+b_i\sqrt{m}+c_i\sqrt{s}+d_i\sqrt{t}}{4}$; since $K$ is of type (B4a), we see that $a_i,b_i,c_i,d_i$ are either all even or all odd, and $a_i+b_i+c_i+d_i \equiv 0 \pmod4$. Since $\alpha\notin \Q(\!\sqrt{m})$, at least one of the summands is not in $\Q(\!\sqrt{m})$, hence at least one of all the coefficients $c_i,d_i$ is nonzero. Due to the parity condition, either $\max\{\sum c_i^2,\sum d_i^2\} \geq 4$, or $\sum c_i^2 \geq 1$ and $\sum d_i^2 \geq 1$.
		
		Comparing traces of $\alpha$ and $\sum x_i^2$, one gets
		\[
		7 + \frac{5+5m+s+t}{16} = \frac1{16}\Bigl(\sum a_i^2 + m\sum b_i^2 + s\sum c_i^2 + t\sum d_i^2\Bigr) \geq \frac{1}{16}\Bigl(s\sum c_i^2 + t\sum d_i^2\Bigr),
		\]
		which can be rewritten as
		\[
		\frac{117+5m}{16} + m_0\frac{t_0+s_0}{16} \geq m_0 \frac{1}{16}(t_0\sum c_i^2 + s_0\sum d_i^2).
		\]
		If $(t_0\sum c_i^2 + s_0\sum d_i^2)>t_0 + s_0$, then by choosing $m_0$ large enough we obtain a contradiction. Therefore the expression must be at most $t_0+s_0$. Hence $\sum c_i^2 \geq 4$ or $\sum d_i^2 \geq 4$ is impossible because of our assumption on $s_0$ and $t_0$, since then $(t_0\sum c_i^2 + s_0\sum d_i^2) \geq 4t_0 > t_0 + s_0$. Thus necessarily $\sum c_i^2 = \sum d_i^2 = 1$. Therefore we may assume $|c_1|=|d_1|=1$ and $c_i,d_i=0$ otherwise.
		
		We have just proven $x_i\in\Q(\!\sqrt{m})$ for $i\neq 1$. Since $(-x_1)^2 = x_1^2$, we may assume $c_1=1$; hence there are two possibilities: Either $x_1 = \frac{a_1+b_1\sqrt{m}+\sqrt{s}+\sqrt{t}}{4}$, or $x_1 = \frac{a_1+b_1\sqrt{m}+\sqrt{s}-\sqrt{t}}{4}$. In both cases, we will use the fact that $\alpha - x_1^2 \in \Q(\!\sqrt{m})$ to determine the values of $a_1$ and $b_1$.
		
		In the first case,
		\[
		x_1^2 = \Bigl(\frac{a_1+b_1\sqrt{m}+\sqrt{s}+\sqrt{t}}{4}\Bigr)^2 \in \Q(\!\sqrt{m}) + \frac{a_1+b_1s_0}{8}\sqrt{s} + \frac{a_1+b_1t_0}{8}\sqrt{t};
		\]
		by comparing the coefficients in front of $\sqrt{s}$ and $\sqrt{t}$ with those of $\alpha$, we obtain two equations $a_1+b_1s_0=1+s_0$ and $a_1+b_1t_0=1+t_0$. Since $t_0\neq s_0$, there is a unique solution: $a_1=b_1=1$. Therefore we indeed arrived at $x_1 = \frac{1+\sqrt{m}+\sqrt{s}+\sqrt{t}}{4}$, which is what we needed.
		
		It remains to be shown that the other case is impossible. Again we write
		\[
		x_1^2 = \Bigl(\frac{a_1+b_1\sqrt{m}+\sqrt{s}-\sqrt{t}}{4}\Bigr)^2 \in \Q(\!\sqrt{m}) + \frac{a_1-b_1s_0}{8}\sqrt{s} + \frac{-a_1+b_1t_0}{8}\sqrt{t};
		\]
		this time we obtain the equations $a_1-b_1s_0=1+s_0$ and $-a_1+b_1t_0=1+t_0$. Again, the solution is unique: $a_1 = \frac{t_0+s_0+2s_0t_0}{t_0-s_0}$, $b_1 = \frac{t_0+s_0+2}{t_0-s_0}$. However, in order that $x_1 \in \O_K$, it is necessary that $a_1$ and $b_1$ are odd integers. But it is easy to check that if $s_0\equiv t_0 \equiv 1 \pmod4$, which are the conditions of (B4a), then $b_1 = \frac{t_0+s_0+2}{t_0-s_0} = 1 + 2\frac{s_0+1}{t_0-s_0}$ is either even, or not an integer. This concludes the proof.
	\end{proof}
	
	\subsection{A conjecture} \label{ss:conjecture}
	We suspect that it is in fact possible to prove a stronger result, listed already in the Introduction as the first part of Conjecture \ref{co:conjecture}. However, we did not try proving it in order not to make this paper longer than it already is. Moreover, the potential proof would not only have many branches, but probably some of them would require new ideas. As the last part of this paper, let us repeat the Conjecture \ref{co:conjecture} and list some evidence towards it:
	
	\begin{conjecture*}
		Let $K$ be a totally real biquadratic field.
		\begin{enumerate}
			\item If $K$ contains none of $\sqrt2$ and $\sqrt5$, then $\P(\O_K) \geq 6$ holds with finitely many exceptions.
			\item If $K$ contains $\sqrt2$ or $\sqrt5$, then $\P(\O_K) \leq 5$.
			\item The inequality $\P(\O_K)<5$ holds precisely for the following seven fields:\\ For $K = \BQ23, \BQ25, \BQ35$, where $\P(\O_K)=3$, and\\ for $K =\BQ27, \BQ37, \BQ56, \BQ57$, where $\P(\O_K)=4$.
		\end{enumerate}
	\end{conjecture*}
	Part (\ref{it:con2}) was already proven for $\sqrt5$; for $\sqrt2$, it is an observation based on computer calculations: Our program never found an element with length $6$ or $7$. Of course we do not have a proof for even one concrete field, let alone for all extensions of $F = \Q(\!\sqrt2)$. We suggest that in analogy with Theorem \ref{th:sasaki}, one should try to prove $g_{\O_{\Q(\sqrt2)}}(2)=5$. This seems feasible, since the crucial point in the proof of Lemma \ref{le:sasaki}, namely that the genus of $I_4$ contains only one class, remains valid for $\Q(\!\sqrt2)$.\footnote{This strategy of the proof was successfully carried out by He and Hu in \cite{HH} before the present paper was published. Thus, part (2) of the Conjecture in now in fact a theorem.}
	
	\smallskip
	
	Let us turn to (\ref{it:con3}). The fact that these seven are the only possible exceptional fields is proven as Theorem \ref{th:main2parts} (\ref{it:main(1)}), and an element of the suspected maximal length in each of them was already given in Proposition \ref{pr:comp_examples}. Thus, it only remains to prove a stronger upper bound for these seven fields. Proposition \ref{pr:comp_examples} also contains some numerical evidence -- namely, the conjecture is valid for elements with trace up to $500$.
	
	A further point in favour of this part of the conjecture is that it holds locally -- by the result \cite{Sch2}, Kapitel 0, Lemma 1, already mentioned in the Introduction, four integral squares are sufficient in all completions $K_{\mathfrak{p}}$, and three squares suffice unless $\mathfrak{p}$ is dyadic and $[K_{\mathfrak{p}}:\Q_2]$ is odd. Since biquadratic fields are Galois extensions of $\Q$ of even degree, one can show that such a problematic dyadic place can exist only for fields of type (B4), where $2$ does not ramify. None of the seven listed fields are of this type.
	
	\smallskip
	
	Regarding part (\ref{it:con1}), again it is based on computer experiments. There would be two separate tasks in proving it. First, one has to handle the cases not included in Theorem \ref{th:P>=6}, namely fields containing $\sqrt3$, $\sqrt6$ or $\sqrt7$. Here we observed from our data that even for quite small values of $s$, the corresponding $\O_K$ usually contained an element of length $6$. (For instance, for $K \ni \sqrt3$, Proposition \ref{pr:sqrt3shock} suggests that $\P(\O_K)\geq 6$ holds for all $s\geq 26$ as well as for $s=17$ and $s=22$, leaving only nine exceptions.) To prove this, one would either have to find in the computed data a family of such elements (as in Subsection \ref{ss:235} for length $5$ and $K \ni \sqrt2, \sqrt3$ or $\sqrt5$); or alternatively, if no such family could be found, one could prove an analogy of Proposition \ref{pr:sqrt5GeneralStrongAsymptotics} and construct a family of counterexamples in this way.
	
	And the second task would be to prove a stronger version of Theorem \ref{th:P>=6}: If $m$ is chosen large enough, then there are no exceptional fields. It would be ideal to show that the corresponding $\alpha = \alpha_0 + w^2$ \emph{always} requires six squares. However, this is not true; at least some infinite families of exceptional fields \emph{can} be found: $s=m+4$, $s=m+8$ and $s=m+12$. Possibly they can be handled similarly as in Subsection \ref{ss:m+4,m+8,m+12}. However, it is clear that solving all cases would be a tedious task requiring both patience and clever ideas.
	
	A further indication that the inequality $\P(\O_K)\geq 6$ indeed holds for all but finitely many fields (excluding $m=2$ or $5$) is that it is almost true in the case of $m,s$ coprime (see Proposition \ref{pr:coprime}). As a final piece of evidence we list a result which was shown to us by M.\ Tinková (including a proof) and will be part of an upcoming paper of hers. Since it contains no phrase \enquote{up to finitely many exceptions}, it is a stronger version of Theorem \ref{th:P>=6} for two of the twelve families, namely those coded by \eqref{it:(A)} and \eqref{it:1aiiA}, as well as an alternative for a third family, \eqref{it:1aiiB}.
	
	\begin{theorem}[Tinková]\label{th:MagdaNotCoprime}
		Let $K=\BQ{p}{q}$ where $p,q$ are square-free positive integers such that $p\equiv2$ and $q\equiv3 \pmod4$. Moreover, let $q_0> r_0\geq 3$ and $p_0 > 3r_0$. Then $\P(\O_K)\geq 6$.
		
		In particular, $\alpha=7+(1+\sqrt{p})^2+(1+\sqrt{q})^2$ has length $6$.
	\end{theorem}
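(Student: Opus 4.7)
The plan is to adapt the five-step strategy of Proposition \ref{pr:B1coprime}. Writing $\alpha=\sum x_i^2$ with $x_i=a_i+b_i\sqrt{p}+c_i\sqrt{q}+d_i\frac{\sqrt{p}+\sqrt{r}}{2}$ in the (B1) integral basis, expansion in the $\Q$-basis $(1,\sqrt{p},\sqrt{q},\sqrt{r})$ yields four equations analogous to \eqref{eq:B1c:1}--\eqref{eq:B1c:4}, except that the cross terms now carry factors $r_0,q_0,p_0$ coming from $\sqrt{p}\sqrt{q}=r_0\sqrt{r}$, $\sqrt{p}\sqrt{r}=q_0\sqrt{q}$ and $\sqrt{q}\sqrt{r}=p_0\sqrt{p}$. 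The hypotheses are chosen exactly to keep the old argument alive: $p\equiv 2$, $q\equiv 3\pmod 4$ together with $q_0>r_0$ force $r_0,p_0$ odd and $q_0\equiv 2\pmod 4$ with $q_0\geq 6$, while $p_0>3r_0\geq 9$ yields $p_0\geq 11$, $p\geq 18$, $q\geq 33$, and crucially $r>3p$. The upper bound $\ell(\alpha)\leq 6$ is immediate from $7=2^2+1^2+1^2+1^2$, so everything reduces to proving $\ell(\alpha)\geq 6$.

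I would reproduce the five steps (i)--(v) of \ref{pr:B1coprime} in the same order. For \textbf{(i)}, if all $b_i=0$, subtracting the $\sqrt{r}$ equation from the $\sqrt{p}$ equation gives $(p_0-r_0)\sum c_id_i=2$, impossible since $p_0-r_0>2r_0\geq 6$; if all $c_i=0$, the $\sqrt{q}$ equation becomes $q_0\cdot(\text{integer})=4$, impossible since $q_0\geq 6$. For \textbf{(iii)--(v)}, once step (ii) is settled the equations collapse to exactly those of the coprime case (the $\sqrt{r}$ equation reads $r_0\sum b_ic_i=0$, still forcing $\sum b_ic_i=0$), and the trace bound with $p\geq 18$, $q\geq 33$ forces $\sum b_i^2=\sum c_i^2=1$ at different indices; this pins down $x_1=\pm(1+\sqrt{p})$, $x_2=\pm(1+\sqrt{q})$ and leaves rational-integer summands totalling $7$, needing at least four more squares.

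The main obstacle is step \textbf{(ii)}, the vanishing of all $d_i$. A trace estimate using $r>3p$ rules out any even $|d_{i_0}|\geq 2$: it would give $r/4\leq 9+p$ and hence $p<9/2$. For odd $d_{i_0}$ the half-integer $(b_{i_0}+d_{i_0}/2)^2\geq 1/4$ together with $r>3p$ limits the number of nonzero $d_i$ to at most one. The killing blow is then a parity argument on the $\sqrt{q}$ equation: writing $q_0=2k_0$ with $k_0$ odd, the contribution $q_0\bigl(d_{i_0}^2/2+b_{i_0}d_{i_0}\bigr)=k_0d_{i_0}^2+2k_0b_{i_0}d_{i_0}$ is odd whenever $d_{i_0}$ is odd, yet the right-hand side $2-2\sum a_ic_i$ is even. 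This parity trick, resting on the identity $q_0\equiv 2\pmod 4$ forced by $p\equiv 2\pmod 4$ and $r_0$ odd, is the one genuinely new ingredient beyond the coprime argument.
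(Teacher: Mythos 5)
The paper contains no proof of this theorem: it is attributed to M.~Tinkov\'a and explicitly deferred to a forthcoming paper of hers, so there is no in-paper argument to compare yours against. Judged on its own, your proof is correct and is the natural non-coprime extension of Proposition~\ref{pr:B1coprime}. The four coefficient equations are right (the cross terms acquire the factors $p_0,q_0,r_0$ from $\sqrt{p}\sqrt{q}=r_0\sqrt{r}$, $\sqrt{p}\sqrt{r}=q_0\sqrt{q}$, $\sqrt{q}\sqrt{r}=p_0\sqrt{p}$), the numerology $q_0\equiv 2\pmod 4$, $q_0\geq 6$, $p_0\geq 11$, $p\geq 18$, $q\geq 33$, $r>3p$ all follows from the hypotheses, and steps (i) and (iii)--(v) then go through as in the coprime case. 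You correctly isolate step (ii) as the only genuine obstacle: in Proposition~\ref{pr:B1coprime} the trace estimate kills every nonzero $d_i$ because $r=pq$ is enormous, whereas here $r=p_0q_0$ is only guaranteed to exceed $3p$. Your replacement -- trace estimates to reduce to a single odd $d_{i_0}=\pm1$, then the observation that $\frac{q_0}{2}\sum d_i^2$ is odd while every other term of the $\sqrt{q}$-equation is even -- is exactly the right new ingredient, and it is what the congruences $p\equiv2$, $q\equiv3\pmod4$ (forcing $q_0/2$ odd) are there for. One small slip: when $|d_{i_0}|\geq 2$ the $r$-term of the trace equation contributes at least $r$, not $r/4$, so the inequality you want is $r\leq 9+p$, which combined with $r>3p$ gives $p<9/2$; as literally written, $r/4\leq 9+p$ yields no contradiction. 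The intended estimate is clear and the conclusion stands.
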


	%\section{Non-maximal orders} \label{se:nonmax}
	%It was put into a separate paper in the end.

	\section*{Acknowledgements}
	
	We thank Pavlo Yatsyna and V{\'\i}t{\v e}zslav Kala for suggesting such a rich and interesting topic and for all their support throughout this project. We are very grateful to Magdal{\'e}na Tinkov{\'a} who shared a lot of her experience and significantly contributed to the contents of Section \ref{se:coprime}. It also helped us that Rudolf Scharlau kindly provided us with a printed version of his dissertation. We also acknowledge the friendly reaction of both Rudolf Scharlau and Hideyo Sasaki to our e-mail, and the helpful remarks of the anonymous referee. Last but not least, Barbora Hudcov{\'a} deserves our gratitude for her help with the Japanese language.
	
	J.\ K.\ and E.\ S.\ were supported by project PRIMUS/20/SCI/002 from Charles University. J.\ K.\ was further partially supported by Czech Science Foundation GA\v{C}R, grant 21-00420M, by projects UNCE/SCI/022 and GA UK No.\ 742120 from Charles University, and by SVV-2020-260589.

	\addtocontents{toc}{~\vspace{2em}{}\par} %A dirty trick which enables to fit the ToC into just one page
	
\end{document}